\newtheorem{definition}{Definition}[section]
\newtheorem{theorem}{Theorem}[section]
\newtheorem{lemma}[theorem]{Lemma}
\newtheorem{corollary}[theorem]{Corollary}
\newtheorem{proposition}{Proposition}[section]
\theoremstyle{remark}
\newtheorem{remark}[theorem]{Remark}
\numberwithin{equation}{section}
\theoremstyle{plain}
\newtheorem{maintheorem}{Theorem}
\newcommand{\R}{\ensuremath{\mathbb{R}}}
\newcommand{\Z}{\ensuremath{\mathbb{Z}}}
\newcommand{\C}{\ensuremath{\mathbb{C}}}
\newcommand{\nt}{\ensuremath{\mathbb{N}}}
\newcommand{\crit}{\operatorname{Crit}}
\newcommand{\Id}{\operatorname{Id}}
\begin{document}

\title[Renormalization of bicritical circle maps]{Renormalization of bicritical circle maps}

\author{Gabriela Estevez}
\address{Instituto de Matemática, Universidade Federal do Rio de Janeiro}
\curraddr{Av. Athos da Silveira Ramos 149, CEP 21945-909. Rio de Janeiro, RJ, Brazil}
\email{gaestevezja@gmail.com}

\author{Pablo Guarino}
\address{Instituto de Matem\'atica e Estat\'istica, Universidade Federal Fluminense}
\curraddr{Rua Prof. Marcos Waldemar de Freitas Reis, S/N, 24.210-201, Bloco H, Campus do Gragoat\'a, Niter\'oi, Rio de Janeiro RJ, Brasil}
\email{pablo\_\,guarino@id.uff.br}

\subjclass[2010]{Primary 37E10; Secondary 37E20}

\keywords{Renormalization, Rigidity, Multicritical circle maps}

\thanks{G.E. was partially financed by the Coordenação de Aperfeiçoamento de Pessoal de Nível Superior - Brasil (CAPES) - Finance Code 001. P.G. was partially supported by Conselho Nacional de Desenvolvimento Cient\'ifico e Tecnol\'ogico (CNPq) and by Coordena\c{c}\~ao de Aperfei\c{c}oamento de Pessoal de N\'ivel Superior - Brasil (CAPES) grant 23038.009189/2013-05.}

\begin{abstract} A general \emph{ansatz} in Renormalization Theory, already established in many important situations, states that exponential convergence of renormalization orbits implies that topological conjugacies are actually smooth (when restricted to the attractors of the original systems). In this paper we establish this principle for a large class of \emph{bicritical circle maps}, which are $C^3$ circle homeomorphisms with irrational rotation number and exactly two (non-flat) critical points. The proof presented here is an adaptation, to the bicritical setting, of the one given by de Faria and de Melo in \cite{dFdM} for the case of a single critical point. When combined with the recent papers \cite{ESY,Yam2019}, our main theorem implies $C^{1+\alpha}$ rigidity for real-analytic bicritical circle maps with rotation number of \emph{bounded type} (Corollary \ref{maincoro}).
\end{abstract}

\maketitle

\section{Introduction}

In the present paper we study the dynamics of \emph{multicritical circle maps}, which are $C^3$ circle homeomorphisms having finitely many critical points (all of which are non-flat, see Definition \ref{def:multicritic}). By a fundamental result due to J.-C. Yoccoz \cite{yoccoz}, two multicritical circle maps $f$ and $g$ with the same irrational rotation number are topologically conjugate to the corresponding rigid rotation, and in particular to each other. To obtain a smooth conjugacy between $f$ and $g$, we need to assume the existence of a topological conjugacy $h$ which identifies its critical sets, while preserving corresponding criticalities (this is a \emph{finite codimension} condition, see Definition \ref{signature}).

In this paper we restrict our attention to the bicritical case, and we prove that, for Lebesgue almost every rotation number, such conjugacy $h$ is a $C^{1+\alpha}$ diffeomorphism, provided the successive renormalizations of $f$ and $g$ (around critical points identified under $h$) converge together exponentially fast in the $C^1$ topology (Theorem \ref{maintheorem}). The full Lebesgue measure set of rotation numbers considered here was introduced by de Faria and de Melo in the nineties \cite{dFdM}, and contains all numbers of bounded type (see Definition \ref{setA}). As already mentioned in the abstract, the proof presented here is an adaptation, to the bicritical setting, of the one given in \cite{dFdM} for the case of a single critical point.

As an application, we combine Theorem \ref{maintheorem} with the recent papers \cite{ESY,Yam2019} to obtain $C^{1+\alpha}$ rigidity for real-analytic bicritical circle maps with bounded combinatorics (Corollary~\ref{maincoro}).

\subsection{Main result} Let $f$ be a $C^3$ multicritical circle map with irrational rotation number $\rho\in(0,1)$ and $N \geq 1$ critical points $c_i$, for $0\leq i\leq N-1$ (which are labeled as ordered in the unit circle). All critical points are assumed to be \emph{non-flat}: in $C^3$ local coordinates around $c_i$, the map $f$ can be written as\, $t \mapsto t\,|t|^{d_i-1}$ for some $d_i>1$ (we say that $d_i$ is the \emph{criticality} of $f$ around $c_i$, see Definition \ref{naoflat} below). Being topologically conjugate to an irrational rotation, $f$ is uniquely ergodic; we denote its unique invariant Borel probability measure by $\mu_f$.

\begin{definition}\label{signature} We define the \emph{signature} of $f$ to be the $(2N+2)$-tuple 
\[
(\rho\,;N;\,d_0,d_1,\ldots,d_{N-1};\,\delta_0,\delta_1,\ldots,\delta_{N-1}),
\]
where  $d_i$ is the criticality of the critical point $c_i$, and $\delta_i=\mu_f[c_i,c_{i+1})$ (with the convention that $c_{N}=c_0$).
\end{definition}

Now consider two $C^3$ multicritical circle maps, say $f$ and $g$, with the same irrational rotation number. By Yoccoz theorem \cite{yoccoz}, they are topologically conjugate to each other. By elementary reasons, if $f$ and $g$ have the same signature there exists a circle homeomorphism $h$, which is a topological conjugacy between $f$ and $g$, identifying each critical point of $f$ with one of $g$ having the same criticality (note that such $h$ is the unique conjugacy between $f$ and $g$ that can be smooth. As it turns out, for almost every rotation number most conjugacies between $f$ and $g$ fail to be a \emph{quasisymmetric} homeomorphism, see the recent paper \cite{dFG19} for precise statements). This will be our standing assumption in this article. In particular, a critical point of $f$ and one of $g$ are said to be \emph{corresponding} critical points, if they are identified under such conjugacy $h$. Our main result is the following.

\begin{maintheorem}\label{maintheorem} There exists a full Lebesgue measure set $\mathcal{A}\subset(0,1)$ of irrational numbers with the following property. Let $f$ and $g$ be $C^3$ bicritical circle maps with the same signature and such that its common rotation number belongs to the set $\mathcal{A}$. If the renormalizations of $f$ and $g$ around corresponding critical points converge together exponentially fast in the $C^1$ topology, then $f$ and $g$ are conjugate to each other by a $C^{1+\alpha}$ diffeomorphism, for some $\alpha>0$.
\end{maintheorem}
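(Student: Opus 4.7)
The plan is to adapt the de Faria--de Melo scheme from \cite{dFdM} to the bicritical case. Let $c$ be a critical point of $f$ and $c'$ the corresponding critical point of $g$, and let $h$ denote the (unique) topological conjugacy between $f$ and $g$ that sends $c$ to $c'$ and respects the signature (its existence follows from Yoccoz's theorem). First, for each $n\geq 0$ I would construct the dynamical partition $\mathcal{P}^f_n$ of the circle around $c$, whose atoms are consecutive iterates of two fundamental intervals adjacent to $c$, and the parallel partition $\mathcal{P}^g_n$ around $c'$; the combinatorial equivalence provided by the common signature shows that $h$ sends atoms of $\mathcal{P}^f_n$ to the corresponding atoms of $\mathcal{P}^g_n$. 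Invoking the real bounds available for bicritical circle maps, these partitions enjoy \emph{bounded geometry}: adjacent atoms are comparable in length with constants depending only on the signature.

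The second step is to convert the $C^1$-exponential convergence of renormalizations into a quantitative estimate on the ratios $|h(I)|/|I|$ as $I$ varies over atoms of $\mathcal{P}^f_n$. Because the $n$-th renormalizations of $f$ and $g$ are exponentially close in the $C^1$ topology and, by the real bounds, have uniformly bounded geometry, a Koebe-type distortion argument combined with a telescoping decomposition of $h$ on each atom should force the ratios $|h(I)|/|I|$ to fluctuate by at most $O(\lambda^n)$ on the $n$-th scale, for some $\lambda\in(0,1)$. Summing this geometric series will give, for every $x$ in the orbit of $c$, a well-defined positive limit $h'(x)=\lim |h(I)|/|I|$ taken over nested atoms containing $x$, and will show that this limit is $\alpha$-H\"older along the post-critical set, with exponent $\alpha>0$ controlled by $\lambda$ and the minimal ratio in the real bounds. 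A standard density argument then extends $h'$ to a positive H\"older function on the whole circle, establishing that $h$ is a $C^{1+\alpha}$ diffeomorphism.

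The main obstacle I anticipate is the simultaneous handling of the two critical points. In the monocritical case all scales are controlled by the renormalization around the unique critical point, but in the bicritical setting the atoms of $\mathcal{P}^f_n$ eventually sweep across a neighborhood of the second critical point $c_1$, where the relevant asymptotic geometry is governed by the renormalization around $c_1$ rather than around $c_0$. Incorporating the exponential convergence of renormalizations around \emph{both} corresponding pairs of critical points into the telescoping step, uniformly in $n$, will require carefully matching the return times generated by the two critical points on overlapping scales. This is precisely where the arithmetic condition defining the set $\mathcal{A}$ enters: $\mathcal{A}$ must be chosen (following the approach of \cite{dFdM}) so that the partial quotients of $\rho$ grow slowly enough to keep the scales produced by both critical points compatible, while still retaining full Lebesgue measure in $(0,1)$. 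Verifying that the set $\mathcal{A}$ from the monocritical case still works in the bicritical setting, and that the telescoping argument absorbs the error contributed by each critical point, is the technical heart of the proof.
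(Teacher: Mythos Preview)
Your outline captures the broad de Faria--de Melo strategy, but it misses the two specific technical devices that make the argument work, and without them the proof will not close.

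First, the standard dynamical partitions $\mathcal{P}_n^f$ around a critical point do \emph{not} form a fine grid unless the rotation number is of bounded type: an atom of $\mathcal{P}_n^f$ breaks into roughly $a_{n+1}$ atoms of $\mathcal{P}_{n+1}^f$, and for generic $\rho\in\mathcal{A}$ the partial quotients $a_n$ are unbounded. Bounded geometry of \emph{adjacent} atoms (which is what the real bounds give you) is not enough; you also need a uniform bound on the number of children of each atom, and this forces you to replace $\mathcal{P}_n^f$ by a genuinely different sequence of partitions. In the paper this is done in two stages: one first builds \emph{auxiliary partitions} $\widehat{\mathcal{P}}_n$ and then extracts from them a \emph{fine grid} $\{\mathcal{Q}_n\}$ by performing balanced decompositions of the long ``bridges'' (unions of many fundamental domains of $f^{q_{n+1}}$), using Yoccoz's lemma on almost parabolic maps. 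The smoothness criterion (Proposition~\ref{criterion}) is then stated for ratios $|I|/|J|$ of \emph{adjacent} atoms of $\mathcal{Q}_n$, not for $|h(I)|/|I|$ over atoms of $\mathcal{P}_n^f$; your density argument for $h'$ would require controlling such ratios across an unbounded number of sub-atoms, and the error does not telescope.

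Second, the handling of the second critical point is not a matter of ``matching return times'' in the telescoping step; it must be built into the partition structure itself. At levels $n$ where the free critical point $\mathfrak{c}_n$ of the return map $f^{q_{n+1}}|_{I_n(c_0)}$ sits deep inside the long arc (a ``two-bridges level''), the bridge has to be split at $\mathfrak{c}_n$ into two sub-bridges, each of which is an honest almost parabolic map with negative Schwarzian, so that Yoccoz's lemma applies to each piece separately. If you do not insert $\mathfrak{c}_n$ as a vertex, the map $f^{q_{n+1}}$ restricted to the bridge has a critical point in its interior, Yoccoz's estimate fails, and you lose the comparability constants needed for the fine grid. This is also where the $C^1$ (rather than $C^0$) closeness of renormalizations is used: to control $|B_{m,f}(\mathfrak{c}_m^f)-B_{m,g}(\mathfrak{c}_m^g)|$ one compares the \emph{derivatives} of the renormalized pairs (see Remark~\ref{remdistcrit}), and the resulting bound involves a $1/d$ power of the $C^1$ distance. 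Your sketch does not identify this mechanism, and without it the key estimate $\|h-\mathrm{Id}\|_{C^0(J_n^f(c_0))}\le C|J_n^f(c_0)|\mu_1^n$ cannot be established at two-bridges levels.
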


As mentioned in the abstract, the idea that exponential convergence of renormalization implies smoothness of topological conjugacies, when restricted to the attractors of the original systems, is a cornerstone in Renormalization Theory. As a fundamental example, see \cite[Section VI.9]{dMvS} for the case of unimodal maps with bounded combinatorics (more specifically, see Theorem 9.4). In the case of critical circle maps with a single critical point, this principle has been established by de Faria and de Melo in \cite[First Main Theorem]{dFdM} for rotation numbers in the set $\mathcal{A}$, and extended by Khanin and Teplinsky in \cite{KT} to cover  all irrational rotation numbers (see Theorem 2 in page 198 for the specific statement). Both proofs are given for the case of a single critical point, and our goal in the present paper is to adapt the previous arguments to the case of two critical points.

We would like to remark that Theorem \ref{maintheorem} is most likely true for circle maps with \emph{any} number of critical points, see Remark \ref{renbimulti} at the end of the present paper. On the other hand, it is definitely not possible to extend its statement to cover \emph{all} irrational rotation numbers: in \cite{Av}, Avila was able to construct topologically conjugate real-analytic critical circle maps (with a single critical point) which are \emph{not} $C^{1+\alpha}$ conjugate to each other, for any $\alpha>0$, although the corresponding renormalization orbits converge together exponentially fast (in the $C^r$ metric, for any $r \geq 1$). We remark that an analogue statement, in the $C^{\infty}$ class, was previously obtained in \cite[Section 5]{dFdM}. However, $C^1$ rigidity may hold for multicritical circle maps with the same signature, just as in the case of a single critical point. Indeed, any two $C^3$ circle homeomorphisms with the same irrational rotation number of bounded type and with a single critical point (of the same odd integer criticality) are conjugate to each other by a $C^{1+\alpha}$ circle diffeomorphism, for some universal $\alpha>0$ (see \cite{GdM}). Moreover, any two $C^4$ circle homeomorphisms with the same irrational rotation number and with a unique critical point (again, of the same odd criticality), are conjugate to each other by a $C^1$ diffeomorphism (see \cite{GMdM}). This conjugacy is in fact a $C^{1+\alpha}$ diffeomorphism, provided the common rotation number belongs to the full Lebesgue measure set $\mathcal{A}$ (again, see \cite{GMdM}).

\subsection{Rigidity of real-analytic bicritical circle maps with bounded combinatorics}\label{seccoroBT} Let $f$ and $g$ be real-analytic bicritical circle maps with both critical points of cubic type and with the same signature (recall, from Definition \ref{signature}, that this amounts to say that $f$ and $g$ have the same irrational rotation number while the relative positions of its two critical points, viewed with the corresponding unique invariant measure, coincide). If the common rotation number of $f$ and $g$ is of \emph{bounded type}, it follows from the recent papers \cite{ESY,Yam2019} that the successive renormalizations of $f$ and $g$, around corresponding critical points, converge together exponentially fast in the $C^r$ topology, for any $r\in\nt$. Applying Theorem \ref{maintheorem}, we obtain the following result, announced in the abstract and in the introduction.

\begin{corollary}\label{maincoro} Let $f$ and $g$ be real-analytic bicritical circle maps with the same signature, and with both critical points of cubic type. If their common rotation number is of bounded type, then $f$ and $g$ are conjugate to each other by a $C^{1+\alpha}$ diffeomorphism for some $\alpha>0$.
\end{corollary}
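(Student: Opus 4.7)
The plan is to apply Theorem \ref{maintheorem} as a black box, after checking that each of its hypotheses is satisfied under the assumptions of the corollary and then importing the exponential renormalization convergence from the cited references.

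First I would verify the ``soft'' hypotheses. Real-analytic maps are in particular $C^3$, so $f$ and $g$ are $C^3$ bicritical circle maps in the sense required by Theorem \ref{maintheorem}. The same-signature assumption is explicit in the corollary's statement; in particular, as explained in the paragraph following Definition \ref{signature}, it yields a distinguished topological conjugacy $h$ identifying critical points with the same criticality, so that the notion of \emph{corresponding} critical points is well defined in our bicritical setting (both critical points are cubic, so the criticality labels are trivially matched, but the signature condition is still needed to identify \emph{which} critical point of $f$ is sent to \emph{which} critical point of $g$ via the unique conjugacy that has any chance of being smooth). Finally, since every irrational of bounded type lies in the full Lebesgue measure set $\mathcal{A}$ of Definition \ref{setA}, the rotation-number hypothesis of Theorem \ref{maintheorem} is also fulfilled.

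The one substantive input is the exponential $C^1$-convergence of renormalizations. This is where the papers \cite{ESY, Yam2019} come in: Yampolsky's work \cite{Yam2019} supplies complex a priori bounds and the appropriate renormalization framework for analytic bicritical commuting pairs with bounded combinatorics, and \cite{ESY} upgrades this to exponential contraction of the renormalization operator acting on the relevant space of pairs. Because both $f$ and $g$ are real-analytic with the same bounded-type rotation number and the same signature, their successive renormalizations around corresponding critical points lie in the same hyperbolic basin, and the contraction statement translates into exponential convergence together in $C^r$ for every $r\in\nt$, hence in particular in the $C^1$ topology required by Theorem \ref{maintheorem}.

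With all hypotheses in place, Theorem \ref{maintheorem} yields the existence of a $C^{1+\alpha}$ diffeomorphism conjugating $f$ and $g$, which is exactly the conclusion of the corollary. The main obstacle in writing this argument carefully is bookkeeping: one has to check that the pair of critical points around which \cite{ESY, Yam2019} prove renormalization convergence coincides with the pair matched by the distinguished conjugacy $h$ (otherwise the renormalizations of $f$ and $g$ could converge together exponentially fast around a \emph{wrong} pairing while failing to do so around the correct one). The signature hypothesis is precisely what makes this matching canonical, so no additional work is needed beyond tracking definitions through the cited papers.
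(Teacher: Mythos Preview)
Your proposal is correct and matches the paper's own argument essentially verbatim: verify that bounded type rotation numbers lie in $\mathcal{A}$, import from \cite{ESY,Yam2019} the exponential $C^r$ (hence $C^1$) convergence of renormalizations around corresponding critical points for real-analytic bicritical maps with cubic criticalities and bounded combinatorics, and then invoke Theorem~\ref{maintheorem}. The extra bookkeeping you flag about matching the critical-point pairing is handled, as you say, by the signature hypothesis, so nothing further is needed.
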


\subsection{Strategy of the proof of Theorem \ref{maintheorem}}\label{secsketch} Let $f$ and $g$ be two $C^3$ bicritical circle maps with the same irrational rotation number. As explained in the introduction, a result of Yoccoz \cite{yoccoz} implies that $f$ and $g$ are topologically conjugate to each other. Moreover, assuming that $f$ and $g$ have the same signature is equivalent to assume that there exists a circle homeomorphism $h$ which is a topological conjugacy between $f$ and $g$, identifying each critical point of $f$ with one of $g$ having the same criticality.

Our main goal in this paper is to prove that such homeomorphism $h$ is actually a smooth diffeomorphism. Since one-dimensional affine maps are characterized by the fact that they preserve ratios between lengths of intervals, we would like to show that $h$ almost preserves such ratios, provided we consider very small intervals, which are very close to each other. To achieve this, we will first construct (say, for the given $f$) a suitable sequence of partitions (called \emph{fine grid}, see Definition \ref{def:finegrid}) whose vertices will be dynamically extracted from the critical set of $f$. Essentially, this is a combinatorial construction, to be performed in Section~\ref{sec:anewpartition}.

After fine grids are built, it will be enough to control ratios of lengths of corresponding elements of those fine grids, in order to assure that $h$ is indeed a $C^{1+\alpha}$ diffeomorphism (see Proposition \ref{criterion}). This criterion was used by de Faria and de Melo in \cite{dFdM} to obtain smoothness (see \cite[Section 4.2]{dFdM}), and it is the one that will be used here too. Let us point out that the fine grids constructed in \cite{dFdM} are not suitable for the case of more than one critical point. As already mentioned, in Section \ref{sec:anewpartition} we will construct fine grids adapted to the bicritical case, which is the main difference between the proof given here and the one given by de Faria and de Melo for the case of a single critical point.

Our main task in this paper, therefore, is to prove that, for Lebesgue almost every rotation number, $C^1$ exponential contraction of renormalization (which is the main assumption of Theorem \ref{maintheorem}) implies the local behaviour for $h$ required by Proposition \ref{criterion}. This is accomplished in Section \ref{SecProofThmB}.

We finish this introduction by pointing out that to prove exponential contraction for the renormalization operator of multicritical circle maps is a challenging problem. In the case of a single critical point and real-analytic dynamics, exponential contraction was obtained in \cite{dFdM2} for rotation numbers of bounded type, and extended in \cite{KY} to cover all irrational rotation numbers. Both papers lean on complex dynamics techniques, and therefore an additional hypothesis is required: the criticality at both critical points has to be an odd integer (note that this condition is also needed for the rigidity results discussed after the statement of Theorem \ref{maintheorem}). These results have been recently extended in at least two directions: in \cite{GY} exponential contraction is obtained allowing non-integer criticalities which are close enough to an odd integer, while in \cite{GdM} and \cite{GMdM} exponential contraction is established for finitely smooth critical circle maps (still with odd integer criticalities). Finally, in the case of two critical points, it was recently proved in \cite{Yam2019} both existence of periodic orbits and hyperbolicity of those periodic orbits, for real-analytic bicritical circle maps (with both critical points of cubic type). These results were later extended to bounded combinatorics in \cite{ESY}, from where we deduce Corollary \ref{maincoro} (as already explained in Section \ref{seccoroBT} above). For much more on the dynamics of multicritical circle maps, we refer the reader to the recent survey \cite{dFGsurvey}.

\subsection*{Brief summary} In the preliminary Section \ref{preliminares} we present the basic facts about multicritical circle maps and renormalization of commuting pairs. In Section \ref{secreduction} we state Theorem \ref{theoremb}, and we explain why it implies Theorem \ref{maintheorem}. The two remaining sections are devoted to the proof of Theorem \ref{theoremb}\,: in Section \ref{sec:anewpartition} we state the announced criterion for smoothness (Proposition \ref{criterion}) and we construct a fine grid for any given bicritical circle map, while in Section \ref{SecProofThmB} we prove Theorem \ref{theoremb} by establishing the assumptions of Proposition \ref{criterion}.

\section{Preliminaries}\label{preliminares}

\subsection{Bicritical circle maps}\label{subseccomb} Let us now define the maps which are the main object of study in the present paper. We start with the notion of {\it non-flat critical point\/}. 

\begin{definition}\label{naoflat} We say that a critical point $c$ of a one-dimensional $C^3$ map $f$ is \emph{non-flat} of criticality $d>1$ if there exists a neighbourhood $W$ of the critical point such that\, $f(x)=f(c)+\phi(x)\,\big|\phi(x)\big|^{d-1}$ for all $x \in W$, where $\phi : W \rightarrow \phi(W)$ is 
an orientation preserving $C^{3}$ diffeomorphism satisfying $\phi(c)=0$.
\end{definition}

\begin{definition}\label{def:multicritic} A \emph{multicritical circle map} is an orientation preserving $C^3$ circle homeomorphism $f$ having 
$N \geq 1$ critical points, all of which are non-flat in the sense of Definition~\ref{naoflat}. If $N=2$, we say that $f$ is a \emph{bicritical circle map}.
\end{definition}

As an example, let $a \in [0,1)$, $N\in\nt$ and consider $\tilde{f}_a:\R\to\R$ given by$$\tilde{f}_a(x)=x+a-\frac{1}{2N \pi}\,\sin (2 N \pi x).$$Since each $\tilde{f}_a$ has degree one and commutes with unitary translation, it is the lift of an orientation preserving real-analytic circle homeomorphism, under the canonical universal cover $x \mapsto e^{2\pi ix}$. Each circle map in this family has exactly $N$ critical points, given by $\big\{e^{\frac{j}{N}2\pi i}:\,j\in\{0,1,...,N-1\}\big\}$, all of them with criticality equal to $3$. Since they lift to entire maps, these real-analytic multicritical circle maps extend holomorphically to the punctured plane $\C\setminus\{0\}$. It is also possible to construct multicritical circle maps whose holomorphic extensions are well defined in the whole Riemann sphere $\widehat{\C}$. Indeed, consider the one-parameter family $f_{\omega}:\widehat{\C}\to\widehat{\C}$ of 
Blaschke products given by$$
f_{\omega}(z)=e^{2\pi i\omega}z^2\left(\frac{z-3}{1-3z}\right)\quad\mbox{for $\omega\in[0,1)$.}$$Every map in this family leaves invariant the unit circle, and restricts to a real-analytic critical circle map with a single critical point at $1$, which is of cubic type, and with critical value $e^{2\pi i\omega}$. Moreover, by monotonicity of the rotation number, for each $\rho\in(0,1)\!\setminus\!\mathbb{Q}$ there exists a unique $\omega$ in $[0,1)$ such that the rotation number of $f_{\omega}|_{S^1}$ equals $\rho$ (see \cite[Section 6]{dFG16} for more details). Now let $p,q \in \mathbb{C}$ with $|p|>1$, $|q|>1$, let $\omega\in[0,1)$ and consider $g_{p,q,\omega}:\widehat{\C}\to\widehat{\C}$ given by
\begin{equation}\label{Zakeri}
g_{p,q,\omega}(z)=e^{2\pi i\omega}z^{3}\left(\frac{z-p}{1-\overline{p}z}\right)\left(\frac{z-q}{1-\overline{q}z} \right).
\end{equation}Just as before, every map in this family leaves invariant the unit circle. The following fact was proved by Zakeri in \cite[Section 7]{zak}.

\begin{theorem} For any given $\rho\in(0,1)\!\setminus\!\mathbb{Q}$ and $\delta\in(0,1)$ there exists a unique $g_{p,q,\omega}$ of the form \eqref{Zakeri} such that $g_{p,q,\omega}|_{S^{1}}$ is a bicritical circle map with signature $(\rho\,;2;3,3;\,\delta,1-\delta)$.
\end{theorem}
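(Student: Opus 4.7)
The plan is to parametrize the Blaschke family by the location of its critical points on $S^1$, show that the induced signature map is continuous on a suitable two-dimensional parameter space, and then obtain existence by a degree/continuity argument and uniqueness via quasiconformal rigidity.

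First I would establish the basic structure of $g := g_{p,q,\omega}|_{S^1}$. For $|p|,|q|>1$, each factor $B_p(z)=(z-p)/(1-\bar{p}z)$ has its pole $1/\bar{p}$ inside the open unit disk and its zero $p$ outside, so by the argument principle it has winding number $-1$ on $S^1$; combined with the $+3$ contribution from $z^3$, the map $g$ has topological degree $1$ on $S^1$. A straightforward logarithmic differentiation, together with the identity $(z-p)(1-\bar{p}z) = z\,|z-p|^2$ valid on $S^1$, yields
\begin{equation*}
\frac{zg'(z)}{g(z)} \;=\; 3 \,-\, \frac{|p|^2-1}{|z-p|^2} \,-\, \frac{|q|^2-1}{|z-q|^2},
\end{equation*}
which is real on $S^1$ and whose zeros there are precisely the critical points of $g|_{S^1}$. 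Since $g$ is a degree-$5$ rational map of $\widehat{\C}$ with order-$2$ critical points at the super-attracting fixed points $0$ and $\infty$, only four of its eight critical points live in $\C^{*}$; requiring exactly two of them to lie on $S^1$ is a real codimension-$2$ condition, and those two are then simple zeros of $g'$, hence of cubic type. After quotienting by the rotational symmetry $(p,q,\omega)\sim(e^{-i\alpha}p,e^{-i\alpha}q,\omega+2\alpha/\pi)$, the admissible locus $\mathcal{B}$ becomes a two-real-dimensional manifold.

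For existence I would study the signature map $\Phi\colon\mathcal{B}\to(\R/\Z)\times(0,1)$, $g\mapsto(\rho(g),\delta(g))$. It is continuous because $\rho$ depends continuously on $g$ in the $C^0$ topology and, on the irrational rotation locus, unique ergodicity makes the invariant measure---and hence $\delta$---depend continuously on $g$. I would first treat the symmetric sub-locus $q=\bar{p}$, $p\in\R$, $p>1$: here $g$ commutes with complex conjugation, forcing $\delta=1/2$, and varying $\omega$ sweeps through all rotation numbers by the standard monotonicity theorem for analytic families of circle maps. To reach arbitrary $\delta\in(0,1)$ I would deform the symmetric case by rotating $p$ and $q$ in opposite directions while adjusting $\omega$ to keep $\rho$ fixed; continuity of $\delta$, combined with the boundary limits ($\delta\to 0$ or $1$ as the two critical points collide), then shows $\delta$ sweeps the entire interval.

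The main obstacle is uniqueness. Two parameter choices with the same signature yield real-analytic bicritical circle maps having the same rotation number and the same $\delta$, hence topologically conjugate by a homeomorphism that identifies critical points with matching criticalities. I would upgrade this conjugacy, via Yoccoz's real-variable \emph{a priori} bounds, to a quasisymmetric conjugacy on $S^1$; then, using the holomorphic extensions of the two maps to annular neighbourhoods of $S^1$ together with the measurable Riemann mapping theorem, to a quasiconformal conjugacy on such an annulus. Since the only non-$S^1$ Fatou activity is the pair of super-attracting basins at $0$ and $\infty$, which are rigid by Böttcher uniformization, the conjugacy extends to a global quasiconformal equivalence between the two degree-$5$ rational maps. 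A direct inspection of the algebraic form~\eqref{Zakeri} then forces this equivalence to be a rotation, whereupon the parameter triples must agree modulo the rotational symmetry. Carrying out this extension step carefully, and in particular ruling out invariant line fields on the Fatou set of $g$, is where I expect the real difficulty to lie.
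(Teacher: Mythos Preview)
The paper does not contain a proof of this theorem: it is quoted verbatim as a result of Zakeri, with the citation ``proved by Zakeri in \cite[Section 7]{zak}'' immediately preceding the statement, and no argument is given. There is therefore no in-paper proof to compare your proposal against.

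For what it is worth, your outline is broadly in the spirit of Zakeri's original argument---he too parametrizes the family by the positions of the two critical points on $S^1$ and proves uniqueness by a quasiconformal rigidity (pull-back) argument. Two small corrections to your sketch: the obstruction to rigidity is invariant line fields on the \emph{Julia} set, not the Fatou set (on the super-attracting basins the Beltrami coefficient is disposed of via B\"ottcher coordinates, as you yourself note); and the existence step requires more than a soft continuity argument---one needs properness of the critical-point parametrization, which Zakeri establishes by a careful boundary analysis of the normalized family.
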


\subsection{Real bounds}\label{secrealbounds} Being a homeomorphism, a multicritical circle map $f$ has a well defined rotation number. We will focus on the case where $f$ has no periodic orbits, which is equivalent to say that it has irrational rotation number $\rho \in [0,1]$. By the already mentioned result of J.-C. Yoccoz \cite{yoccoz}, $f$ has no wandering intervals and in particular it is topologically conjugate with the corresponding rigid rotation.

We consider the continued fraction expansion of $\rho$\,:
\begin{equation*}
\rho= [a_{0} , a_{1} , \cdots ]=\cfrac{1}{a_{0}+\cfrac{1}{a_{1}+\cfrac{1}{ \ddots} }} \ .
\end{equation*}Truncating the expansion at level $n-1$, we obtain the so-called \textit{convergents} of $\rho$\,:$$\frac{p_n}{q_n}\;=\;[a_0,a_1, \cdots ,a_{n-1}]\;=\;\dfrac{1}{a_0+\dfrac{1}{a_1+\dfrac{1}{\ddots\dfrac{1}{a_{n-1}}}}}\ .
$$The sequence of denominators $\{q_n\}_{n\in\nt}$ satisfies the following recursive formula (see for instance \cite[Chapter I, Theorem 1, page 4]{khin}):
\begin{equation*}
q_{0}=1, \hspace{0.4cm} q_{1}=a_{0}, \hspace{0.4cm} q_{n+1}=a_{n}\,q_{n}+q_{n-1} \hspace{0.3cm} \text{for all $n \geq 1$} .
\end{equation*}

As mentioned in the introduction, the set $\mathcal{A}\subset(0,1)$ considered in the statement of Theorem \ref{maintheorem} was introduced by de Faria and de Melo in the nineties \cite[Section 4.4]{dFdM}. Its precise definition is the following.

\begin{definition}\label{setA} Let $\mathcal{A}\subset(0,1)$ be the set of rotation numbers $\rho=[a_0, a_1, \cdots]$ satisfying:
\begin{enumerate}
\item\label{cond1A} $\displaystyle\limsup_{n\to \infty} \frac{1}{n} \sum_{j=0}^{j=n} \log a_j < \infty$\,,
\vspace{.2cm}
\item\label{cond2A} $\displaystyle\lim_{n \to \infty} \frac{1}{n}\, \log a_n=0$\,,
\vspace{.2cm}
\item\label{cond3A} $\displaystyle\frac{1}{n} \sum_{j=k+1}^{k+n} \log a_j \leq \omega\left(\frac{n}{k}\right)$\,,
\end{enumerate}
for all $0 < n \leq k$, where $\omega$ is a monotone function (that depends on the rotation number) such that $\omega (t)>0$ for all $t>0$, and such that $t\,\omega(t) \to 0$ as $t \to 0$.
\end{definition}

The set $\mathcal{A}$ has full Lebesgue measure in $(0,1)$, see \cite[Appendix C]{dFdM} for a proof. Obviously, all bounded type numbers satisfy the three conditions above (recall that $\rho$ is of \textit{bounded type} if $\sup_{n\in\nt}\{a_n\}$ is finite). We would like to remark that all constructions to be performed in Section \ref{sec:anewpartition} can be done for any irrational rotation number: conditions \eqref{cond1A}-\eqref{cond3A} in Definition \ref{setA} will only be considered in Section \ref{SecProofThmB}.

\medskip

Let $f$ a multicritical circle map, $x \in S^1$ and $n \in \nt$. We denote by $I_{n}(x)$ the interval
with endpoints $x$ and $f^{q_n}(x)$, which contains the point $f^{q_{n+2}}(x)$. The collection of intervals
\[
 \mathcal{P}_n(x) \ = \ \left\{ f^{i}(I_n(x)):\;0\leq i\leq q_{n+1}-1 \right\} \;\cup\; 
 \left\{ f^{j}(I_{n+1}(x)):\;0\leq j\leq q_{n}-1 \right\} 
\]is a partition of the circle (modulo endpoints) called the {\it standard $n$-th dynamical partition\/} associated to the point $x$. The following fundamental geometric control was obtained by Herman \cite{H} and Światek \cite{G} in the eighties.
  
\begin{theorem}[The real bounds]\label{teobeau} Given $N \in \nt$ and $d>1$ let $\mathcal{F}_{N,d}$ be the family of multicritical circle maps with at most $N$ critical points whose maximum criticality is bounded by $d$. There exists a constant $C=C(N,d)>1$ with the following property: for any given $f \in \mathcal{F}_{N,d}$ and $c \in \crit(f)$ there exists $n_0 \in \mathbb{N}$ such that for all $n\geq n_0$ and every pair of adjacent intervals $I,J \in \mathcal{P}_{n}(c)$, we have$$ \frac{1}{C} \leq \frac{|I|}{|J|} \leq C\,.$$
\end{theorem}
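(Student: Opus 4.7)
The plan is to adapt the classical arguments of Herman and Świątek from the single-critical setting to the multicritical one, as has been done in several recent works on multicritical circle maps. Two ingredients drive the proof: a cross-ratio distortion inequality for $C^3$ maps with non-flat critical points, and a combinatorial analysis of how orbits of partition intervals interact with the critical set $\crit(f)$.

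First I would record (or borrow from the literature) the cross-ratio inequality for $f \in \mathcal{F}_{N,d}$: there is a universal constant $K=K(N,d)>1$ such that for every interval $T \subset S^1$ and every $k \geq 1$ with $f^k|_T$ a homeomorphism, the cross-ratio distortion of $f^k$ on any $J \subset T$ is bounded by $K$ raised to a power that measures how the iterates $T, f(T), \ldots, f^{k-1}(T)$ encounter neighborhoods of $\crit(f)$, using the non-flatness to control each encounter. The second ingredient is the almost-parabolic estimate of Yoccoz, which handles the non-bounded-distortion contribution coming from a single passage near a critical point by exploiting the power-law form $t \mapsto t|t|^{d_i-1}$.

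With these ingredients in hand, I would argue by induction on $n$. The key combinatorial observation is that the collection $\mathcal{P}_n(c)$ tiles $S^1$ modulo endpoints, so any fixed point of the circle (in particular, any critical point) lies in at most two intervals of $\mathcal{P}_n(c)$. Consequently, during one renormalization period, each orbit of a partition interval visits each critical point at most a uniformly bounded number of times, so the exponent appearing in the cross-ratio inequality is bounded by a function of $N$ alone. This turns $f^{q_n}|_{I_{n+1}(c)}$ and $f^{q_{n+1}}|_{I_n(c)}$ into maps of uniformly bounded cross-ratio distortion once one removes a definite neighborhood of the nearest critical point. For the inductive step, one uses these bounded-distortion maps to transport the comparability of adjacent intervals from level $n$ to level $n+1$, handling the intervals which are pre-images of $I_n(c)$ and $I_{n+1}(c)$ separately, while the portions near critical points are controlled by the Yoccoz almost-parabolic estimate plus non-flatness. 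The base case uses that for some $n_0(f,c)$ large enough (depending on the $C^3$ norm and the modulus of continuity of $f$), the intervals of $\mathcal{P}_{n_0}(c)$ are so short that the cross-ratio inequality becomes effective.

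The main obstacle, compared with the single-critical case treated in \cite{dFdM}, is the bookkeeping of distortion contributions from several critical points along long orbits: one must verify that the bounded-multiplicity cover argument survives when $N \geq 2$, and that the distortion bound depends only on $N$ and $d$ (not on the particular placement of the critical points or on $\rho$). This is settled by the combinatorial remark above together with the uniformity of the non-flatness hypothesis across the family $\mathcal{F}_{N,d}$; all remaining steps are then straightforward adaptations of the classical argument.
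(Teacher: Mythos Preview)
The paper does not supply its own proof of this theorem: it attributes the result to Herman \cite{H} and Świątek \cite{G} and refers the reader to \cite{EdF,EdFG} for a detailed proof in the multicritical setting. Your outline is broadly consistent with the argument carried out in those references: the cross-ratio distortion inequality for $C^3$ maps with non-flat critical points, the bounded-multiplicity observation that each critical point is visited a uniformly bounded number of times during one renormalization cycle, and the use of power-law local form at critical points are indeed the main ingredients. So there is no discrepancy of approach to report; what you have written is a reasonable high-level summary of the standard proof, though of course it remains a sketch rather than a complete argument (in particular, the passage ``one uses these bounded-distortion maps to transport the comparability of adjacent intervals from level $n$ to level $n+1$'' hides the main technical work, and the invocation of the Yoccoz almost-parabolic lemma is not quite how the real bounds are obtained in \cite{EdF,EdFG}---that lemma is a \emph{consequence} of the real bounds plus negative Schwarzian, not an input to them).
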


A detailed proof of Theorem \ref{teobeau} can also be found in \cite{EdF,EdFG}. Given two positive numbers $\alpha$ and $\beta$, we say that $\alpha$ is {\it comparable\/} to $\beta$ if there exists a constant $K>1$ depending only on $C$ (from Theorem \ref{teobeau}) such that $K^{-1}\beta\leq \alpha\leq K\beta$. This relation will be denoted $\alpha\asymp \beta$. We finish Section \ref{secrealbounds} with the following four consequences of the real bounds, that will be useful later.

\begin{remark}\label{remcomp} Let $I\in\mathcal{P}_n(c)$ and let $J$ be an interval such that $I \subseteq J \subseteq I^*$, where $I^*$ denotes the union of $I$ with its left and right neighbours in $\mathcal{P}_n(c)$. Then $|I|\asymp|J|$.
\end{remark}

\begin{corollary}\label{corbeau} Let $f$ be a multicritical circle map and $c \in \crit(f)$. There exists a constant\, $0<\mu<1$ such that for all $n\geq n_0$ the following holds: if $\mathcal{P}_{n+1}(c) \ni I \subsetneq J \in \mathcal{P}_{n}(c)$, then $|I| \leq \mu |J|$.
\end{corollary}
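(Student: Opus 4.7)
The plan is to exploit two ingredients: (a) the partition $\mathcal{P}_{n+1}(c)$ refines $\mathcal{P}_n(c)$, meaning that every element of $\mathcal{P}_n(c)$ decomposes as a disjoint union of consecutive elements of $\mathcal{P}_{n+1}(c)$; and (b) adjacent elements of any dynamical partition are comparable, by Theorem \ref{teobeau}. Combining these, I expect $|I|\leq \mu|J|$ to follow from a one-line pigeonhole-style estimate, with $\mu=C/(C+1)$ (where $C$ is the constant of the real bounds).

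First I would verify the refinement property. The partition $\mathcal{P}_n(c)$ consists of the long intervals $f^i(I_n(c))$ for $0\leq i\leq q_{n+1}-1$ and the short intervals $f^j(I_{n+1}(c))$ for $0\leq j\leq q_n-1$. Since $q_n<q_{n+2}$, every short interval of $\mathcal{P}_n(c)$ is already a long interval of $\mathcal{P}_{n+1}(c)$. Meanwhile, each long interval $f^i(I_n(c))$ is subdivided by the $a_{n+1}$ points $f^{\,i+kq_{n+1}}(c)$, for $1\leq k\leq a_{n+1}$, into $a_{n+1}+1$ consecutive elements of $\mathcal{P}_{n+1}(c)$ (namely, $a_{n+1}$ iterates of $I_{n+1}(c)$ together with one iterate of $I_{n+2}(c)$). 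Hence the refinement property holds.

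Now assume $I\in\mathcal{P}_{n+1}(c)$ is strictly contained in some $J\in\mathcal{P}_n(c)$. By the refinement, $J$ is a union of at least two consecutive elements of $\mathcal{P}_{n+1}(c)$, one of which is $I$. Consequently, there exists another element $I'\in\mathcal{P}_{n+1}(c)$ that is adjacent to $I$ and satisfies $I'\subseteq J$. Applying Theorem \ref{teobeau} to the partition $\mathcal{P}_{n+1}(c)$ (which is legitimate for all $n\geq n_0-1$, possibly after enlarging $n_0$), we obtain $|I'|\geq |I|/C$, and therefore
\[
|J|\;\geq\;|I|+|I'|\;\geq\;\left(1+\frac{1}{C}\right)|I|,
\]
so that $|I|\leq \mu|J|$ with $\mu:=C/(C+1)\in(0,1)$.

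There is no substantial obstacle here: the only thing to check carefully is that, whenever $I\subsetneq J$, one of the two neighbours of $I$ in $\mathcal{P}_{n+1}(c)$ genuinely lies inside $J$, which is immediate from the combinatorial description above. The key input is of course Theorem \ref{teobeau}, which is what forces the adjacent piece $I'$ to carry a definite fraction of $|J|$.
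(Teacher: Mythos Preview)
Your argument is correct and is precisely the ``straightforward from the real bounds'' reasoning the paper alludes to (the paper gives no explicit proof, merely stating that the corollary follows directly from Theorem~\ref{teobeau}). One tiny remark: you do not need to enlarge $n_0$, since for $n\ge n_0$ you are applying the real bounds to $\mathcal{P}_{n+1}(c)$ at level $n+1\ge n_0$, which is already covered.
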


\begin{corollary}[$C^1-$bounds]\label{c1bounds} Let $f$ be a multicritical circle map and $c \in \crit(f)$. There exists a constant $K=K(f)>1$ such that for all $n >n_0$ and $x \in J_n(c)$, we have
\begin{itemize}
\item $Df^{q_{n+1}}(x) \leq K$\,,\, if $x \in I_n(c)$.
\item $Df^{q_{n}}(x) \leq K$\,,\, if $x \in I_{n+1}(c)$.
\end{itemize}
\end{corollary}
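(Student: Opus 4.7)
The plan is to derive the $C^1$ bound from the real bounds by a classical two-step argument: first identify an ``average'' derivative using the mean value theorem and the real bounds, and then upgrade to a pointwise bound via a distortion estimate. By symmetry between the two cases (swapping the roles of $n$ and $n+1$), it suffices to establish the first bound, so I would focus on $x \in I_n(c)$ and show $Df^{q_{n+1}}(x) \leq K$.

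First, by the mean value theorem applied to $f^{q_{n+1}}$ on $I_n(c)$, there exists $\xi \in I_n(c)$ such that
\[
Df^{q_{n+1}}(\xi) \;=\; \frac{\bigl|f^{q_{n+1}}(I_n(c))\bigr|}{|I_n(c)|}.
\]
The image $f^{q_{n+1}}(I_n(c))$ has endpoints $f^{q_{n+1}}(c)$ and $f^{q_n+q_{n+1}}(c)$; both lie in a definite neighbourhood of $c$ consisting of $O(1)$ adjacent atoms of the partition $\mathcal{P}_n(c)$. Theorem~\ref{teobeau} together with Remark~\ref{remcomp} then gives $|f^{q_{n+1}}(I_n(c))| \asymp |I_n(c)|$, so $Df^{q_{n+1}}(\xi) \leq K_0$ for some $K_0 = K_0(f)$.

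Second, I would upgrade this average bound to a pointwise estimate at an arbitrary $x \in I_n(c)$ by controlling the distortion of $f^{q_{n+1}}$ on $I_n(c)$. The key structural input is that the family
\[
\bigl\{\,f^{j}(I_n(c)):\;0 \leq j \leq q_{n+1}-1\,\bigr\}
\]
is contained in the standard dynamical partition $\mathcal{P}_n(c)$, hence consists of pairwise disjoint intervals whose lengths sum to at most $1$. For $C^3$ one-dimensional maps with non-flat critical points, the Schwarzian/cross-ratio inequality then yields uniformly bounded distortion of the composition $f^{q_{n+1}}$ along this disjoint chain; in particular $Df^{q_{n+1}}(x)/Df^{q_{n+1}}(\xi) \leq K_1$ uniformly in $n$, and the desired bound $K = K_0 K_1$ follows.

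The main obstacle is precisely the distortion step. Because $c$ is itself a critical point of $f$, and because in the multicritical case the orbit of $I_n(c)$ may traverse other critical points of $f$ before time $q_{n+1}$, the composition $f^{q_{n+1}}$ is not a local diffeomorphism on $I_n(c)$, so the classical Koebe lemma for diffeomorphisms does not apply verbatim. The remedy is to invoke non-flatness (Definition~\ref{naoflat}) to linearise the finitely many critical passages via the coordinate $\phi$, absorb the resulting power-law factors using the real bounds (which force the critical passages to take place at scales comparable to $|I_n(c)|$), and apply the cross-ratio distortion estimate on the remaining diffeomorphic pieces. This is where the $C^3$ hypothesis and the non-flat assumption on the critical points are used in an essential way.
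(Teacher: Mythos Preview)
The paper does not prove this corollary in the text; it simply refers the reader to \cite[Lemma~3.1]{EdFG}. Your outline is the standard route taken there: a mean-value step to bound the macroscopic ratio $|f^{q_{n+1}}(I_n(c))|/|I_n(c)|$ using the real bounds, followed by a distortion argument to upgrade to a pointwise bound, with the critical passages handled separately via non-flatness.

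One point is worth stating more carefully. In your second paragraph you claim that the Schwarzian/cross-ratio inequality yields ``uniformly bounded distortion of the composition $f^{q_{n+1}}$'' on $I_n(c)$, and hence $Df^{q_{n+1}}(x)/Df^{q_{n+1}}(\xi)\le K_1$. Taken literally this is false: $c$ is an endpoint of $I_n(c)$ and $Df^{q_{n+1}}(c)=0$, so the two-sided distortion on $I_n(c)$ is infinite, and cross-ratio expansion by itself gives \emph{lower} bounds on derivatives rather than upper ones. Your third paragraph already contains the correct repair, so the proposal is internally consistent, but the actual mechanism should be phrased as a direct upper bound rather than a distortion comparison with $\xi$: factor $f^{q_{n+1}}$ through the (finitely many) critical passages; on each intervening diffeomorphic block apply Koebe with the space supplied by Theorem~\ref{teobeau}, obtaining a two-sided bound on the derivative in terms of the block's input/output length ratio; at each critical passage use $Df(y)\asymp |y-c_i|^{d_i-1}$ together with the real bounds to pin down the relevant scale. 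The large expansion accumulated on the diffeomorphic blocks then exactly compensates the small derivative at the critical steps, and the product is uniformly bounded.
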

 
We say that two adjacent intervals $I$ and $J$ are \textit{symmetric} if their extreme points are $f^{-q_n}(x),x,f^{q_n}(x)$ for some $x \in S^1$ and $n \in \nt$.
 
\begin{corollary}\label{symmetricintervals} Any two adjacent symmetric intervals are comparable to each other.
\end{corollary}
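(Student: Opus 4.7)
The plan is to prove $|I| \asymp |J|$ by exploiting that $J = f^{q_{n}}(I)$: being orientation-preserving, $f^{q_{n}}$ sends the endpoints $f^{-q_{n}}(x), x$ of $I$ to $x, f^{q_{n}}(x)$, whence $J = f^{q_{n}}(I)$. By the mean value theorem $|J|/|I| = Df^{q_{n}}(\xi)$ for some $\xi \in I$, so the task reduces to bounding $Df^{q_{n}}$ uniformly above and below on $I$, with constants depending only on the real-bounds constant from Theorem \ref{teobeau}.

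First, I would fix a critical point $c$ of $f$ and use the real bounds to argue that the iterates $f^{i}(I) = [f^{i-q_{n}}(x), f^{i}(x)]$, for $0 \leq i < q_{n}$, each have length comparable to a nearby element of $\mathcal{P}_{n}(c)$, and that this collection of $q_{n}$ intervals has bounded overlap multiplicity on $S^{1}$. The bounded-overlap property is essentially a combinatorial fact: via Yoccoz's conjugacy with the rigid rotation, these are $q_{n}$ translates of a single interval of combinatorial length $|q_{n}\rho - p_{n}| \asymp 1/q_{n+1}$, whose total length is at most $1$ (one may also appeal to the three-distance theorem here).

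Next, I would apply the cross-ratio (Koebe-type) distortion inequality for $C^{3}$ maps with non-flat critical points. Combined with the bounded-overlap property of the previous step, this yields $\max_{I} Df^{q_{n}}/\min_{I} Df^{q_{n}} \leq C_{1}$, i.e.\ bounded distortion of $f^{q_{n}}$ on $I$. To promote this to a pointwise bound, I would invoke Corollary \ref{c1bounds}, which furnishes a point $y_{0}$ in a nearby $I_{n+1}(c)$-type interval where $Df^{q_{n}}(y_{0}) \leq K$; by Remark \ref{remcomp} and the just-established bounded distortion, this transports to $Df^{q_{n}} \leq K'$ on all of $I$, giving $|J|/|I| \leq K'$. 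The matching lower bound is obtained by running the same argument for the inverse diffeomorphism $f^{-q_{n}}\colon J \to I$, which is controlled by the same real bounds applied to the multicritical circle map $f^{-1}$ (with the same criticalities and rotation number $-\rho$).

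The main obstacle I expect is the cross-ratio step: controlling the distortion of $f^{q_{n}}$ on $I$ when some of the iterates $f^{i}(I)$ pass close to a critical point of $f$. The non-flatness hypothesis, together with the real bounds ensuring that the orbit of $I$ does not concentrate near any single critical point, is exactly what makes the derivative sums controlling distortion remain uniformly bounded in $n$; making this estimate rigorous in the multicritical setting is the technical heart of the argument and the main thing to verify carefully.
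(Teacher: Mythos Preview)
The paper does not prove this corollary itself but defers to \cite[Lemma~3.3]{EdF}, whose argument is the standard cross-ratio estimate. Your outline has the right ingredients---bounded intersection multiplicity of the orbit $\{f^{i}(I)\}_{0\le i<q_n}$ and the cross-ratio inequality for $C^{3}$ maps with non-flat critical points---but the conclusion you draw from them is wrong: cross-ratio control does \emph{not} yield $\max_{I}Df^{q_n}/\min_{I}Df^{q_n}\le C_{1}$. The point $x\in S^{1}$ is arbitrary, so $I=[f^{-q_n}(x),x]$ may well contain a critical point of $f^{q_n}$ (namely some $f^{-j}(c)$ with $0\le j<q_n$), in which case $\min_{I}Df^{q_n}=0$ and no derivative-distortion bound can hold. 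The Koebe principle you invoke requires $f^{q_n}$ to be a diffeomorphism on an enlargement of $I$, which cannot be arranged here. What the cross-ratio inequality \emph{does} give is a direct comparison of lengths: with a configuration such as $L=I$, $M=J$, $R=[f^{q_n}(x),f^{2q_n}(x)]$, bounded cross-ratio distortion of $f^{q_n}$ on $T=L\cup M\cup R$ (valid even through critical points, since $t\mapsto t|t|^{d-1}$ has negative Schwarzian) combined with the real bounds on the image forces $|I|\asymp|J|$ without ever bounding $Df^{q_n}$ pointwise.

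Two further problems. First, Corollary~\ref{c1bounds} bounds $Df^{q_n}$ only for points in $I_{n+1}(c)\subset J_n(c)$, not on an arbitrary interval sitting near some atom of $\mathcal{P}_n(c)$; Remark~\ref{remcomp} gives length comparability of intervals, not transport of derivative bounds, so there is no way to move the bound from $y_0$ to your $I$. Second, the symmetry argument via $f^{-1}$ fails as written: $f^{-1}$ is \emph{not} a multicritical circle map in the sense of Definition~\ref{def:multicritic}---near each critical value it has a root-type singularity $y\mapsto \operatorname{sgn}(y-f(c))\,|y-f(c)|^{1/d}$ with infinite (not vanishing) derivative---so Theorem~\ref{teobeau} does not apply to it.
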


Both Remark \ref{remcomp} and Corollary \ref{corbeau} follow straightforward from the real bounds. For a proof of Corollary \ref{c1bounds} see \cite[Lemma 3.1]{EdFG}, and for a proof of Corollary \ref{symmetricintervals} see \cite[Lemma 3.3]{EdF}.

\subsection{Multicritical commuting pairs}\label{secdefmulti} In this section, we introduce the notion of \textit{multicritical commuting pairs}, a natural generalization of the already classical notion of critical commuting pairs.

\begin{definition}\label{multcommpairs} A $C^r$ multicritical commuting pair ($r \geq 3$) with $N=N_1+N_2-1$ critical points is a pair $\zeta=(\eta,\xi)$ consisting of two $C^r$ orientation preserving homeomorphisms $\xi:I_\xi \rightarrow  \xi(I_\xi)$ and $\eta : I_\eta \rightarrow \eta(I_{\eta})$ with a finite number of non-flat critical points $\gamma_0, \cdots, \gamma_{N_1-1}=0$ and $\beta_{0}=0, \beta_1, \cdots, \beta_{N_2-1}$, respectively, satisfying:
 \begin{enumerate}
  \item $I_{\xi}=[\eta(0), 0]$ and $I_{\eta}=[0, \xi(0)]$ are compact intervals in the real line;
  \item $(\eta \circ \xi)(0)=(\xi \circ \eta)(0) \neq 0$;
  \item $D\xi(x)>0$ for all $\gamma_i < x< \gamma_{i+1}$, $i \in \{0, 1, \cdots, N_1 -1\}$ and $D\eta(x)>0$ for all
  $\beta_j < x <\beta_{j+1}-1$,  $j \in \{ 0, 1, \cdots, N_2-1 \}$;
  \item The origin has the same criticality for $\eta$ than for $\xi$;
  \item For each $1 \leq k \leq r$, we have that $D^{k}_{-}(\xi \circ \eta)(0)=D^{k}_{+}(\eta \circ \xi)(0)$, where $D_{-}^k$ and $D_{+}^k$ represent the $k$-th left and right derivative, respectively.
 \end{enumerate}
\end{definition}

\begin{figure}[ht]
 \centering
  \psfrag{n}[][]{$\eta$} 
  \psfrag{e}[][]{$\xi$}
 \psfrag{0}[][]{$0$}
  \includegraphics[width=2.3in]{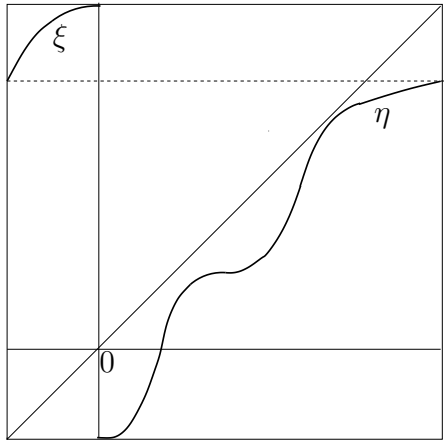}
  \caption{A bicritical commuting pair $\zeta=(\eta,\xi)$.}
\end{figure}

Let $\zeta_1=(\eta_{1}, \xi_{1})$ and $\zeta_2=(\eta_2, \xi_2)$ be two $C^r$ multicritical commuting pairs, and let
$\tau_1: [\eta_1(0), \xi_1(0)] \rightarrow [-1,1]$ and $\tau_2: [\eta_2(0), \xi_2(0)] \rightarrow [-1,1]$ be the two M\"obius transformations given by$$ \tau_{i}(\eta_{i}(0))=-1, \ \ \tau_{i}(0)=0 \ \text{ and } \ \tau_{i}(\xi_{i}(0))=1,\quad\mbox{for each $i \in \{1,2\}$.}$$

\begin{definition}\label{pseudometric} For any given $0 \leq k \leq r$ we define the $C^k$ distance between $\zeta_1$ and $\zeta_2$ as
\begin{equation*}
d_{k}(\zeta_1,\zeta_2)= \max \left\lbrace \left| \dfrac{\eta_1(0)}{\xi_1(0)} - \dfrac{\eta_2(0)}{\xi_2(0)} \right| ,  \
\|\tau_{1}\circ \zeta_1 \circ \tau_1^{-1} - \tau_2 \circ \zeta_2 \circ \tau_{2}^{-1} \|_{k} \right\rbrace
\end{equation*}
where $\| \cdot \|_{k}$ denotes the $C^{k}$ norm for maps in the interval $[-1,1]$ with a discontinuity at the origin.
\end{definition}

Note that $d_{k}(,)$ is not a distance but a pseudo-distance, since it is invariant under conjugacies with homothetias. In order to have a distance we restrict our attention to \emph{normalized} pairs: for any given pair $\zeta=(\eta,\xi)$ we denote by $\widetilde{\zeta}$ the pair $(\widetilde{\eta}|_{\widetilde{I_{\eta}}}, \widetilde{\xi}|_{\widetilde{I_{\xi}}})$, where tilde means linear rescaling by the factor $1/|I_{\eta}|$. In other words, $|\widetilde{I_{\eta}}|=1$ and $\widetilde{I_{\xi}}$ has length equal to the ratio between the lengths of $I_{\xi}$ and $I_{\eta}$. Equivalently, $\widetilde{\xi}(0)=1$ and $\widetilde{\eta}(0)=-|I_{\xi}|/|I_{\eta}|=\eta(0)/\xi(0)$.

\subsection{Renormalization of multicritical commuting pairs}\label{rmcm}

\begin{definition} We define the \emph{period} of the pair $\zeta=(\eta, \xi)$ as the natural number $a$ such that$$ \eta^{a+1}(\xi(0)) <0\leq \eta^{a}(\xi(0)),$$when such number exists, and we denote it by $\chi(\zeta)$. If such $a$ does not exist, we just define $\chi(\zeta)=\infty$.
\end{definition}
 
\begin{definition}\label{defren} Let $\zeta=(\eta, \xi)$ be a multicritical commuting pair with $(\xi \circ \eta)(0) \in I_{\eta}$ and $\chi(\zeta)=a< \infty$. We define the \emph{renormalization} of $\zeta$ as the normalization of the pair $(\eta|_{[0, \eta^{a}(\xi(0)) ]} \ , \ \eta^{a}\circ \xi|_{I_{\xi}} )$, that is:$$\mathcal{R}(\zeta)= \left(\widetilde{\eta}|_{[0,\widetilde{\eta^{a}(\xi(0))} ]} \ , \ \widetilde{\eta^{a}\circ \xi}|_{\widetilde{I}_{\xi}}
  \right).$$
\end{definition}
 
If $\zeta$ is a multicritical commuting pair with $\chi(\mathcal{R}^{j}\zeta)< \infty$ for $0 \leq j \leq n-1$, we say that $\zeta$ is
 \textit{$n$-times renormalizable}, and if $\chi(\mathcal{R}^{j}\zeta)< \infty$  for all $j \in \nt$, we say that $\zeta$ is \textit{infinitely renormalizable}. In the last case, we define the \textit{rotation number} of $\zeta$ as the irrational number whose continued fraction expansion is given by$$[\chi(\zeta), \chi(\mathcal{R}\zeta), \cdots, \chi(\mathcal{R}^{n}\zeta), \cdots ].$$
  
Now let $f$ be a $C^r$ multicritical circle map with irrational rotation number $\rho$ and $N$ critical points $c_{0}, \dots, c_{N-1}$. For each critical point $c_i$, $f$ induces a sequence of multicritical commuting pairs in the following way: let $\widehat{f}$ be the lift of $f$ (under the universal covering $t \mapsto c_i\cdot\exp(2\pi i t)$) such that $0< \widehat{f}(0)<1$ (and note that $D\widehat{f}(0)=0$). For $n\geq 1$, let $\widehat{I}_{n}(c_i)$ be the closed interval in $\R$, containing the origin as one of its extreme points, which is projected onto $I_{n}(c_i)$. We define $\xi: \widehat{I}_{n+1}(c_i) \rightarrow \R$ and $\eta: \widehat{I}_{n}(c_i) \rightarrow \R$ by $\xi= T^{-p_{n}}\circ \widehat{f}^{q_{n}}$ and $\eta= T^{-p_{n+1}}\circ \widehat{f}^{q_{n+1}}$, where $T$ is the unitary translation $T(x)=x+1$. Then the pair $(\eta|_{\widehat{I}_{n}(c_i)}, \xi|_{\widehat{I}_{n+1}(c_i)})$ is an infinitely renormalizable multicritical commuting pair, that we denote by $(f^{q_{n+1}}|_{I_n(c_i)}, f^{q_n}|_{I_{n+1}(c_i)})$. Its normalization will be denoted by $\mathcal{R}_i^{n}f$, that is:$$\mathcal{R}_i^{n}f=\left(\widetilde{f}^{q_{n+1}}|_{\widetilde{I_n}(c_i)}, \widetilde{f}^{q_{n}}|_{\widetilde{I_{n+1}}(c_i)}\right).$$
  
\begin{figure}[!ht]
 \centering
\psfrag{F}[][]{$f^{q_{n+1}}$}
\psfrag{G}[][]{$ \ \ \ f^{q_n}$}
\psfrag{H}[][]{$ \ \ f^{q_{n+2}}$}
\includegraphics[width=2.6in]{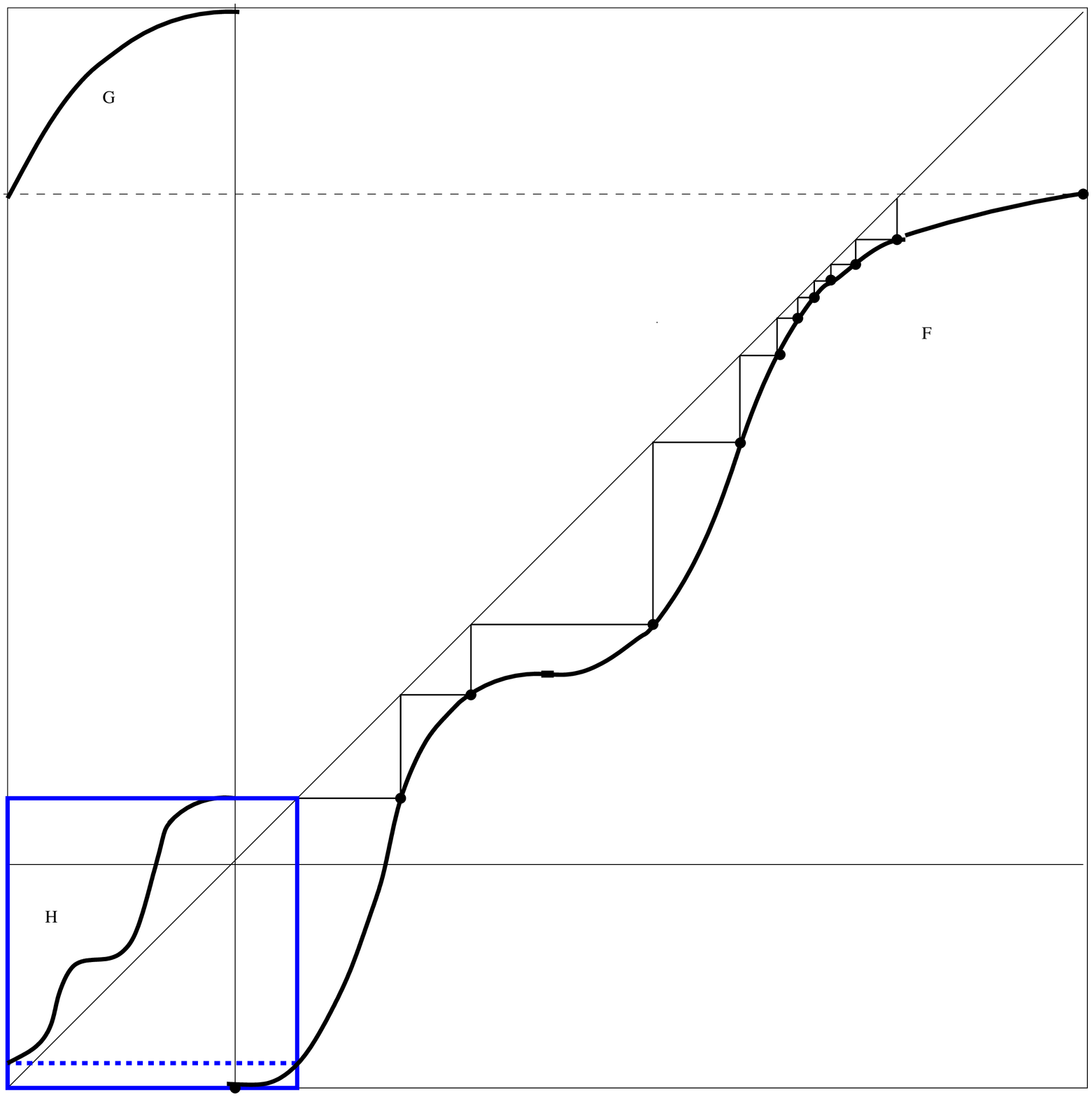}
\caption{The $n-$th and $(n+1)-$th renormalization of $f$, before rescaling.}
\end{figure}

\section{A reduction of Theorem \ref{maintheorem}}\label{secreduction} In this section we reduce our main result, namely Theorem \ref{maintheorem}, to Theorem \ref{theoremb} below, which is slightly easier to prove. Right after its statement, we explain why Theorem \ref{theoremb} implies Theorem \ref{maintheorem}.

\begin{maintheorem}\label{theoremb} Let $f$ and $g$ be $C^3$ bicritical circle maps with the same irrational rotation number in the set $\mathcal{A}$. Suppose that both $f$ and $g$ have the same signature and exactly the same critical set (in other words, there exists a topological conjugacy $h$ fixing each critical point). Assume, finally, that there exist $C>1$ and $0<\mu<1$ such that for each $c_i \in \crit(f)$ we have$$\left| \frac{|I_{n}^g(c_i)|}{|I_{n}^f(c_i)|}- 1 \right| \leq C\, \mu^{n} \quad\mbox{and}\quad d_{1}(\mathcal{R}_i^nf, \mathcal{R}_i^ng) \leq C\, \mu^{n}.$$

Then $h$ is a $C^{1+\alpha}$ diffeomorphism for some $\alpha>0$.
\end{maintheorem}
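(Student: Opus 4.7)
The plan is to follow the architecture sketched in Section \ref{secsketch}: reduce smoothness of $h$ to verifying the smoothness criterion, Proposition \ref{criterion}, for a carefully chosen pair of partitions, called the \emph{fine grids} of $f$ and of $g$. For each bicritical map I would first invoke the combinatorial construction carried out in Section \ref{sec:anewpartition} to produce a nested sequence of partitions $\mathcal{G}_n(f)$ whose vertices are orbit points of the two critical points $c_0,c_1$, and whose mesh decays exponentially with $n$ by the real bounds (Theorem \ref{teobeau}) and Corollary \ref{corbeau}. Since $h$ fixes each critical point and conjugates $f$ to $g$, it sends vertices of $\mathcal{G}_n(f)$ bijectively to vertices of the analogous $\mathcal{G}_n(g)$, so for each $I\in\mathcal{G}_n(f)$ the image $h(I)$ is the element of $\mathcal{G}_n(g)$ with the same combinatorial label. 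Hence the problem reduces to proving an estimate of the form
\[
\left|\frac{|h(I)|}{|I|}-1\right| \;\leq\; K\,\lambda^{n}\qquad\text{for every }I\in\mathcal{G}_n(f),
\]
for some $K>0$ and $\lambda\in(0,1)$, which is exactly what Proposition \ref{criterion} requires in order to conclude that $h$ is a $C^{1+\alpha}$ diffeomorphism.

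To establish that ratio estimate, I would write every element of $\mathcal{G}_n(f)$ as $f^{k}(I_m^f(c_i))$ for some critical point $c_i$, some level $m\in\{n,n+1\}$, and some $0\leq k< q_{m+1}$ (and similarly for $g$ with the same $k,m,i$). Then I would decompose
\[
\frac{|g^{k}(I_m^g(c_i))|}{|f^{k}(I_m^f(c_i))|} \;=\; \frac{|g^{k}(I_m^g(c_i))|}{|g^{k}(I_m^f(c_i))|}\cdot\frac{|g^{k}(I_m^f(c_i))|}{|f^{k}(I_m^f(c_i))|}
\]
and bound each factor separately. The first factor compares the $g^{k}$-images of two intervals that share the critical endpoint $c_i$; by the real bounds and the $C^{1}$ control of Corollary \ref{c1bounds}, the map $g^{k}$ has uniformly bounded distortion on the relevant scale, so this factor differs from $|I_m^g(c_i)|/|I_m^f(c_i)|$ by a factor of $1+O(\mu^n)$ and is thus controlled by the first hypothesis of Theorem \ref{theoremb}. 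The second factor compares corresponding iterates of $f$ and $g$; here the $C^{1}$ closeness $d_{1}(\mathcal{R}_i^n f,\mathcal{R}_i^n g)\leq C\mu^n$ enters, since after rescaling the normalized $(q_{n+1},q_n)$-iterates of $f$ and $g$ agree in $C^1$ up to an exponentially small error. A telescoping argument, combined with the chain rule applied in each fundamental domain of the $n$-th dynamical partition, then transfers this closeness from the rescaled renormalizations to the actual iterates $f^{k}$ and $g^{k}$ for all $k$ in the required range.

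The main technical obstacle, as in \cite{dFdM}, is to keep the accumulated multiplicative error under control as $k$ ranges over $\{0,\dots,q_{n+1}-1\}$, and more generally as one runs the telescoping argument across several renormalization levels. Each level contributes a distortion factor of order $a_n$ coming from the $a_n$ fundamental domains inside $\mathcal{P}_{n}(c_i)$, so a naive estimate accumulates a factor proportional to $a_0 a_1\cdots a_n$, which for a generic irrational is not dominated by any single $\lambda^n$. This is precisely the reason for restricting to rotation numbers in the set $\mathcal{A}$: conditions \eqref{cond1A}, \eqref{cond2A} and \eqref{cond3A} of Definition \ref{setA} are designed exactly so that the logarithmic averages of the partial quotients $a_n$ are controlled against the exponential gain $\mu^{n}$ provided by the hypotheses. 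Feeding those arithmetic bounds into the telescoping distortion estimate yields the required bound $K\lambda^n$, thereby verifying the hypothesis of Proposition \ref{criterion} and completing the proof. The entire scheme is a direct bicritical adaptation of the one-critical-point argument of \cite{dFdM}; the only genuinely new input is the construction of fine grids compatible with two critical points, which is carried out in Section \ref{sec:anewpartition}.
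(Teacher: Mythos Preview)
Your overall architecture is right—build the fine grid of Section \ref{sec:anewpartition} and verify the hypothesis of Proposition \ref{criterion}—but several steps of the execution are genuinely wrong. First, the estimate you attribute to Proposition \ref{criterion} is not what it asks for: the criterion requires $\big|\,|I|/|J|-|h(I)|/|h(J)|\,\big|\le C\lambda^n$ for \emph{adjacent} atoms $I,J\in\mathcal{Q}_n$, not $\big|\,|h(I)|/|I|-1\,\big|\le K\lambda^n$ for every atom; the latter would force $Dh\equiv 1$, i.e.\ $f=g$. More seriously, you mis-describe the fine grid itself. Atoms of $\mathcal{Q}_n$ are \emph{not} of the form $f^k(I_m^f(c_i))$ with $m\in\{n,n+1\}$; that is the standard partition $\mathcal{P}_n(c_i)$, which (as remarked right after Proposition \ref{criterion}) is a fine grid only for bounded-type rotation numbers. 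By Proposition \ref{gridpprop}, atoms of $\mathcal{Q}_n$ are unions of atoms of $\widehat{\mathcal{P}}_m$ and $\widetilde{\mathcal{P}}_m$ for various $m\le n$, possibly lateral or central pieces of the balanced decomposition of a saddle-node bridge; your factorization and the ensuing distortion argument simply do not apply to these objects.

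The proposed distortion bound also fails as stated: the iterate $g^k$ is applied to $I_m^f(c_i)$, an interval with a critical endpoint, so $g^k$ is not a diffeomorphism there, let alone one of bounded distortion, and the ``telescoping with chain rule'' sketch does not survive a single pass through a critical point. The paper's route is quite different. One first proves the pointwise estimate of Lemma \ref{keylemma1} (hence Proposition \ref{corokey}) by writing each vertex of $\widehat{\mathcal{P}}_{n+p}$ in $J_n^f(c_0)$ as $\varphi_1\circ\cdots\circ\varphi_L(x)$ via the trajectory decomposition of Lemma \ref{lematrayectoria}, comparing each $\varphi_j^*$ with its $g$-counterpart through the almost-parabolic estimate of Proposition \ref{propoapm} and Lemma \ref{ineqf*} (this is where Yoccoz's Lemma \ref{lemyoccoz} enters and produces the cubic factor $a_{m_j+1}^3$), and controlling the initial condition $x\in\{c_0,f^{q_{m+2}}(c_0),\mathfrak{c}_m\}$ using Remark \ref{remdistcrit}, which is the one place where the $C^1$ (rather than $C^0$) closeness of renormalizations is actually used. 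Condition \eqref{cond2A} of Definition \ref{setA} then absorbs the $a_{m_j+1}^3$ factors. Only afterwards does one pass to the adjacent-ratio estimate (Lemma \ref{ineqbeta}), and this still requires three cases—inside $J_k^f(c_0)$, inside $J_k^f(c_1)$, or elsewhere—together with Lemma \ref{corolariolemayoccoz} and condition \eqref{cond3A}; finally Lemma \ref{ineqlevelnm} and condition \eqref{cond1A} relate the level $m$ to the depth $n$ of the grid. None of these ingredients is visible in your sketch.
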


Let us briefly explain why Theorem \ref{theoremb} implies Theorem \ref{maintheorem}. First we note that, as pointed out in \cite[Proposition 2.2]{dFdM}, the real bounds (Theorem \ref{teobeau}) imply that exponential convergence of renormalization is preserved under conjugacy with a smooth diffeomorphism. Let us be more precise.

\begin{lemma}\label{lema0} Let $r\geq 1$, $f$ a $C^r$ multicritical circle map and $\phi$ a $C^r$ circle diffeomorphism. There exist $C=C(f, \phi)>0$ and $0<\mu=\mu(f)<1$ such that, for all $k \leq r-1$ and all $n \in \nt$, we have
\[
d_k \left(\mathcal{R}^{n}_i f \ , \ \mathcal{R}^{n}_{i}(\phi \circ f \circ \phi^{-1}) \right) \leq C\, \mu^{n},
\]for any given critical point $c_i$ of $f$, where $\mathcal{R}^{n}_{i}(\phi \circ f \circ \phi^{-1})$ denotes the $n$-th renormalization of $\phi \circ f \circ \phi^{-1}$ around its critical point $\phi(c_i)$.
\end{lemma}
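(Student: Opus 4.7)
The plan is to realize the renormalizations of $\tilde f := \phi\circ f\circ\phi^{-1}$ around $\tilde c_i := \phi(c_i)$ as smooth conjugates of the corresponding renormalizations of $f$ around $c_i$ by maps that are exponentially close to the identity. Exponential smallness in $d_k$ then follows from the $C^r$ smoothness of $\phi$ combined with the exponential shrinking of the renormalization intervals supplied by Corollary~\ref{corbeau}.

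I would first observe that since $\tilde f^{\,j}(\tilde c_i) = \phi\bigl(f^{j}(c_i)\bigr)$ for every $j$, the renormalization intervals around $\tilde c_i$ are the $\phi$-images of those around $c_i$, i.e.\ $I_{n}^{\tilde f}(\tilde c_i) = \phi\bigl(I_{n}^{f}(c_i)\bigr)$. Denote by $L_n$ and $\widetilde L_n$ the linear rescalings by $1/|I_{n}^{f}(c_i)|$ and $1/|I_{n}^{\tilde f}(\tilde c_i)|$ used in Definition~\ref{defren} to normalize the corresponding commuting pairs, and set
\[
\psi_n \;=\; \widetilde L_n\circ\phi\circ L_n^{-1}.
\]
A direct manipulation of the definitions yields the conjugacy identity
\[
\mathcal{R}_i^{n}\tilde f \;=\; \psi_n\circ\mathcal{R}_i^{n}f\circ\psi_n^{-1},
\]
to be understood component-wise on both $\eta$ and $\xi$. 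The problem thus reduces to proving the estimate $\|\psi_n-\mathrm{Id}\|_{C^{r-1}} = O(\mu^n)$.

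I would establish this estimate by a Taylor expansion of $\phi$ on the interval $I_{n}^{f}(c_i)$. A direct computation gives, for $k\geq 1$,
\[
\psi_n^{(k)}(t) \;=\; \phi^{(k)}\bigl(c_i + t\,|I_{n}^{f}(c_i)|\bigr)\cdot\frac{|I_{n}^{f}(c_i)|^{k}}{|I_{n}^{\tilde f}(\tilde c_i)|},
\]
together with $\psi_n(0)=0$ and $\psi_n(1)=1$. Writing $|I_{n}^{\tilde f}(\tilde c_i)|\big/|I_{n}^{f}(c_i)| = \phi'(\xi_n)$ with $\xi_n\in I_{n}^{f}(c_i)$ via the mean value theorem, the $C^r$ smoothness of $\phi$ then yields $\|\psi_n-\mathrm{Id}\|_{C^{r-1}}\leq C_1\,|I_{n}^{f}(c_i)|$. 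Iterating Corollary~\ref{corbeau} produces $|I_{n}^{f}(c_i)|\leq C_2\,\mu_0^{\,n}$ for some $0<\mu_0<1$ depending only on $f$, yielding the desired bound on the $C^{r-1}$-norm term of $d_k$. The remaining term $\bigl|\eta(0)/\xi(0) - \tilde\eta(0)/\tilde\xi(0)\bigr|$ compares the ratios $|I_{n+1}^{f}(c_i)|/|I_{n}^{f}(c_i)|$ and $|I_{n+1}^{\tilde f}(\tilde c_i)|/|I_{n}^{\tilde f}(\tilde c_i)|$, and is controlled by the same Taylor estimate applied to numerator and denominator.

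The main obstacle I expect is reconciling the linear rescaling used in Definition~\ref{defren} with the Möbius rescaling hidden inside the pseudometric $d_k$ of Definition~\ref{pseudometric}: one must verify that conjugating the already-established $C^{r-1}$ bound by the relevant Möbius maps $\tau_i$ still leaves an $O(\mu^n)$ estimate. Thanks to the real bounds (Theorem~\ref{teobeau}) the ratios $|I_{n+1}^{f}(c_i)|/|I_{n}^{f}(c_i)|$ stay uniformly bounded away from $0$ and $\infty$, so the $\tau_i$ lie in a compact family of smooth diffeomorphisms of $[-1,1]$, and the chain rule absorbs the additional conjugation without degrading the exponential rate. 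Once this is in place, the three ingredients — the conjugacy identity for the renormalizations, the Taylor bound on $\psi_n$, and the exponential shrinking from the real bounds — combine to yield Lemma~\ref{lema0}.
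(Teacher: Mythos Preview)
The paper does not actually supply a proof of Lemma~\ref{lema0}; it simply attributes the statement to \cite[Proposition~2.2]{dFdM} and moves on. Your proposal reconstructs precisely the argument used in that reference: realize $\mathcal{R}_i^n(\phi\circ f\circ\phi^{-1})$ as a conjugate of $\mathcal{R}_i^n f$ by the rescaled diffeomorphism $\psi_n=\widetilde L_n\circ\phi\circ L_n^{-1}$, then use a Taylor expansion of $\phi$ on $I_n^f(c_i)$ together with the exponential shrinking of these intervals (Corollary~\ref{corbeau}) to show $\psi_n\to\mathrm{Id}$ exponentially in $C^{r-1}$. Your handling of the M\"obius normalizations hidden in $d_k$ via the real bounds is also the standard one. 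So the proposal is correct and matches the approach the paper is invoking.

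One minor caveat: your bound $\|\psi_n-\mathrm{Id}\|_{C^{r-1}}\leq C_1\,|I_n^f(c_i)|$ uses that $\phi'$ is Lipschitz, which requires $r\geq 2$; for $r=1$ one only gets a rate governed by the modulus of continuity of $\phi'$. This is harmless here, since multicritical circle maps are $C^3$ by Definition~\ref{def:multicritic} and the application to Theorem~\ref{maintheorem} only needs the $k=1$ case with $C^3$ data.
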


The following result is borrowed from \cite[Lemma 4.7]{dFdM}.
 
\begin{lemma}\label{corolemma4.7} Let $f$ and $g$ be two multicritical circle maps with the same critical set and such that there exist $C>0$ and $0<\mu<1$ satisfying $d_0(\mathcal{R}_i^{n}f, \mathcal{R}_i^{n}g) \leq C\, \mu^{n}$ for all $i\in\{0,...,N-1\}$ and for all $n\in \nt$. Then the ratio $\big\{|I_{n}^g(c_i)|/|I_{n}^f(c_i)|\big\}$ converges to a limit exponentially fast for all $i\in\{0,...,N-1\}$. Moreover, for all $m,k \geq 1$  we have:
\begin{equation}\label{estratios}
\left| \frac{|I_{m}^f(c_i)|}{|I_{k}^f(c_i)|}- \frac{|I_{m}^g(c_i)|}{|I_{k}^g(c_i)|} \right| \leq C\, \mu^{ min\{m,k \}} 
\frac{|I_{m}^f(c_i)|}{|I_{k}^f(c_i)|}\,.
\end{equation}
\end{lemma}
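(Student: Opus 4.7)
The plan is to extract the ratio $|I_{n+1}|/|I_{n}|$ directly from the $d_0$-pseudodistance between the normalized renormalizations, and then chain these one-step ratios together.

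First I would unpack the definition of $d_0$. Recall that if $\zeta=(\eta,\xi)$ is the commuting pair associated to $f$ at scale $n$ around $c_i$, then $I_\xi=I_{n+1}^f(c_i)$, $I_\eta=I_n^f(c_i)$, and so
\[
\left|\frac{\eta_f(0)}{\xi_f(0)}\right| \;=\; \frac{|I_{n+1}^f(c_i)|}{|I_{n}^f(c_i)|}.
\]
The same holds for $g$. Consequently the hypothesis $d_0(\mathcal{R}_i^n f,\mathcal{R}_i^n g)\le C\mu^n$ immediately yields
\[
\left|\frac{|I_{n+1}^f(c_i)|}{|I_{n}^f(c_i)|} \;-\; \frac{|I_{n+1}^g(c_i)|}{|I_{n}^g(c_i)|}\right| \;\le\; C\mu^n.
\]
Write $r_n=|I_{n}^g(c_i)|/|I_{n}^f(c_i)|$ and $s_n^f=|I_{n+1}^f(c_i)|/|I_{n}^f(c_i)|$, $s_n^g=|I_{n+1}^g(c_i)|/|I_{n}^g(c_i)|$, so that $r_{n+1}/r_n=s_n^g/s_n^f$ and $|s_n^f-s_n^g|\le C\mu^n$.

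Next I would invoke the real bounds. By Corollary \ref{corbeau} (applied to both $f$ and $g$), the ratios $s_n^f$ and $s_n^g$ are uniformly bounded away from $0$ (and from $1$) for large $n$. Hence
\[
\left|\frac{r_{n+1}}{r_n}-1\right| \;=\; \frac{|s_n^g-s_n^f|}{s_n^f} \;\le\; C'\mu^n.
\]
Taking logarithms and telescoping, for $m>k$ large,
\[
\bigl|\log r_m - \log r_k\bigr| \;\le\; \sum_{j=k}^{m-1}\bigl|\log(r_{j+1}/r_j)\bigr| \;\le\; 2C'\sum_{j=k}^{m-1}\mu^j \;\le\; C''\mu^k.
\]
This proves that $\{r_n\}$ is Cauchy and converges exponentially fast to some positive limit, establishing the first assertion.

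For the estimate \eqref{estratios}, assume without loss of generality that $k\le m$ (the opposite case is symmetric). Factor:
\[
\frac{|I_m^f|}{|I_k^f|}-\frac{|I_m^g|}{|I_k^g|} \;=\; \frac{|I_m^f|}{|I_k^f|}\left(1-\frac{r_m}{r_k}\right).
\]
From the telescoping bound above, $|\log(r_m/r_k)|\le C''\mu^k$, and since $\log(r_m/r_k)$ is small, exponentiating gives $|r_m/r_k-1|\le C'''\mu^k=C'''\mu^{\min\{m,k\}}$. Plugging back in delivers \eqref{estratios}. The case $m<k$ is handled identically by exchanging roles (or by reading the telescope from $m$ up to $k$ instead of from $k$ up to $m$). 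No step here poses a real obstacle; the only subtle point is that the real bounds are essential to convert the additive control $|s_n^f-s_n^g|\le C\mu^n$ into the multiplicative control $|r_{n+1}/r_n-1|\le C'\mu^n$, because one must divide by $s_n^f$ and know it is not crushed to zero.
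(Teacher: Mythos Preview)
Your argument is correct and is essentially the standard one (which is presumably what \cite[Lemma 4.7]{dFdM} does; the present paper does not give its own proof but simply borrows the statement from there). The only minor inaccuracy is that you cite Corollary~\ref{corbeau} for the lower bound on $s_n^f$, whereas that corollary only gives the \emph{upper} bound $|I_{n+1}|\le\mu|I_n|$; the lower bound you need follows directly from the real bounds (Theorem~\ref{teobeau}), since $I_n(c_i)$ and $I_{n+1}(c_i)$ are adjacent atoms of $\mathcal{P}_n(c_i)$.
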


We remark that estimate \eqref{estratios} given by Lemma \ref{corolemma4.7} will also be useful in Section \ref{SecProofThmB}, during the proof of Lemma \ref{keylemma1}. With Lemma \ref{lema0} and Lemma \ref{corolemma4.7} at hand, it is not difficult to see that Theorem \ref{theoremb} implies Theorem \ref{maintheorem}. Indeed, let $f$ and $g$ be $C^3$ bicritical circle maps with the same signature and such that its common rotation number belongs to the set $\mathcal{A}$. Assume, moreover, that the renormalizations of $f$ and $g$ around corresponding critical points converge together exponentially fast in the $C^1$ topology. By Lemma \ref{lema0} we can conjugate one of the two maps (say $g$) with a suitable $C^{\infty}$ diffeomorphism that identifies the critical points of $g$ with those of $f$, while preserving the exponential contraction in the $C^1$ metric. By Lemma \ref{corolemma4.7}, we can choose the previous conjugacy in such a way that the limit of the sequence $\big\{|I_{n}^g(c_i)|/|I_{n}^f(c_i)|\big\}$ is in fact equal to $1$, for all $i\in\{0,...,N-1\}$. By Theorem \ref{theoremb}, $f$ and $g$ are conjugate to each other by a $C^{1+\alpha}$ diffeomorphism, for some $\alpha>0$. This shows that Theorem \ref{theoremb} implies Theorem \ref{maintheorem}.

Sections \ref{sec:anewpartition} and \ref{SecProofThmB} (the remainder of this paper) are devoted to the proof of Theorem \ref{theoremb}.

\section{Fine grids}\label{sec:anewpartition}

Let $f$ and $g$ be $C^3$ bicritical circle maps with the same irrational rotation number in the set $\mathcal{A}$ (recall Definition \ref{setA}). Suppose that both $f$ and $g$ have the same signature and exactly the same critical set, and let $h$ be the homeomorphism considered in the statement of Theorem \ref{theoremb} (Section \ref{secreduction} above). As explained in the introduction, we would like to prove that $h$ ``almost preserves" ratios between lengths of intervals, provided we consider very small intervals, which are very close to each other. To achieve this, we will construct in this section a suitable sequence of nested partitions of the unit circle, such that it will be enough to control the action of $h$ on the vertices of those partitions. The specific type of partitions that we need in the present paper are given by the following definition, which is borrowed from \cite[Section 4.2]{dFdM}.

\begin{definition}\label{def:finegrid} A \emph{fine grid} is a sequence $\{\mathcal{Q}_{n}\}_{n \geq 0}$ of finite interval partitions of $S^{1}$ satisfying the following three conditions. 
\begin{enumerate}
\item\label{item1FG} Each $\mathcal{Q}_{n+1}$ is a strict refinement of $\mathcal{Q}_n$;
\item\label{item2FG} There exists $b\in\nt$ such that each atom of $\mathcal{Q}_{n}$ coincides with the union of at most $b$ atoms of $\mathcal{Q}_{n+1}$;
\item\label{item3FG} There exists $C >1$ such that $C^{-1}\,|I| \leq |J| \leq C\,|I|$ for each pair of adjacent atoms $I,J\in\mathcal{Q}_{n}$. 
\end{enumerate}
\end{definition}

The fundamental property of fine grids that we will use here is the following criterion, which is \cite[Proposition 4.3(b)]{dFdM}.

\begin{proposition}\label{criterion} Let $\{\mathcal{Q}_{n}\}_{n\in\nt}$ be a fine grid as in Definition \ref{def:finegrid}. Let $h$ be a circle homeomorphism 
such that there exist constants $C>0$ and $\lambda \in (0,1)$ satisfying
\begin{equation}\label{Carlesonineq}
\left|\dfrac{|I|}{|J|}- \frac{|h(I)|}{|h(J)|} \right| \leq C\lambda^{n},
\end{equation}
for each pair of adjacent intervals $I,J \in \mathcal{Q}_{n}$ and for all $n\in\nt$. Then $h$ is a $C^{1+\alpha}$-diffeomorphism.
\end{proposition}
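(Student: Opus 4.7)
The plan is to extract from the Carleson-type estimate \eqref{Carlesonineq} a Hölder continuous positive function $\phi\colon S^1\to(0,\infty)$ that will play the role of the derivative of $h$. The key object is the piecewise constant ratio $\phi_n(x)=|h(I_n(x))|/|I_n(x)|$, where $I_n(x)$ denotes the atom of $\mathcal{Q}_n$ containing $x$.

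First I would unpack Definition \ref{def:finegrid} combinatorially. Strict refinement \eqref{item1FG}, bounded branching \eqref{item2FG} and comparability of adjacent atoms \eqref{item3FG} together force a geometric decay: there exist $\theta\in(0,1)$ and $C'>0$, depending only on $b$ and $C$, such that $|I|\leq C'\theta^n$ for every $I\in\mathcal{Q}_n$. Iterating \eqref{Carlesonineq} at most $b-1$ times along a chain of adjacent atoms of $\mathcal{Q}_n$ sharing a common parent in $\mathcal{Q}_{n-1}$, I would also upgrade the hypothesis to
\[
\left|\frac{|I|}{|J|}-\frac{|h(I)|}{|h(J)|}\right|\leq C_1\lambda^n
\]
for any two such ``siblings'' $I,J\in\mathcal{Q}_n$, with $C_1$ depending only on $b$, $C$ and the original constant.

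The second step is to prove that $\phi_n$ converges uniformly to a positive bounded function $\phi$. Writing the parent $J=I_n(x)$ as the disjoint union $J_1\sqcup\cdots\sqcup J_k$ of its children in $\mathcal{Q}_{n+1}$, with $I_{n+1}(x)=J_{k_0}$, the sibling version of \eqref{Carlesonineq} gives $|h(J_i)|=(|J_i|/|J_{k_0}|)\,|h(J_{k_0})|\,(1+O(\lambda^n))$; summing in $i$ yields
\[
\frac{|h(J)|}{|J|}=\frac{|h(J_{k_0})|}{|J_{k_0}|}\bigl(1+O(\lambda^n)\bigr),
\]
i.e.\ $\phi_n(x)=\phi_{n+1}(x)(1+O(\lambda^n))$. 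The normalizations $\sum_{I\in\mathcal{Q}_n}|I|=\sum_{I\in\mathcal{Q}_n}|h(I)|=1$ force some atom at each level to satisfy $\phi_n\asymp 1$, and chaining adjacent comparisons transports this estimate around the whole circle, giving uniform bounds $0<c_0\leq\phi_n\leq C_2$. Combined with the multiplicative telescoping, this produces $|\phi_{n+1}(x)-\phi_n(x)|\leq C_3\lambda^n$ for every $x$, hence $\phi_n\to\phi$ uniformly at exponential speed. For $x,y$ in distinct adjacent atoms of $\mathcal{Q}_n$, a direct manipulation of \eqref{Carlesonineq} gives $|\phi_n(x)-\phi_n(y)|=O(\lambda^n)$, so $|\phi(x)-\phi(y)|=O(\lambda^n)$; since such $x,y$ satisfy $|x-y|\leq 2C'\theta^n$, we obtain $\phi\in C^\alpha(S^1)$ with $\alpha=\log(1/\lambda)/\log(1/\theta)$. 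Finally, the absolutely continuous measure $d\nu=\phi\,dx$ and the pull-back of Lebesgue measure under $h^{-1}$ both assign to each atom $I\in\mathcal{Q}_n$ a mass within a factor $1+O(\lambda^n)$ of $|h(I)|$; since $\bigcup_n\mathcal{Q}_n$ generates the Borel $\sigma$-algebra these measures coincide, so $h(y)-h(x)=\int_x^y\phi(t)\,dt$, and therefore $h\in C^1$ with $h'=\phi\in C^\alpha$, i.e.\ $h\in C^{1+\alpha}$.

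I expect the main obstacle to be the multiplicative telescoping $\phi_n=\phi_{n+1}(1+O(\lambda^n))$: one has to verify that the $O(\lambda^n)$ errors attached to each of the at most $b$ children do not conspire to overwhelm the main term, which is precisely where the bounded branching hypothesis \eqref{item2FG} is crucial. A related subtlety is the promotion of \eqref{Carlesonineq} from adjacent to ``sibling'' atoms without the constant $C_1$ exploding, which again relies on the uniform bound $b$. Once these two discrete estimates are established, the remainder of the argument—uniform bounds on $\phi_n$, uniform convergence, Hölder control and identification of $\phi$ with $Dh$—follows by routine manipulations.
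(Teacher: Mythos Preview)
Your approach is essentially the one in the paper: your step functions $\phi_n$ are exactly the right-derivatives of the piecewise affine interpolants of $h$ through the vertices of $\mathcal{Q}_n$, and the paper's sketch proceeds by showing these converge uniformly to a H\"older function. The measure-theoretic identification of $\phi$ with $Dh$ is a nice variant of the paper's ``elementary reasons''.

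There are, however, two genuine gaps in your outline. The more serious one is in the H\"older step. From the fine grid axioms you extract only an \emph{upper} bound $|I|\le C'\theta^n$ for $I\in\mathcal{Q}_n$, and then write ``since such $x,y$ satisfy $|x-y|\le 2C'\theta^n$, we obtain $\phi\in C^\alpha$ with $\alpha=\log(1/\lambda)/\log(1/\theta)$''. This implication goes the wrong way: an upper bound on $|x-y|$ cannot control $\lambda^n$ from above in terms of $|x-y|^\alpha$. What you need is a \emph{lower} bound on atom sizes, say $|I|\ge C_0^{-1}\lambda_0^n$; this also follows from Definition~\ref{def:finegrid} (see the two-sided estimate \eqref{lema4.2dFdM} in the paper). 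Then, given $x\ne y$, choose the largest $n$ for which $x,y$ lie in the same or adjacent atoms of $\mathcal{Q}_n$; at level $n+1$ they are separated by a full atom, so $|x-y|\ge C_0^{-1}\lambda_0^{n+1}$, and combining with $|\phi(x)-\phi(y)|=O(\lambda^n)$ gives $\alpha=\log(1/\lambda)/\log(1/\lambda_0)$, exactly as the paper indicates.

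The second gap is your argument for uniform bounds on $\phi_n$ by ``chaining adjacent comparisons around the whole circle''. At level $n$ there can be up to $b^n$ atoms, and each adjacent comparison introduces a multiplicative error $1+O(\lambda^n)$; the product $(1+C\lambda^n)^{b^n}$ is bounded only when $b\lambda<1$, which is not assumed. The fix is already implicit in your own telescoping identity $\phi_n(x)=\phi_{n+1}(x)(1+O(\lambda^n))$: iterate it down to level $0$ to get $\phi_n(x)=\phi_0(x)\prod_{k=0}^{n-1}(1+O(\lambda^k))$, and use that $\phi_0$ is trivially bounded above and below since $\mathcal{Q}_0$ is a finite partition.
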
 

\begin{proof}[Sketch of the proof of Proposition \ref{criterion}] As it easily follows from Definition \ref{def:finegrid}, given a fine grid $\{\mathcal{Q}_{n}\}_{n\in\nt}$ there exist constants $C_0>1$ and $0<\lambda_0<\lambda_1<1$ such that
\begin{equation}\label{lema4.2dFdM}
\frac{1}{C_0}\,\lambda_0^n\leq\min_{I\in\mathcal{Q}_n}\big\{|I|\big\}\leq\max_{I\in\mathcal{Q}_n}\big\{|I|\big\}\leq C_0\,\lambda_1^n\quad\mbox{for all $n\in\nt$.}
\end{equation}Moreover, an immediate consequence of condition \eqref{Carlesonineq} in the statement is that the image under $h$ of the fine grid $\{\mathcal{Q}_{n}\}_{n\in\nt}$ is also a fine grid. From this and \eqref{lema4.2dFdM}, we deduce that the sequence of piecewise affine homeomorphisms that coincide with $h$ on the vertices of each $\mathcal{Q}_{n}$ converges uniformly ($C^0$ exponentially fast) to $h$. Since each fine grid is determined by a finite number of vertices, these approximations have a well defined right-derivative (which is a step function with finitely many jumps), and it can be proved (combining Definition \ref{def:finegrid}, \eqref{Carlesonineq} and \eqref{lema4.2dFdM}) that these right-derivatives converge uniformly to an $\alpha$-H\"older continuous function (whose H\"older constant $\alpha$ depends on $\lambda$ and $\lambda_0$). By elementary reasons, this implies that $h$ is a $C^{1+\alpha}$-diffeomorphism. For more details, see \cite[pages 357-358]{dFdM}.
\end{proof}

We remark that the standard partitions $\mathcal{P}_n$ (see Section \ref{secrealbounds}) do not determine a fine grid, unless the rotation number of $f$ is of bounded type. Our goal in this section is to construct a suitable fine grid for any given bicritical circle map $f$, while in Section \ref{SecProofThmB} we will prove that such fine grid (together with the topological conjugacy $h$ considered in Theorem \ref{theoremb}) satisfies the assumptions of Proposition \ref{criterion}. This will establish Theorem \ref{theoremb}. As already explained in Section \ref{secreduction}, Theorem \ref{theoremb} implies our main result, namely Theorem~\ref{maintheorem}.

\subsection{Auxiliary partitions}\label{secinter} Let $f$ be a $C^3$ bicritical circle map with irrational rotation number $\rho\in(0,1)$ and critical points $c_0$ and $c_1$ (we will focus now on $c_0$, but of course all constructions below can be done with $c_1$). As explained in Section \ref{rmcm}, for any given $n\in\nt$, the first return map of $f$ to $J_n(c_0)=I_{n+1}(c_0) \cup I_{n}(c_0)$ is given by the commuting pair $\big(f^{q_{n+1}}|_{I_n(c_0)}, f^{q_n}|_{I_{n+1}(c_0)}\big)$. This return map has two critical points as well: one of them being $c_0$ itself, and the other one being the unique preimage of $c_1$ for the return (note that they coincide if, and only if, $c_1$ belongs to the positive orbit of $c_0$). Such a critical point for the return map will be called \emph{the free critical point at level $n$}, and it will be denoted by~$\mathfrak{c}_{n}$.

\begin{definition}\label{deftwobridgeslevel} A natural number $n$ is a \emph{two-bridges level} for $f$ at $c_0$ if $a_{n+1} \geq 23$, the free critical point $\mathfrak{c}_{n}$ belongs to $I_{n}(c_0)\setminus I_{n+2}(c_0)$ and moreover$$\mathfrak{c}_{n}\in\bigcup_{j=11}^{a_{n+1}-10}\Delta_j\,,$$where $\Delta_j=f^{(j-1)q_{n+1}+q_n}\big(I_{n+1}(c_0)\big)$ for all $j\in\{1,...,a_{n+1}\}$.
\end{definition}

\begin{remark} Of course there is nothing special about the number $23$. It is just an arbitrary choice that we fix throughout the remainder of this paper.
\end{remark}

In this subsection we construct a sequence $\big\{\widehat{\mathcal{P}}_n\big\}_{n\in\nt}$ of finite interval partitions (modulo endpoints) of the unit circle, satisfying the following six properties.
\begin{enumerate}
\item\label{itemdyndef} Each partition $\widehat{\mathcal{P}}_n$ is dynamically defined from the critical set of $f$: all its vertices are iterates (either forward or backward) of $c_0$ or $c_1$.
\vspace{.15cm}
\item\label{itemfundom} Both intervals $I_{n}(c_0)$ and $I_{n+1}(c_0)$ belong to $\widehat{\mathcal{P}}_{n}$.
\vspace{.15cm}
\item\label{itemref} The partition $\widehat{\mathcal{P}}_{n+1}$ is a \emph{refinement} of $\widehat{\mathcal{P}}_{n}$\,: each interval of $\widehat{\mathcal{P}}_{n}$ either coincides with the disjoint union of at least two intervals of $\widehat{\mathcal{P}}_{n+1}$, or belongs itself to $\widehat{\mathcal{P}}_{n+1}$ (in which case it coincides with the disjoint union of at least two intervals of $\widehat{\mathcal{P}}_{n+2}$).
\vspace{.15cm}
\item\label{itemfree} If $n$ is a two-bridges level for $f$ at $c_0$, the free critical point $\mathfrak{c}_{n}$ is a vertex of $\widehat{\mathcal{P}}_{n+1}$.
\vspace{.15cm}
\item\label{itemreappear} Any vertex of the standard partition $\mathcal{P}_n$ belongs to $\widehat{\mathcal{P}}_{m}$ for some $m \geq n$.
\vspace{.15cm}
\item\label{itemboundgeom} There exists a constant $C>1$ (depending only on $f$) such that $C^{-1}\,|I| \leq |J| \leq C\,|I|$ for each pair of adjacent atoms $I,J\in\widehat{\mathcal{P}}_n$.
\end{enumerate}

The first five properties above describe the \emph{combinatorics} of the sequence $\{\widehat{\mathcal{P}}_n\}$, while Item \eqref{itemboundgeom} bounds its \emph{geometry}. The main difference between the partitions $\widehat{\mathcal{P}}_n$ and the standard partitions $\mathcal{P}_n(c_0)$ is Item \eqref{itemfree}. The partitions $\widehat{\mathcal{P}}_n$ will be called \emph{auxiliary partitions} around $c_0$. Just as the standard partitions, they do not determine a fine grid, unless the rotation number of $f$ is of bounded type. However, in Section \ref{sec:constructingfinegrid} we will use these auxiliary partitions to finally build a fine grid $\{\mathcal{Q}_n\}_{n\in\nt}$ for the bicritical circle map $f$ (this fine grid will be extracted from the auxiliary partitions, in the sense that any vertex of $\mathcal{Q}_n$ will also be a vertex of $\widehat{\mathcal{P}}_n$, see Proposition \ref{gridpprop} in Section \ref{secfinalFG} below). Further properties of the auxiliary partitions (such as Lemma \ref{lematrayectoria} below) will be useful in Section \ref{SecProofThmB}.

\subsubsection{Building auxiliary partitions}\label{secbuildaux} For the initial partition $\widehat{\mathcal{P}}_{0}$ we simply consider the standard partition $\mathcal{P}_0(c_0)$, that is:$$\widehat{\mathcal{P}}_{0}=\mathcal{P}_0(c_0)=\left\{\big[f^{i}(c_0),f^{i+1}(c_0)\big]:\,i\in\{0,...,a_0-1\}\right\}\cup\big\{\big[f^{a_{0}}(c_0),c_0\big]\big\},$$where $a_0$ is the integer part of $1/\rho$ (see Section \ref{secrealbounds}). Now we fix some $n\in\nt$ and we build $\widehat{\mathcal{P}}_{n+1}$ from $\widehat{\mathcal{P}}_{n}$ (this defines inductively the whole sequence $\{ \widehat{\mathcal{P}}_{n}\}_{n\in\nt}$).

On one hand, if $n$ is a two-bridges level for $f$ at $c_0$, consider the following three pairwise disjoint fundamental domains for $f^{q_{n+1}}$, all of them contained in $I_{n}(c_0)\setminus I_{n+2}(c_0)$:
\begin{align*}
&\Delta_1=f^{q_n}\big(I_{n+1}(c_0)\big)\,,\\
&\widehat{\Delta}_n=[f^{q_{n+1}}(\mathfrak{c}_{n}), \mathfrak{c}_{n}]\,,\\
&\Delta_{a_{n+1}}=f^{(a_{n+1}-1)q_{n+1}}(\Delta_1)=\big[f^{q_{n+2}}(c_0),f^{(a_{n+1}-1)q_{n+1}+q_n}(c_0)\big]\,.
\end{align*}For each $j \in \mathbb{Z}$\,, denote by $\Delta_1^j$\,,\, $\widehat{\Delta}_n^{j}$\, and\, $\Delta_{a_{n+1}}^j$ the intervals $f^{jq_{n+1}}(\Delta_1)$, $f^{jq_{n+1}}(\widehat{\Delta}_n)$ and $f^{jq_{n+1}}(\Delta_{a_{n+1}})$ respectively. Let $r(n)\,,\,\ell(n)\in\{0, \dots, a_{n+1}\}$ be given by:\[r(n)= \min \{ j \in \nt: \widehat{\Delta}_n^{-j} \cap \Delta_1^j \neq \emptyset  \} \hspace{0.4cm} \text{and}\hspace{0.4cm} \ell(n)= \min \{ j \in \nt: \Delta_{a_{n+1}}^{-j} \cap \widehat{\Delta}_n^{j}  \neq \emptyset  \}.\]Note that the intersections above may be given by a single point. Just to fix ideas, let us assume that $\Delta_{a_{n+1}}^{-\ell}\setminus\widehat{\Delta}_n^{\ell}\subseteq\widehat{\Delta}_n^{\ell+1}$ and $\widehat{\Delta}_n^{-r}\setminus\Delta_1^r\subseteq\Delta_1^{r+1}$, and consider $\Delta_{R}^{n+1}=\Delta_1^r \cup \widehat{\Delta}_n^{-r}$ and $\Delta_{L}^{n+1}=\Delta_{a_{n+1}}^{-\ell} \cup  \widehat{\Delta}_n^{\ell}$. With this at hand, we define the auxiliary partition $\widehat{\mathcal{P}}_{n+1}$ inside $I_n(c_0)$, for a two-bridges level $n$, as$$\widehat{\mathcal{P}}_{n+1}|_{I_n(c_0)}=\big\{I_{n+2}(c_0)\,,\,\{\Delta_{a_{n+1}}^{-j}\}_{j=0}^{j=\ell-1},\, \Delta_{L}^{n+1}\,,\,\{\widehat{\Delta}_n^{j}\}_{j=1-r}^{\ell-1}\,,\,\Delta_{R}^{n+1}\,,\,\{\Delta_1^j\}_{j=0}^{j=r-1}\big\}\,,$$and we spread this definition to the whole circle in the usual way:$$\widehat{\mathcal{P}}_{n+1}=\left\{f^{i}\big(I_{n+1}(c_0)\big):\,0 \leq i \leq q_{n}-1\right\}\cup\left\{ f^{j}(I):I\in\widehat{\mathcal{P}}_{n+1}|_{I_n(c_0)}\,,\,0\leq j\leq q_{n+1}-1 \right\}.$$

\begin{figure}[!ht]
\centering
\psfrag{In+2}[][][1]{$I_{n+2}$} 
\psfrag{Deltaan+1}[][][1]{$\Delta_{a_{n+1}}$}
\psfrag{R}[][][1]{$\Delta_{a_{n+1}}^{-\ell+1}$}
\psfrag{Deltal}[][]{$\Delta_{L}^{n+1}$} 
\psfrag{S}[][][1]{$\small{\widehat{\Delta}_n^{\ell-1}}$}
\psfrag{F}[][][1]{$f^{q_{n+1}}$}
\psfrag{G}[][][1]{$f^{-q_{n+1}}$} 
\psfrag{Delta0}[][][1]{$\Delta_1$}
\psfrag{c}[][][1]{$\mathfrak{c}_{n}$}
\psfrag{Delta0}[][][1]{$\Delta_1$}
\psfrag{Deltahat}[][][1]{$\widehat{\Delta}_n$}
\psfrag{Deltahat-1}[][][1]{$\widehat{\Delta}_n^{-1}$}
\psfrag{V}[][][1]{$\widehat{\Delta}_n^{-1}$}
\psfrag{T}[][][1]{$\small{\widehat{\Delta}_n^{-r+1}}$}
\psfrag{Deltar}[][]{$\Delta_{R}^{n+1}$}
\psfrag{U}[][]{$\Delta_1^{r-1}$} 
\psfrag{...}[][]{$\dots$} 
\includegraphics[width=6.3in]{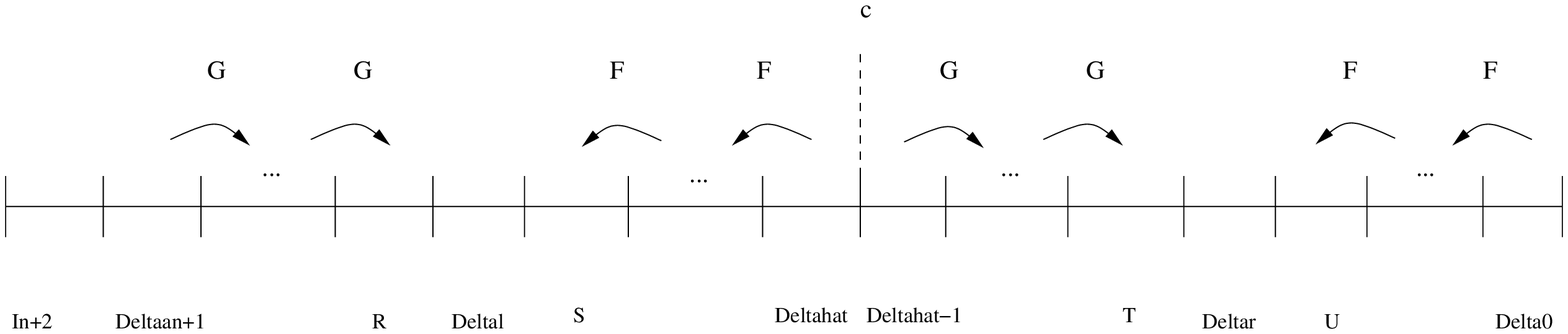}
\caption{\label{decomposition} The auxiliary partition $\widehat{\mathcal{P}}_{n+1}$ inside $I_n(c_0)$, for a two-bridges level $n$.}
\end{figure}

On the other hand, if $n$ is \emph{not} a two-bridges level for $f$ at $c_0$, we would like to consider $\widehat{\mathcal{P}}_{n+1}$ just as the standard partition $\mathcal{P}_{n+1}(c_0)$. However, $\mathcal{P}_{n+1}(c_0)$ is given only by iterates of $c_0$, and then it does not have $\mathfrak{c}_{i}$ as a vertex for any two-bridges level $i\in\{0,...,n-1\}$. In other words, the partition $\mathcal{P}_{n+1}(c_0)$ is not a refinement of $\widehat{\mathcal{P}}_{n}$ (unless, of course, no previous level was a two-bridges level). To correct this flaw, we simply proceed as follows: for any given vertex $v$ of $\widehat{\mathcal{P}}_{n}$, let $w$ be the vertex of $\mathcal{P}_{n+1}(c_0)$ closest to $v$ (in the Euclidean distance). Then we just replace $w$ by $v$: we remove $w$ from $\mathcal{P}_{n+1}(c_0)$, and we add $v$ to this partition (in case $v$ is the middle point of the interval of $\mathcal{P}_{n+1}(c_0)$ that contains it, we just add it to $\mathcal{P}_{n+1}(c_0)$ without removing any point). After this \emph{replacement procedure}, we denote by $\widehat{\mathcal{P}}_{n+1}$ the obtained partition.

With this construction at hand, we define inductively the sequence of finite partitions $\big\{\widehat{\mathcal{P}}_{n}\big\}_{n\in\nt}$ of the unit circle. The combinatorial properties \eqref{itemdyndef} to \eqref{itemreappear} listed above are not difficult to check, while Item \eqref{itemboundgeom} follows by combining the following lemma with the real bounds (Theorem \ref{teobeau}).

\begin{lemma}\label{lemaintersect} If $\Delta\in\widehat{\mathcal{P}}_n$ and $\Delta'\in \mathcal{P}_n(c_0)$ are two atoms such that $\Delta\cap\Delta'\neq\emptyset$, then $|\Delta|\asymp|\Delta'|$.
\end{lemma}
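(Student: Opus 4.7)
The plan is induction on $n$, with the base case $n=0$ being immediate since $\widehat{\mathcal{P}}_0 = \mathcal{P}_0(c_0)$. For the inductive step I would reduce to two tasks: first, verify the statement inside the interval $I_n(c_0)$; second, transport the comparability to the rest of the circle by dynamics. Both $\widehat{\mathcal{P}}_{n+1}$ and $\mathcal{P}_{n+1}(c_0)$ have the same ``external'' structure: their atoms outside $I_n(c_0)$ are either forward iterates of atoms inside $I_n(c_0)$ under $f^j$ for $0\leq j\leq q_{n+1}-1$, or belong to the common family $\{f^i(I_{n+1}(c_0))\}_{0\leq i\leq q_n-1}$. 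By the real bounds (Theorem \ref{teobeau}) together with the $C^1$ bounds of Corollary \ref{c1bounds}, the relevant branches of $f^j$ have uniformly bounded distortion, so any comparability between atoms inside $I_n(c_0)$ propagates globally.

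The heart of the argument is then to show that every atom of $\widehat{\mathcal{P}}_{n+1}$ inside $I_n(c_0)$ has length $\asymp |I_{n+1}(c_0)|$, which by Theorem \ref{teobeau} is also the common scale of the atoms of $\mathcal{P}_{n+1}(c_0)$ there. Two cases arise. If $n$ is a two-bridges level, the construction is explicit: the atoms $\Delta_1^j$ and $\Delta_{a_{n+1}}^{-k}$ are themselves atoms of $\mathcal{P}_{n+1}(c_0)$; each $\widehat{\Delta}_n^{j}$ is a fundamental domain for $f^{q_{n+1}}$ inside $I_n(c_0)\setminus I_{n+2}(c_0)$ adjacent to some standard fundamental domain $\Delta_i$, whence $|\widehat{\Delta}_n^{j}|\asymp|\Delta_i|$ by Remark \ref{remcomp} and Theorem \ref{teobeau}; and the compound atoms $\Delta_L^{n+1}$, $\Delta_R^{n+1}$ are unions of two such fundamental domains. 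If $n$ is not a two-bridges level, then $\widehat{\mathcal{P}}_{n+1}$ is obtained from $\mathcal{P}_{n+1}(c_0)$ via the replacement procedure. Any ``moved'' vertex $v$ of $\widehat{\mathcal{P}}_n$ is a forward $f$-iterate of a free critical point $\mathfrak{c}_k$ from some earlier two-bridges level $k<n$, and the inductive hypothesis (applied at level $n$) together with Theorem \ref{teobeau} should place $v$ at a uniformly bounded relative position inside its containing atom of $\mathcal{P}_{n+1}(c_0)$. Thus replacing the nearest vertex $w$ by $v$ modifies the two affected atom lengths only by a bounded multiplicative factor, and comparability is preserved.

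The main obstacle I anticipate is the non-two-bridges case: one must rule out that $v$ lies arbitrarily close to the endpoint of its containing $\mathcal{P}_{n+1}(c_0)$-atom farthest from $w$, for otherwise the replacement would create an arbitrarily small sliver. The cleanest route is to track, across all intermediate levels, how the ancestor $\mathfrak{c}_k$ sits inside the nested standard partitions $\mathcal{P}_m(c_0)$ for $k\leq m\leq n+1$, exploiting Definition \ref{deftwobridgeslevel} (which forces $\mathfrak{c}_k$ to lie strictly in the interior of some middle bridge $\Delta_j$ at level $k$, bounded away from its endpoints) together with the real bounds applied at each intermediate level. Once the intersection estimate of the present lemma is secured, property \eqref{itemboundgeom} of the auxiliary partitions follows immediately: two adjacent atoms of $\widehat{\mathcal{P}}_n$ both intersect atoms of $\mathcal{P}_n(c_0)$ that are themselves adjacent (or coincide), and those are comparable by Theorem \ref{teobeau}.
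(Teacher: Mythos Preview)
Your central claim — that ``every atom of $\widehat{\mathcal{P}}_{n+1}$ inside $I_n(c_0)$ has length $\asymp |I_{n+1}(c_0)|$, which by Theorem \ref{teobeau} is also the common scale of the atoms of $\mathcal{P}_{n+1}(c_0)$ there'' — is false for rotation numbers of unbounded type. The real bounds assert only that \emph{adjacent} atoms of $\mathcal{P}_{n+1}(c_0)$ are comparable, not that all $a_{n+1}+1$ atoms inside $I_n(c_0)$ share a common scale. By Yoccoz's Lemma \ref{lemyoccoz}, the fundamental domains $\Delta_j$ near the middle of $I_n(c_0)$ have length of order $|I_n(c_0)|/a_{n+1}^2$, while those near the ends have length of order $|I_n(c_0)|$; their ratio is of order $a_{n+1}^2$, which is unbounded. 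The lemma asks only that each $\Delta\in\widehat{\mathcal{P}}_{n+1}$ be comparable to the \emph{particular} atoms of $\mathcal{P}_{n+1}(c_0)$ that it meets, and the paper proves exactly this: for the atoms $\widehat{\Delta}_n^j$ one uses that $\widehat{\Delta}_n^j$ is trapped between at most two standard fundamental domains while those two are trapped between $\widehat{\Delta}_n^{j-1}\cup\widehat{\Delta}_n^{j}\cup\widehat{\Delta}_n^{j+1}$, and then Corollary \ref{symmetricintervals} gives the comparability. Your reformulation would work only in the bounded-type case.

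Your ``main obstacle'' in the non-two-bridges case is, by contrast, not an obstacle at all: the replacement procedure removes the vertex $w$ of $\mathcal{P}_{n+1}(c_0)$ \emph{closest} to $v$, so if $v\in[w,w']$ then $|v-w|\leq\tfrac12|[w,w']|$, hence the new atom $[v,w']$ has at least half the length of $[w,w']$, while the other new atom contains the old neighbour of $w$ and is contained in that neighbour together with $[w,w']$. Both are therefore comparable to standard atoms by Theorem \ref{teobeau} and Remark \ref{remcomp}, with no need to track $\mathfrak{c}_k$ through intermediate levels. This is exactly how the paper dispatches that case in one sentence.
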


\begin{proof}[Proof of Lemma \ref{lemaintersect}] We fix some $n$ and we prove the desired comparability for intersecting atoms of $\widehat{\mathcal{P}}_{n+1}$ and $\mathcal{P}_{n+1}(c_0)$ respectively. For a two-bridges level $n$ (for $f$ at $c_0$), we have three different types of atoms of $\widehat{\mathcal{P}}_{n+1}$:
\begin{itemize}
\item The following atoms of $\widehat{\mathcal{P}}_{n+1}$ also belong to $\mathcal{P}_{n+1}(c_0)$:
\begin{align*}
&f^{i}\big(I_{n+1}(c_0)\big):\,0 \leq i \leq q_{n}-1\,,\\
&f^{i}\big(I_{n+2}(c_0)\big):\,0 \leq i \leq q_{n+1}-1\,,\\
&f^{i}\big(\Delta_{a_{n+1}}^{-j}\big):\,0 \leq j \leq \ell-1\,,\,0 \leq i \leq q_{n+1}-1\,,\\
&f^{i}\big(\Delta_1^j\big):\,0 \leq j \leq r-1\,,\,0 \leq i \leq q_{n+1}-1\,.
\end{align*}
\item For the atoms $f^{i}(\Delta_{L}^{n+1})$ and $f^{i}(\Delta_{R}^{n+1})$, with $0 \leq i \leq q_{n+1}-1$, we just note the following: both $f^{i}\big(\Delta_{a_{n+1}}^{-\ell}\big)$ and $f^{i}\big(\Delta_{1}^{r}\big)$ belong to $\mathcal{P}_{n+1}(c_0)$ for any $0 \leq i \leq q_{n+1}-1$, and then we can apply Remark \ref{remcomp} with $I=f^{i}\big(\Delta_{a_{n+1}}^{-\ell}\big)$ and $J=f^{i}(\Delta_{L}^{n+1})$, and also with $I=f^{i}\big(\Delta_{1}^{r}\big)$ and $J=f^{i}(\Delta_{R}^{n+1})$.
\item For any $1-r \leq j \leq \ell-1$ and any $0 \leq i \leq q_{n+1}-1$, the interval $f^{i}(\widehat{\Delta}_n^{j})$ intersects at most two atoms of $\mathcal{P}_{n+1}(c_0)$, say $I$ and $J$, both being consecutive fundamental domains of $f^{q_{n+1}}$. Since $f^{i}(\widehat{\Delta}_n^{j}) \subseteq I \cup J \subseteq f^{i}(\widehat{\Delta}_n^{j+1}) \cup f^{i}(\widehat{\Delta}_n^{j}) \cup f^{i}(\widehat{\Delta}_n^{j-1})$, we are done by Corollary \ref{symmetricintervals}.
\end{itemize}Therefore, we have comparability when $n$ is a two-bridges level. Finally, by the real bounds, if $n$ is \emph{not} a two-bridges level, then the replacement procedure described above (to build the auxiliary partition $\widehat{\mathcal{P}}_{n+1}$ from the standard partition $\mathcal{P}_{n+1}(c_0)$) creates neither small nor big intervals since, given a missing vertex of $\widehat{\mathcal{P}}_{n}$, we remove from $\mathcal{P}_{n+1}(c_0)$ its closest vertex. Therefore, we also have comparability when $n$ is not a two-bridges level.
\end{proof}

As proved in \cite[Lemma 4.1]{EdF}, any two intersecting atoms belonging to the same level of the standard dynamical partitions of two distinct critical points are comparable. When combined with Lemma \ref{lemaintersect}, this gives us the following fact, that will be mentioned in Section \ref{SecProofThmB} (during the proof of Lemma \ref{ineqbeta}).

\begin{corollary}\label{corointersect} If $\Delta\in\widehat{\mathcal{P}}_n$ and $\Delta'\in \mathcal{P}_n(c_1)$ are two atoms such that $\Delta\cap\Delta'\neq\emptyset$, then $|\Delta|\asymp|\Delta'|$.
\end{corollary}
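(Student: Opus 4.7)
The plan is to use the standard partition $\mathcal{P}_n(c_0)$ as a bridge between $\widehat{\mathcal{P}}_n$ and $\mathcal{P}_n(c_1)$. The two ingredients are already at our disposal: Lemma \ref{lemaintersect}, which compares intersecting atoms of $\widehat{\mathcal{P}}_n$ and $\mathcal{P}_n(c_0)$, and the result \cite[Lemma 4.1]{EdF} quoted right before the corollary, which states that any two intersecting atoms of $\mathcal{P}_n(c_0)$ and $\mathcal{P}_n(c_1)$ are comparable.

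Given $\Delta\in\widehat{\mathcal{P}}_n$ and $\Delta'\in\mathcal{P}_n(c_1)$ with $\Delta\cap\Delta'\neq\emptyset$, pick any point $x\in\Delta\cap\Delta'$ and let $P\in\mathcal{P}_n(c_0)$ be an atom containing $x$ (either $x$ is interior to a unique such atom, or $x$ is a vertex, in which case we pick either adjacent atom). Then $P$ intersects $\Delta$ (at $x$) so Lemma \ref{lemaintersect} gives $|P|\asymp|\Delta|$, and $P$ intersects $\Delta'$ (at $x$) so \cite[Lemma 4.1]{EdF} gives $|P|\asymp|\Delta'|$. Chaining these two comparabilities yields $|\Delta|\asymp|\Delta'|$, as desired.

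Since both comparability constants depend only on $f$ (through the real bounds), the composition is also controlled only by $f$, and no further estimate is needed. There is no genuine obstacle here beyond the bookkeeping of endpoints: one just has to be mildly careful at vertices shared by two partitions, but this is handled by the remark that a boundary point belongs to the closure of two atoms on each side, and the argument above works with either choice of $P$.
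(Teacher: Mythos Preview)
Your proof is correct and follows precisely the approach indicated in the paper: combine Lemma \ref{lemaintersect} with \cite[Lemma 4.1]{EdF}, using an atom of $\mathcal{P}_n(c_0)$ containing a common point as the intermediary. The paper merely states that the corollary follows by combining these two results, and your argument is exactly the intended chaining.
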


In Section \ref{SecProofThmB} we will also use the following immediate consequence of properties \eqref{itemref} and \eqref{itemboundgeom} of the auxiliary partitions.

\begin{corollary}\label{mu3} There exists $\mu\in(0,1)$ such that $|J|\leq\mu|I|$ for all $I\in\widehat{\mathcal{P}}_{n}$ and $J\in\widehat{\mathcal{P}}_{n+1}$ with $J \subsetneq I$.
\end{corollary}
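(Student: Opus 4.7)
The plan is to derive the corollary directly from items \eqref{itemref} and \eqref{itemboundgeom} in the list of properties of the auxiliary partitions, no other input being necessary.

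First, I would use the refinement property \eqref{itemref}: since $J\in\widehat{\mathcal{P}}_{n+1}$ is strictly contained in $I\in\widehat{\mathcal{P}}_n$, the interval $I$ must coincide with the disjoint union (modulo endpoints) of at least two atoms of $\widehat{\mathcal{P}}_{n+1}$, one of them being $J$ itself. In particular, there exists an atom $J'\in\widehat{\mathcal{P}}_{n+1}$ with $J'\subset I$ and $J'$ adjacent to $J$ inside $\widehat{\mathcal{P}}_{n+1}$.

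Next, I would invoke the bounded geometry property \eqref{itemboundgeom}: there is a constant $C>1$, depending only on $f$, such that any two adjacent atoms of $\widehat{\mathcal{P}}_{n+1}$ are comparable with constant $C$; applied to $J$ and $J'$ this gives $|J'|\geq |J|/C$. Since $J$ and $J'$ are disjoint subsets of $I$, we obtain
\[
|I|\;\geq\;|J|+|J'|\;\geq\;|J|\left(1+\tfrac{1}{C}\right)\;=\;\tfrac{C+1}{C}\,|J|,
\]
so that $|J|\leq \mu\,|I|$ with $\mu=C/(C+1)\in(0,1)$, independent of $n$, $I$ and $J$. There is no real obstacle here: the only care needed is to make sure the refinement step genuinely produces at least two children, which is precisely what is guaranteed by \eqref{itemref} (note the clause \emph{``at least two intervals of $\widehat{\mathcal{P}}_{n+2}$''} for the case when an atom $I$ of $\widehat{\mathcal{P}}_n$ happens to also belong to $\widehat{\mathcal{P}}_{n+1}$, which may require a small adjustment: the inequality $|J|\leq \mu|I|$ should then be applied at one level further down, which does not affect the choice of $\mu$).
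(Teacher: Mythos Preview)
Your proof is correct and follows exactly the route indicated in the paper, which states (without further detail) that the corollary is an immediate consequence of properties \eqref{itemref} and \eqref{itemboundgeom} of the auxiliary partitions. One small remark: your closing parenthetical is unnecessary, since the hypothesis $J\subsetneq I$ with $J\in\widehat{\mathcal{P}}_{n+1}$ already excludes the possibility that $I$ itself belongs to $\widehat{\mathcal{P}}_{n+1}$ (atoms of a partition cannot be properly nested), so the first alternative in \eqref{itemref} applies directly and no adjustment is needed.
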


\subsubsection{A combinatorial remark} We finish Section \ref{secinter} with Lemma \ref{lematrayectoria} below, which is an adaptation of \cite[Lemma 4.9]{dFdM} to the auxiliary partitions, that is going to be crucial in Section \ref{SecProofThmB}, during the proof of Lemma \ref{keylemma1}. Let us point out first the following fact, that follows straightforward from our construction.

\begin{lemma} Let $n\in\nt$ and $y \in J_n(c_0) \setminus J_{n+1}(c_0)$. Then the following holds.
\begin{itemize}
\item If\, $n$ is not a two-bridges level, there exist $x \in J_{n+1}(c_0)$, $\sigma\in\{0,1\}$ and $k\in\Z$, with $|k| \leq \lceil a_{n+1}/2 \rceil$, such that $y=f^{kq_{n+1}+\sigma q_{n}}(x)$.
\item If\, $n$ is a two-bridges level, there exist $x \in J_{n+1}(c_0)\cup\widehat{\Delta}_n$, $\sigma\in\{0,1\}$ and $k\in\Z$, with $|k|\leq\ell(n)$ and $|k| \leq r(n)$, such that $y=f^{kq_{n+1}+\sigma q_{n}}(x)$.
\end{itemize}
\end{lemma}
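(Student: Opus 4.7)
The plan is to walk through the construction of the partition of $J_n(c_0)\setminus J_{n+1}(c_0) = I_n(c_0)\setminus I_{n+2}(c_0)$ from Section \ref{secbuildaux} and exhibit, for each atom, a short way to reach it from a source interval inside $J_{n+1}(c_0)\cup\widehat{\Delta}_n$. Two elementary moves do all the work. \emph{Move A} uses the very definition $\Delta_j = f^{(j-1)q_{n+1}+q_n}(I_{n+1}(c_0))$ to write $y\in\Delta_j$ as $y=f^{(j-1)q_{n+1}+q_n}(x)$ with $x\in I_{n+1}(c_0)$, so $\sigma=1$ and $k=j-1$. \emph{Move B} uses the identity $f^{q_{n+1}}(\Delta_{a_{n+1}}) = f^{q_{n+2}}(I_{n+1}(c_0))$ together with the basic observation (immediate from Yoccoz's conjugacy with the rotation) that the image $f^{q_{n+2}}(I_{n+1}(c_0))$ is an interval straddling $c_0$ and contained in $I_{n+2}(c_0)\cup I_{n+1}(c_0)=J_{n+1}(c_0)$; hence any $y\in\Delta_j$ also admits the representation $y=f^{-(a_{n+1}-j+1)q_{n+1}}(x)$ with $x\in J_{n+1}(c_0)$, $\sigma=0$ and $k=-(a_{n+1}-j+1)$.

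For the non-two-bridges case I would apply Move A when $j\le\lceil a_{n+1}/2\rceil+1$ (giving $|k|=j-1\le\lceil a_{n+1}/2\rceil$) and Move B when $j\ge\lfloor a_{n+1}/2\rfloor+1$ (giving $|k|=a_{n+1}-j+1\le\lceil a_{n+1}/2\rceil$). Since $\lfloor a_{n+1}/2\rfloor+1\le\lceil a_{n+1}/2\rceil+1$, every $j\in\{1,\ldots,a_{n+1}\}$ falls in at least one of these ranges, which finishes this case.

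For the two-bridges case I would proceed atom-by-atom through the partition $\widehat{\mathcal{P}}_{n+1}|_{I_n(c_0)}$ of Section \ref{secbuildaux}. Atoms of the form $\Delta_{a_{n+1}}^{-j}$ with $0\le j\le\ell(n)-1$, together with $\Delta_L^{n+1}$, are reached by Move B applied to the iterate $f^{(j+1)q_{n+1}}(y)\in J_{n+1}(c_0)$, giving $\sigma=0$ and $|k|\le\ell(n)$; symmetrically, atoms $\Delta_1^{j}$ with $0\le j\le r(n)-1$ together with $\Delta_R^{n+1}$ are reached by Move A with source in $I_{n+1}(c_0)$, giving $\sigma=1$ and $|k|\le r(n)$. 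The middle atoms $\widehat{\Delta}_n^{j}$ for $1-r(n)\le j\le\ell(n)-1$ are, by construction, the $f^{jq_{n+1}}$-iterates of $\widehat{\Delta}_n$, so $x=f^{-jq_{n+1}}(y)\in\widehat{\Delta}_n$ yields $\sigma=0$ and $k=j$. The two bounds $|k|\le\ell(n)$ and $|k|\le r(n)$ in the statement correspond to the two possible directions of iteration, ``leftward'' through iterates of $\Delta_{a_{n+1}}$ (bounded by $\ell(n)$) or ``rightward'' through iterates of $\Delta_1$ (bounded by $r(n)$).

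The main obstacle will be the precise bookkeeping at the transition atoms $\Delta_L^{n+1}$ and $\Delta_R^{n+1}$: by their very definition these are the first overlaps between a backward iterate of $\Delta_{a_{n+1}}$ (resp.\ a forward iterate of $\Delta_1$) and a corresponding translate of $\widehat{\Delta}_n$, and one must invoke the minimality built into the definitions of $\ell(n)$ and $r(n)$ to certify that Move B (resp.\ Move A) still carries them within the announced bound. Once that verification is in place, the remaining atoms follow immediately from Moves A and B together with the fact that $\widehat{\mathcal{P}}_{n+1}$ refines the standard fundamental-domain partition $\{\Delta_j\}$ of $I_n(c_0)\setminus I_{n+2}(c_0)$.
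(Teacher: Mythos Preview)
Your approach---unpacking the construction of Section~\ref{secbuildaux} via the two moves $y=f^{(j-1)q_{n+1}+q_n}(x)$ with $x\in I_{n+1}(c_0)$ and $y=f^{-(a_{n+1}-j+1)q_{n+1}}(x)$ with $x\in f^{q_{n+2}}(I_{n+1}(c_0))\subset J_{n+1}(c_0)$---is exactly what the paper has in mind (it gives no proof beyond ``follows straightforward from our construction''), and your atom-by-atom walk through $\widehat{\mathcal{P}}_{n+1}|_{I_n(c_0)}$ is correct.

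Two remarks on the bookkeeping you flag. First, your disjunctive reading of ``$|k|\le\ell(n)$ and $|k|\le r(n)$'' is the right one: the literal conjunction $|k|\le\min\{\ell(n),r(n)\}$ cannot hold for all $y$ (take $y\in\Delta_1^{r-1}$ when $r(n)>\ell(n)+1$; every route from $J_{n+1}(c_0)\cup\widehat{\Delta}_n$ to $y$ needs $|k|\ge r(n)-1$), and only $|k|\le a_{n+1}$ is ever used downstream in Lemma~\ref{keylemma1}. Second, the transition-atom obstacle you identify is genuine and does not close via ``minimality'' alone: for $y\in\Delta_{a_{n+1}}^{-\ell}\setminus\widehat{\Delta}_n^{\ell}\subset\Delta_L^{n+1}$, Move B gives $|k|=\ell(n)+1$, and under the paper's standing assumption $\Delta_{a_{n+1}}^{-\ell}\setminus\widehat{\Delta}_n^{\ell}\subseteq\widehat{\Delta}_n^{\ell+1}$ the $\widehat{\Delta}_n$-source also gives $|k|=\ell(n)+1$. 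So the stated bound is off by one at this atom---a harmless imprecision in the paper, not a defect in your argument.
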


By induction we obtain the following description (for more details, see \cite[pages 363-364]{dFdM}).

\begin{lemma}\label{lematrayectoria} Let $n,p \in \nt$, and let $v$ be a vertex of $\widehat{\mathcal{P}}_{n+p}$ contained in $J_n(c_0)$. Then there exist $L\in\{1,...,p\}$ and $n \leq m_1 < ... < m_L \leq n+p$ such that\, $v = \varphi_1 \circ \cdots \circ \varphi_L(x)$\,, where:
\begin{itemize}
\item For each $j\in\{1,...,L\}$ we have $\varphi_j= f^{k_jq_{m_j+1}+\sigma_jq_{m_j}}$ for some $\sigma_j\in\{0,1\}$ and $k_j\in\Z$, where each $k_j$ either satisfies $|k_j|\leq\ell(m_j)$ and $|k_j| \leq r(m_j)$ or $|k_j| \leq \lceil a_{m_j+1}/2 \rceil$, depending on whether $m_j$ is or is not a two-bridges level (for $f$ at $c_0$).
\item For each $j\in\{1, ... , L-1\}$, the point $\varphi_{j+1} \circ \cdots \circ \varphi_L(x)$ either belongs to $J_{m_j+1}(c_0)\cup\widehat{\Delta}_{m_j}$ or to $J_{m_j+1}(c_0)$, depending on whether $m_j$ is or is not a two-bridges level.
\item There exists $m\in\{m_L,...,n+p\}$ such that the initial condition $x$ either belongs to $\big\{c_0\,,\,f^{q_{m+2}}(c_0)\,,\,\mathfrak{c}_{m}\big\}$ or to $\big\{c_0\,,\,f^{q_{m+2}}(c_0)\big\}$, depending on whether $m$ is or is not a two-bridges level.
\end{itemize}
\end{lemma}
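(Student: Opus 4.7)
\medskip

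The plan is to prove Lemma \ref{lematrayectoria} by induction on $p\in\nt$, using the preceding one-step lemma as the engine that advances the level by one at each stage. For the base case $p=1$, a vertex $v$ of $\widehat{\mathcal{P}}_{n+1}$ lying in $J_n(c_0)$ either already lies in $J_{n+1}(c_0)$, in which case (by inspection of the construction of $\widehat{\mathcal{P}}_{n+1}$ in Section \ref{secbuildaux}) it must be one of the points $c_0$, $f^{q_{n+1}}(c_0)$, or $f^{q_{n+2}}(c_0)$, and the statement holds with $L=1$, $k_1=0$, $\sigma_1\in\{0,1\}$ chosen appropriately (or collapses to just the initial condition); or else $v\in J_n(c_0)\setminus J_{n+1}(c_0)$, and the previous unnamed lemma furnishes $x\in J_{n+1}(c_0)\cup\widehat{\Delta}_n$, $\sigma_1\in\{0,1\}$ and $k_1\in\Z$ with the stated bound on $|k_1|$ such that $v=f^{k_1q_{n+1}+\sigma_1q_n}(x)$. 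Here the initial condition $x$ must itself be a vertex of $\widehat{\mathcal{P}}_{n+1}$ (since vertices map to vertices under the dynamics inside a fundamental domain), which together with the Item \eqref{itemfree} property of the auxiliary partitions (the free critical point $\mathfrak{c}_n$ being a vertex whenever $n$ is a two-bridges level) forces $x\in\{c_0,f^{q_{n+2}}(c_0),\mathfrak{c}_n\}$ (or $\{c_0,f^{q_{n+2}}(c_0)\}$ if $n$ is not a two-bridges level), giving $L=1$, $m_1=m=n$.

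For the inductive step, assume the conclusion for all smaller values of $p$ and take $v$ a vertex of $\widehat{\mathcal{P}}_{n+p}$ in $J_n(c_0)$. If $v\in J_{n+1}(c_0)$, then $v$ is automatically a vertex of $\widehat{\mathcal{P}}_{(n+1)+(p-1)}$ inside $J_{n+1}(c_0)$, and the inductive hypothesis applied with the pair $(n+1,p-1)$ gives the desired decomposition immediately. Otherwise $v\in J_n(c_0)\setminus J_{n+1}(c_0)$, and the one-step lemma produces $\varphi_1=f^{k_1q_{n+1}+\sigma_1q_n}$ (with $|k_1|$ satisfying the required bound depending on whether $n$ is a two-bridges level) and a point $x_1=\varphi_1^{-1}(v)\in J_{n+1}(c_0)\cup\widehat{\Delta}_n$. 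The key observation is that $x_1$ is again a vertex of $\widehat{\mathcal{P}}_{n+p}$: this is because the construction of $\widehat{\mathcal{P}}_{n+p}$ inside $I_n(c_0)$ is assembled from $f^{q_{n+1}}$-images of the partition of $I_{n+1}(c_0)\cup\widehat{\Delta}_n$, so pulling back by $\varphi_1$ sends a vertex to a vertex. Taking $m_1=n$, the inductive hypothesis applied to $x_1$ at level $n+1$ with parameter $p-1$ then yields the remaining composition $\varphi_2\circ\cdots\circ\varphi_L$ with $n+1\leq m_2<\cdots<m_L\leq n+p$, the compatible intermediate-orbit conditions, and the correct form of the initial condition $x$.

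The main technical point that requires care, rather than being genuinely difficult, is the bookkeeping around two-bridges levels: when $m_j$ is a two-bridges level, the intermediate point $\varphi_{j+1}\circ\cdots\circ\varphi_L(x)$ is allowed to sit in $\widehat{\Delta}_{m_j}$ rather than in $J_{m_j+1}(c_0)$, and this extra freedom must be threaded through both the one-step lemma's output and the inductive hypothesis. Likewise, the initial condition's possible value $\mathfrak{c}_m$ occurs precisely when the recursion terminates (at the deepest level $m_L$ or one step beyond it) inside an interval $\widehat{\Delta}_m$ whose endpoints include $\mathfrak{c}_m$. Once the single-step lemma is taken as a black box, the induction is then completely formal, and the argument essentially mirrors the one given by de Faria and de Melo in \cite[pages 363-364]{dFdM}, with the sole modification being the additional bookkeeping of the free critical point $\mathfrak{c}_n$ as a legitimate initial condition and vertex at two-bridges levels.
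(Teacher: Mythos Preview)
Your proposal is correct and takes essentially the same approach as the paper: both proceed by induction on $p$ using the one-step lemma immediately preceding Lemma~\ref{lematrayectoria}, and both point to \cite[pages 363--364]{dFdM} as the template, with the only new ingredient being the bookkeeping of the free critical point $\mathfrak{c}_n$ at two-bridges levels. In fact your write-up is considerably more detailed than the paper's, which consists of a single sentence; the one point you flag but do not fully spell out --- how the induction proceeds when $x_1$ lands in $\widehat{\Delta}_n$ rather than $J_{n+1}(c_0)$ --- is likewise left to the reader in the paper.
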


As already mentioned, the auxiliary partitions do not determine a fine grid, unless the rotation number of $f$ is of bounded type. In the next subsection we finally construct a fine grid for $f$.

\subsection{Building a fine grid}\label{sec:constructingfinegrid} In the remainder of Section \ref{sec:anewpartition} we adapt the construction of \cite[Section 4.3]{dFdM} to our setting, following the exposition in \cite{EdF}. More precisely, in sections \ref{subsecinter}, \ref{subsecalm} and \ref{secbalanponte} we follow \cite[Sections 4.4\,--\,4.6]{EdF}, while in section \ref{secfinalFG} we follow \cite[Section 5.2]{EdF}.

\subsubsection{Intermediate partitions}\label{subsecinter}

\begin{definition}\label{defbridge} We define \emph{bridges} for the auxiliary partitions as follows.
\begin{itemize}
\item If\, $n$ is a two-bridges level for $f$ at $c_0$ (see Definition \ref{deftwobridgeslevel}), then the bridges of\, $\widehat{\mathcal{P}}_{n+1}|_{I_n(c_0)}$  are the intervals $\widehat{G}_1$ and $\widehat{G}_2$ given by$$\widehat{G}_1= \bigcup_{j=1}^{r-1} \Delta_1^j  \, \cup \, \Delta_{R}^{n+1} \, \cup  \bigcup_{j=2}^{r-1}\widehat{\Delta}_n^{-j}\quad\mbox{and}\quad\widehat{G}_2= \bigcup_{j=1}^{\ell-1} \widehat{\Delta}_n^{j}  \, \cup \, \Delta_{L}^{n+1} \, \cup  \bigcup_{j=0}^{\ell-1}\Delta^{-j}_{a_{n+1}}\,.$$
\item If\, $a_{n+1} \geq 23$ but $n$ is not a two-bridges level, note that all intervals $\Delta_j$ with $j\in\{12,...,a_{n+1}-11\}$ belong to $\widehat{\mathcal{P}}_{n+1}$, since the replacement procedure described in the previous section will not affect their vertices. In this case, we consider a single bridge of\, $\widehat{\mathcal{P}}_{n+1}|_{I_n(c_0)}$, which is the interval $\widehat{G}_1$ given by$$\widehat{G}_1=\bigcup_{j=12}^{a_{n+1}-11}\Delta_j\,.$$
\end{itemize}
In both cases, the bridges of\, $\widehat{\mathcal{P}}_{n+1}$ are the iterates, between $0$ and $q_{n+1}-1$, of the bridges of $\widehat{\mathcal{P}}_{n+1}|_{I_n(c_0)}$. Finally, if $a_{n+1} \leq 22$, no bridges are defined for $\widehat{\mathcal{P}}_{n+1}$.
\end{definition}
   
   \begin{figure}[!ht]
  \centering
  \psfrag{In+2}[][][1]{$I_{n+2}$} 
  \psfrag{F}[][][1]{$\widehat{G}_2$}
  \psfrag{c}[][][1]{$\mathfrak{c}_{n}$}
  \psfrag{A}[][][1]{$\widehat{\Delta}_n$} 
  \psfrag{B}[][][1]{$\widehat{\Delta}_n^{-1}$}
  \psfrag{G}[][][1]{$\widehat{G}_1$}
  \psfrag{Delta}[][][1]{$\Delta_1$}
  \includegraphics[width=4.8in]{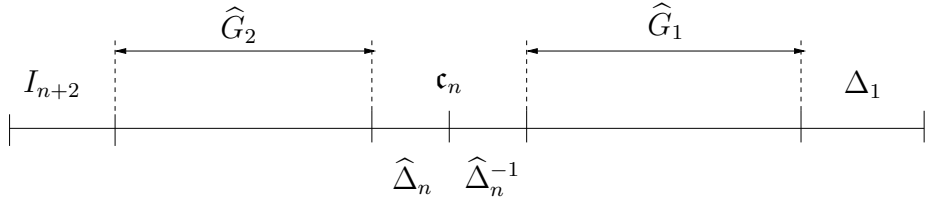}
  \caption{\label{figurahatG1hatG2} The bridges $\widehat{G}_1$ and $\widehat{G}_2$ contained in $I_n(c_0)$, for a two-bridges level $n$.}
  \end{figure}

Let $I$ be a bridge of $\widehat{\mathcal{P}}_{n+1}$, that is, $I$ is the union of a certain number of adjacent intervals belonging to $\widehat{\mathcal{P}}_{n+1}$. Following the terminology used in \cite{dFdM} and \cite{EdF}, we say that $I$ is a
\begin{enumerate}
\item[(a)] {\it Regular bridge\/}, if the bridge is formed by less than $1000$ intervals of $\widehat{\mathcal{P}}_{n+1}$.
\item[(b)] {\it Saddle-node bridge\/}, if the bridge is formed by at least $1000$ intervals of $\widehat{\mathcal{P}}_{n+1}$.
\end{enumerate}Any atom of $\widehat{\mathcal{P}}_{n+1}$ disjoint from all bridges will be called a \emph{regular interval}. In particular, if\, $a_{n+1} \leq 22$, all intervals of $\widehat{\mathcal{P}}_{n+1}$ are regular, since no bridges were defined.

Finally, the \emph{intermediate} partition $\widetilde{\mathcal{P}}_{n+1}$ is defined as the union of all regular intervals and all bridges (regular or saddle-node) of the auxiliary partition $\widehat{\mathcal{P}}_{n+1}$ (note that $\widetilde{\mathcal{P}}_{n+1}$ is finer than $\widehat{\mathcal{P}}_n$ but coarser than $\widehat{\mathcal{P}}_{n+1}$, which is why we call it intermediate).

\begin{remark}\label{max24} Any atom of $\widehat{\mathcal{P}}_n$ is the union of at most $48$ atoms of $\widetilde{\mathcal{P}}_{n+1}$. This fact will be mentioned in Section \ref{secfinalFG} below, during the proof of Proposition \ref{gridpprop}.
\end{remark}

The following lemma shows that all intervals of $\widetilde{\mathcal{P}}_{n+1}$ contained in the same atom of $\widehat{\mathcal{P}}_n$ are pairwise comparable.

\begin{lemma}\label{regularandbridges} Any interval of the intermediate partition $\widetilde{\mathcal{P}}_{n+1}$ is comparable to the interval of the auxiliary partition $\widehat{\mathcal{P}}_n$ that contains it.
\end{lemma}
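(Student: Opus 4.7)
The plan is to prove $|J|\asymp|I|$ for every $J\in\widetilde{\mathcal{P}}_{n+1}$ contained in some $I\in\widehat{\mathcal{P}}_n$, by a case analysis that combines three ingredients: Lemma~\ref{lemaintersect} (each atom of $\widehat{\mathcal{P}}_{n+1}$ is comparable to any intersecting atom of $\mathcal{P}_{n+1}(c_0)$), the real bounds (Theorem~\ref{teobeau} together with Remark~\ref{remcomp} and Corollary~\ref{symmetricintervals}) and Remark~\ref{max24} (at most $48$ atoms of $\widetilde{\mathcal{P}}_{n+1}$ fit inside each atom of $\widehat{\mathcal{P}}_n$).

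By the iterate-extension built into Section~\ref{secbuildaux}, every atom $I\in\widehat{\mathcal{P}}_n$ is (modulo replacement vertices) either $I_n(c_0)$, $I_{n+1}(c_0)$, or obtained from one of these by a power of $f$ whose orbit avoids the critical set. Since the same iterate-extension simultaneously produces the refinement of $I$ by $\widetilde{\mathcal{P}}_{n+1}$, any length-ratio estimate on $I_n(c_0)$ transports to the iterate via the bounded-distortion consequences of the real bounds. Hence it suffices to handle $I=I_n(c_0)$ (the case $I=I_{n+1}(c_0)$ being entirely analogous), and to split according to Section~\ref{secinter} into the three sub-cases: $a_{n+1}\leq 22$ (no bridges are defined), $a_{n+1}\geq 23$ with $n$ not a two-bridges level (the single bridge $\widehat{G}_1$), or $n$ a two-bridges level (the two bridges $\widehat{G}_1$ and $\widehat{G}_2$).

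In each sub-case, every $J\in\widetilde{\mathcal{P}}_{n+1}|_{I_n(c_0)}$ is either a regular interval (a single atom of $\widehat{\mathcal{P}}_{n+1}$) or a bridge (an explicit finite union of adjacent such atoms). For a regular interval, Lemma~\ref{lemaintersect} compares $|J|$ to the length of an intersecting atom of $\mathcal{P}_{n+1}(c_0)$, and Theorem~\ref{teobeau} then gives pairwise comparability with any adjacent fundamental domain of $f^{q_{n+1}}$ inside $I_n(c_0)$. For a bridge, the same inputs applied to each constituent show that all constituents are pairwise comparable, so $|J|$ is controlled by the number of its constituents times a common fundamental-domain length. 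Combining these pairwise estimates over the at-most-$48$ atoms of $\widetilde{\mathcal{P}}_{n+1}|_{I_n(c_0)}$ (Remark~\ref{max24}), and using that they partition $I_n(c_0)$, yields mutual comparability with a constant depending only on the real-bounds constant of Theorem~\ref{teobeau}; hence $|J|\asymp|I_n(c_0)|$ for each such $J$.

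The main obstacle will be the bookkeeping in the two-bridges sub-case, where the description of $\widehat{\mathcal{P}}_{n+1}|_{I_n(c_0)}$ involves seven distinct types of atoms ($I_{n+2}(c_0)$, the iterates $\Delta_{a_{n+1}}^{-j}$ and $\Delta_1^j$, the buffer intervals $\Delta_L^{n+1}$ and $\Delta_R^{n+1}$, the iterates $\widehat{\Delta}_n^j$, and the bridges $\widehat{G}_1,\widehat{G}_2$). The only non-immediate comparisons are those involving the buffer intervals $\Delta_L^{n+1}$, $\Delta_R^{n+1}$, where Remark~\ref{remcomp} has to be invoked (since each buffer sits inside the union of a $\mathcal{P}_{n+1}(c_0)$-atom and its two neighbours), and those involving the symmetric iterates $\widehat{\Delta}_n^{\pm j}$ around the free critical point $\mathfrak{c}_n$, for which Corollary~\ref{symmetricintervals} is tailor-made; once these two observations are in place, the case analysis proceeds uniformly.
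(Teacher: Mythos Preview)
There is a genuine gap in your treatment of the two-bridges case, concentrated at the regular atoms $\widehat{\Delta}_n$ and $\widehat{\Delta}_n^{-1}$ around the free critical point $\mathfrak{c}_n$. Your argument for these relies on Corollary~\ref{symmetricintervals}, but that corollary only tells you $|\widehat{\Delta}_n|\asymp|\widehat{\Delta}_n^{-1}|$; it says nothing about comparability with $|I_n(c_0)|$. Likewise, Lemma~\ref{lemaintersect} only compares $\widehat{\Delta}_n$ to the $\mathcal{P}_{n+1}(c_0)$-atom $\Delta_j$ containing $\mathfrak{c}_n$, and the real bounds only compare $\Delta_j$ to its immediate neighbours. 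Since $\mathfrak{c}_n$ can sit in $\Delta_j$ with $j$ anywhere between $11$ and $a_{n+1}-10$, chaining adjacent comparisons produces a constant of order $C^{a_{n+1}}$, which is not uniform. Your attempt to close this with Remark~\ref{max24} (at most $48$ atoms of $\widetilde{\mathcal{P}}_{n+1}$ partition $I_n(c_0)$) does not help either: knowing that six intervals partition $I_n(c_0)$ gives a lower bound only for the \emph{largest} one, not for each of them. Relatedly, your sentence ``all constituents [of a bridge] are pairwise comparable'' is simply false for saddle-node bridges: by Yoccoz's Lemma~\ref{lemyoccoz} their fundamental domains have lengths ranging from $\asymp|I|$ down to $\asymp|I|/\ell^2$.

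What is missing is precisely the content of Lemma~\ref{lemmaiteratescritspots}: the fundamental domain of $f^{q_{n+1}}$ that contains the free critical point $\mathfrak{c}_n$ is comparable to all of $I_n(c_0)$ (and the same for every iterate $f^i$, $0\le i\le q_{n+1}-1$). This is a genuine dynamical fact---near a critical point of the return map the displacement $f^{q_{n+1}}(x)-x$ is small, forcing the fundamental domain there to be wide---and it is exactly what the paper invokes to handle $\widehat{\Delta}_n$ and $\widehat{\Delta}_n^{-1}$. Once you have it, the bridges follow easily (each bridge contains an atom adjacent to $\Delta_1$, $\Delta_{a_{n+1}}$, or $\widehat{\Delta}_n^{\pm 1}$, all of which are now known to be $\asymp|I_n(c_0)|$), but without it the argument cannot be completed with the tools you list.
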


In the proof of Lemma \ref{regularandbridges} we will use the following fact, which is \cite[Lemma 4.2]{dFG20}.

\begin{lemma}\label{lemmaiteratescritspots} Let $n$ be a two-bridges level, and let $j\in\{1,...,a_{n+1}\}$ be such that the interval $\Delta_j=f^{(j-1)q_{n+1}+q_n}\big(I_{n+1}(c_0)\big)\subset I_n(c_0)$ contains the free critical point $\mathfrak{c}_n$ of $f^{q_{n+1}}$. Then$$\left|f^i\big(\Delta_j\big)\right|\asymp \left|f^i\big(I_n(c_0)\big)\right|\quad\mbox{for all $i\in\{0,...,q_{n+1}-1\}$.}$$
\end{lemma}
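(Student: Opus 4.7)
The plan is to first establish the initial comparability $|\Delta_j|\asymp|I_n(c_0)|$ (the case $i=0$), and then propagate it to all iterates $i\in\{0,\ldots,q_{n+1}-1\}$ by distortion arguments. For the initial step, let $s\in\{0,1,\ldots,q_{n+1}-1\}$ be the unique iterate satisfying $f^{s}(\mathfrak{c}_{n})=c_{1}$; such $s$ exists and is unique because $\mathfrak{c}_{n}$ is, by construction, the critical point of the first return map $f^{q_{n+1}}|_{I_{n}(c_0)}$ arising from the passage of the orbit through $c_{1}$, and $I_{n}(c_{0})$ is a fundamental domain of that return. Then both atoms $f^{s}(\Delta_{j})\in\mathcal{P}_{n+1}(c_{0})$ and $f^{s}(I_{n}(c_{0}))\in\mathcal{P}_{n}(c_{0})$ contain the critical point $c_{1}$, with $f^{s}(\Delta_{j})\subset f^{s}(I_{n}(c_0))$. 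By \cite[Lemma 4.1]{EdF} (intersecting atoms at the same level of the standard partitions at the two critical points are comparable), combined with the real bounds (Theorem \ref{teobeau}) at $c_{1}$, we obtain
\[
|f^{s}(I_{n}(c_{0}))|\ \asymp\ |I_{n}(c_{1})|\ \asymp\ |I_{n+1}(c_{1})|\ \asymp\ |f^{s}(\Delta_{j})|.
\]

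Next, I pull this comparability back to $i=0$ using bounded distortion of $f^{s}$. Because $n$ is a two-bridges level, we have $11\leq j\leq a_{n+1}-10$, so $\Delta_{j}$ sits in the central region of $I_{n}(c_{0})$ with at least ten fundamental domains of $f^{q_{n+1}}$ on either side. Define the slightly enlarged interval $U=\bigcup_{k=j-5}^{j+5}\Delta_{k}$; by the real bounds, $|U|\asymp|\Delta_{j}|$ and $U$ has definite Koebe space on both sides inside $I_{n}(c_{0})$. The forward orbit $\{f^{i}(U)\}_{i=0}^{s-1}$ avoids both $c_{0}$ and $c_{1}$ in its closure: the only point of $I_{n}(c_{0})$ whose orbit reaches $c_{1}$ within $s$ iterates is $\mathfrak{c}_{n}$ itself (and it does so precisely at iterate $s$, not earlier), while $c_{0}$ cannot appear in any $f^{k}(I_{n}(c_{0}))$ with $0<k<q_{n+1}$ by the fundamental domain property. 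The standard Koebe-type distortion principle for multicritical circle maps therefore gives bounded distortion of $f^{s}|_{U}$, and together with the real bounds on the image partition we conclude $|\Delta_{j}|\asymp|I_{n}(c_{0})|$.

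For the propagation step, fix $i\in\{0,\ldots,q_{n+1}-1\}$. If $i\leq s$, the same distortion argument applied to $f^{i}|_{U}$ (whose orbit $\{f^{k}(U)\}_{k=0}^{i-1}$ also avoids critical points) yields $|f^{i}(\Delta_{j})|\asymp|f^{i}(U)|\asymp|f^{i}(I_{n}(c_{0}))|$ via the real bounds. If $i>s$, I apply the distortion argument to $f^{i-s}|_{f^{s}(U)}$: the orbit $\{f^{k}(f^{s}(U))\}_{k=0}^{i-s-1}$ avoids both critical points (the passage through $c_{1}$ has already occurred at iterate $s$, and $c_{0}$ is again excluded by the fundamental domain property), transporting the already-established comparability $|f^{s}(\Delta_{j})|\asymp|f^{s}(I_{n}(c_{0}))|$ forward to $|f^{i}(\Delta_{j})|\asymp|f^{i}(I_{n}(c_{0}))|$.

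The main obstacle is justifying the distortion estimates for iterates of $f$ in the multicritical setting: one needs a Koebe-type principle that exploits the definite Koebe space flanking $U$ (guaranteed by the two-bridges assumption $11\leq j\leq a_{n+1}-10$ together with the real bounds) and the absence of critical points in the interior of the relevant orbits. Once such distortion bounds are in place with constants depending only on the real-bounds constant $C$ and the criticalities $d_0,d_1$, the argument proceeds as outlined above. The specific choice of ten fundamental domains of cushion on each side is not essential; any uniform lower bound works, and the number $10$ is matched to the setup of Definition \ref{deftwobridgeslevel}.
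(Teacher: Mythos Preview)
The paper does not prove this lemma; it simply invokes \cite[Lemma~4.2]{dFG20}. Your opening move---locating the iterate $s$ with $f^{s}(\mathfrak{c}_n)=c_1$ and using \cite[Lemma~4.1]{EdF} together with the real bounds at $c_1$ to obtain $|f^{s}(\Delta_j)|\asymp|f^{s}(I_n(c_0))|$---is correct and is indeed the heart of the matter.

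The transport step, however, has two problems. First, an outright error: for $i>s$ you assert that the orbit $\{f^{k}(f^{s}(U))\}_{k=0}^{\,i-s-1}$ avoids both critical points, but $c_1=f^{s}(\mathfrak{c}_n)\in f^{s}(\Delta_j)\subset f^{s}(U)$, so already the very first application of $f$ sees the critical point $c_1$; hence $f^{\,i-s}|_{f^{s}(U)}$ is not a diffeomorphism and no Koebe principle applies there. Second, a gap that affects every $i$: bounded distortion of $f^{i}|_{U}$ controls only ratios of sub-intervals of $U$; it says nothing about $|f^{i}(\Delta_j)|/|f^{i}(I_n(c_0))|$, since $I_n(c_0)\not\subset U$. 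When you write $|f^{i}(\Delta_j)|\asymp|f^{i}(U)|\asymp|f^{i}(I_n(c_0))|$ ``via the real bounds'', the second comparability is precisely the statement to be proved---real bounds give comparability of \emph{adjacent} atoms, and there may be on the order of $a_{n+1}$ atoms of $\mathcal{P}_{n+1}(c_0)$ between $U$ and the ends of $I_n(c_0)$. To transport the established comparability from $i=s$ one would need distortion control for $f^{\,\cdot}$ on a domain containing all of $I_n(c_0)$, and the critical point $c_0$ sitting at one endpoint obstructs a naive Koebe argument on such a domain.

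A route that sidesteps the transport (and is closer in spirit to what the cited reference does) is to repeat the comparison with the partitions at $c_1$ at every iterate rather than only at $i=s$. For $i\ge s$ the point $f^{i}(\mathfrak{c}_n)=f^{\,i-s}(c_1)$ is a vertex of both $\mathcal{P}_n(c_1)$ and $\mathcal{P}_{n+1}(c_1)$, so \cite[Lemma~4.1]{EdF} applied at both levels, combined with the real bounds at $c_1$, yields $|f^{i}(\Delta_j)|\asymp|f^{i}(I_n(c_0))|$. The range $0\le i<s$ requires a separate argument (e.g.\ a careful Koebe or cross-ratio estimate for $f^{\,s-i}$ on a domain containing $f^{i}(I_n(c_0))$, where the only possible obstruction is $c_0$ at an endpoint when $i=0$, handled via the power-law local form of $f$ at $c_0$). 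In any case, the eleven-atom window $U$ is too small a stage for the argument you need.
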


\begin{proof}[Proof of Lemma \ref{regularandbridges}] Note first that, by Lemma \ref{lemaintersect}, it is enough to prove that any regular interval or bridge of $\widehat{\mathcal{P}}_{n+1}$ is comparable to the interval of $\mathcal{P}_n(c_0)$ that contains it.
\begin{itemize}
\item If $a_{n+1} \leq 22$, we have $\widetilde{\mathcal{P}}_{n+1}=\widehat{\mathcal{P}}_{n+1}$ and then Lemma \ref{regularandbridges} follows from the real bounds (Theorem \ref{teobeau}) and the fact, already mentioned, that the replacement procedure (to build $\widehat{\mathcal{P}}_{n+1}$ from $\mathcal{P}_{n+1}(c_0)$) creates no small atoms.
\item If $a_{n+1} \geq 23$ but $n$ is not a two-bridges level, we have two different types of elements in $\widetilde{\mathcal{P}}_{n+1}$.
\begin{itemize}
\item By the real bounds, any interval of $\mathcal{P}_{n+1}(c_0)$ of the form $f^i\big(I_{n+1}(c_0)\big)$ with $0\leq i\leq q_{n}-1$, $f^i\big(I_{n+2}(c_0)\big)$ with $0\leq i\leq q_{n+1}-1$, or $f^i(\Delta_j)$ with $j\in\{1,...,11\}\cup\{a_{n+1}-10,...,a_{n+1}\}$ and $0\leq i\leq q_{n+1}-1$ is comparable to the interval of $\mathcal{P}_n(c_0)$ that contains it. Using again that the replacement procedure creates no small atoms, we deduce Lemma \ref{regularandbridges} for any regular interval of $\widehat{\mathcal{P}}_{n+1}$ which is not a bridge.
\item Any bridge of $\widehat{\mathcal{P}}_{n+1}$ contains an interval which is adjacent to one of the intervals considered in the previous item, and we are done by the real bounds and Property \eqref{itemboundgeom} of the auxiliary partitions.
\end{itemize}
\item For a two-bridges level $n$, we have four different types of elements in $\widetilde{\mathcal{P}}_{n+1}$.
\begin{itemize}
\item The case of $I$ being an iterate of $I_{n+1}(c_0)$, $I_{n+2}(c_0)$ or $\Delta_1$ follows from the real bounds.
\item Let $I=f^{i}(\widehat{\Delta}_n)$ for some $i\in\{0, ... ,q_{n+1}-1\}$, and let $\Delta \in\mathcal{P}_{n+1}(c_0)$ be the interval that contains the free critical point $\mathfrak{c}_{n}$. By Lemma \ref{lemmaiteratescritspots} we have$$\left|f^i\big(\Delta\big)\right|\asymp \left|f^i\big(I_n(c_0)\big)\right|\quad\mbox{for all $i\in\{0,1,...,q_{n+1}-1\}$,}$$while by Corollary \ref{symmetricintervals} we have $|f^{i}(\widehat{\Delta}_n)|\asymp |f^{i}(\widehat{\Delta}_n^{-1})|$. Therefore, $|f^{i}(I_n(c_0))|\asymp |f^{i}(\Delta)|\leq |f^{i}(\widehat{\Delta}_n) \cup f^{i}(\widehat{\Delta}_n^{-1})|\asymp|f^{i}(\widehat{\Delta}_n)|$, see Figure \ref{DeltahatDelta}.
\item Just as in the previous item, we have $|f^{i}(\widehat{\Delta}_n^{-1})|\asymp|f^{i}(I_n(c_0))|$ for all $i\in\{0, ... ,q_{n+1}-1\}$.
\item Just as before, if $I$ is a bridge of $\widehat{\mathcal{P}}_{n+1}$, it contains an interval which is adjacent to one of the intervals in the previous items (and then we are done by the real bounds and Property \eqref{itemboundgeom} of the auxiliary partitions).
\end{itemize}
\end{itemize}
\end{proof}

\begin{figure}[!h]
\centering
\psfrag{F}[][]{$f^{q_{n+1}}(\Delta)$} 
\psfrag{D}[][][1]{$\Delta$}
\psfrag{G}[][][1]{$f^{-q_{n+1}}(\Delta)$} 
\psfrag{c}[][][1]{$\mathfrak{c}_{n}$}
\psfrag{A}[][][1]{$\widehat{\Delta}_n$}
\psfrag{B}[][][1]{$\widehat{\Delta}_n^{-1}$}
\includegraphics[width=2.6in]{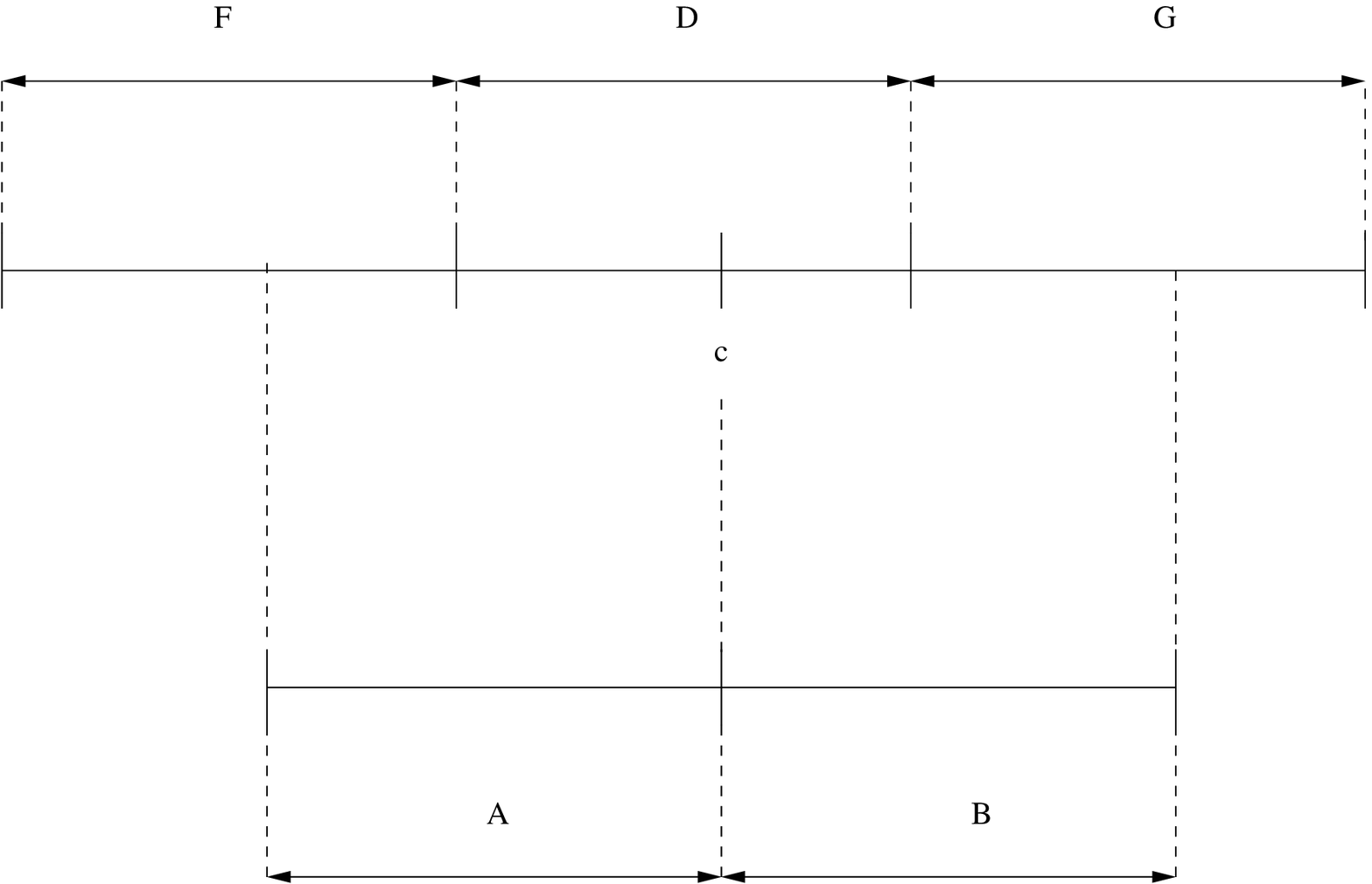}
\caption{\label{DeltahatDelta} The interval $\Delta$ of the standard partition $\mathcal{P}_{n+1}(c_0)$ containing the free critical point $\mathfrak{c}_{n}$, for a two-bridges level $n$.}
\end{figure}

\begin{remark}\label{obspontekoebe} For any given bridge $\widehat{G}_i\subset I_n(c_0)$ denote by $\widehat{G}_i^*\subset I_n(c_0)$ the union of $\widehat{G}_i$ with its two neighbours in the auxiliary partition $\widehat{\mathcal{P}}_{n+1}$, and note that the map $f^{q_{n+1}}: \mathrm{int}(\widehat{G}_i^*) \to f^{q_{n+1}}(\mathrm{int}(\widehat{G}_i^*))$ is a diffeomorphism. By Lemma \ref{regularandbridges}, both neighbours of $\widehat{G}_i$ are comparable to it (since they are comparable to $I_n(c_0)$), and the same happens to all its images up to time $q_{n+1}$. Therefore, by the standard Koebe's distortion principle \cite[Section IV.3, Theorem 3.1]{dMvS}, there exists a constant $K>1$, depending only on the real bounds, such that$$\frac{1}{K} \leq \frac{Df^j(x)}{Df^j(y)} \leq K \quad\mbox{for all $x,y\in\widehat{G}_i$ and $j\in\{1,...,q_{n+1}\}$.}$$This remark will be useful in Section \ref{secbalanponte} below, in order to propagate some geometric bounds from bridges contained in $I_n(c_0)$ to any bridge along the unit circle.
\end{remark}

\subsubsection{Balanced decompositions of almost parabolic maps}\label{subsecalm} Recall that the \textit{Schwarzian derivative} of a one-dimensional $C^3$ map $\phi$ is the differential operator defined at regular points by:
 \begin{equation*}
  S\phi(x)= \dfrac{D^{3}\phi(x)}{D\phi(x)} - \dfrac{3}{2} \left(\dfrac{D^{2}\phi(x)}{D\phi(x)}\right)^{2}.
 \end{equation*}

\begin{definition} An \emph{almost parabolic map} is a $C^3$ diffeomorphism 
\[
 \phi:\, J_1\cup J_2\cup \cdots \cup J_\ell \;\to\; J_2\cup J_3\cup \cdots \cup J_{\ell+1} \ ,
\]
where $J_1,J_2, \ldots, J_{\ell+1}$ are adjacent intervals on the circle (or on the line), with the following properties. 
\begin{enumerate}
 \item[(i)] One has $\phi(J_\nu)= J_{\nu+1}$ for all $1\leq \nu\leq \ell$;
 \item[(ii)] The Schwarzian derivative of $\phi$ is everywhere negative.
\end{enumerate}
The positive integer $\ell$ is called the \emph{length} of $\phi$, and the positive real number 
\[
 \sigma =\min\left\{\frac{|J_1|}{|\cup_{\nu=1}^\ell J_\nu|}\,,\, \frac{|J_\ell|}{|\cup_{\nu=1}^\ell J_\nu|}     \right\}
\]
is called the \emph{width\/} of $\phi$.
\end{definition}

The following property about the geometry 
of the fundamental domains of an almost parabolic map is due to J.-C. Yoccoz.

\begin{lemma}[Yoccoz]\label{lemyoccoz}
 Let $\phi: \bigcup_{\nu=1}^\ell J_\nu \to \bigcup_{\nu=2}^{\ell+1} J_\nu$ be an almost parabolic map with length $\ell$ and 
width $\sigma$. There exists a constant $C_\sigma>1$ (depending on $\sigma$ but not on $\ell$) such that, 
for all $\nu=1,2,\ldots,\ell$, we have
\begin{equation}\label{yocineq}
 \frac{C_\sigma^{-1}|I|}{[\min\{\nu,\ell+1-\nu\}]^2} \;\leq\; |J_\nu| \;\leq\;  \frac{C_\sigma|I|}{[\min\{\nu,\ell+1-\nu\}]^2}\ ,
\end{equation}
where $I=\bigcup_{\nu=1}^\ell J_\nu$ is the domain of $\phi$. 
\end{lemma}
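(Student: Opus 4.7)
The proof relies on the classical cross-ratio expansion principle for $C^3$ diffeomorphisms with negative Schwarzian derivative: for any quadruple $a<b<c<d$ in the domain of $\phi$,
\[
[\phi(a),\phi(b),\phi(c),\phi(d)] \;\geq\; [a,b,c,d],
\]
where $[a,b,c,d]=\frac{(b-a)(d-c)}{(c-a)(d-b)}$ is the standard Möbius-invariant cross-ratio (see \cite[Chapter IV]{dMvS}). This is the only genuine analytic input; the rest is a careful combinatorial/algebraic exploitation of it, combined with the width hypothesis.

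\emph{Step 1 (Normalization).} Both pre- and post-composition with Möbius transformations leave the sign of the Schwarzian, the cross-ratio $[a,b,c,d]$, and the conclusion \eqref{yocineq} invariant. Exploiting this freedom I would place $I=\bigcup_\nu J_\nu$ in a convenient position on the real line so that the width hypothesis translates into a concrete lower bound of the form $|J_1|,|J_\ell|\ge c(\sigma)|I|$, and so that the cross-ratios appearing in the iteration below are non-degenerate.

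\emph{Step 2 (A monotone quantity).} Label the endpoints $x_0<x_1<\cdots<x_{\ell+1}$, so that $J_\nu=[x_{\nu-1},x_\nu]$ and $\phi(x_i)=x_{i+1}$ for $0\le i\le\ell$. Apply the cross-ratio inequality to the quadruples $(x_0,x_{\nu-1},x_\nu,x_\ell)$ and, in parallel, to $(x_1,x_\nu,x_{\nu+1},x_{\ell+1})$; in both cases the $\phi$-image quadruple consists of endpoints of the same type with $\nu$ shifted by one. The resulting inequalities relate the partial sums $A_\nu=\sum_{k\le\nu}|J_k|$ and $B_\nu=\sum_{k>\nu}|J_k|$ to their shifts, and after algebraic manipulation boil down to an additive recurrence of the schematic form
\[
\Psi(\nu+1)-\Psi(\nu)\;\geq\;c_\sigma>0\,,
\]
where $\Psi(\nu)\asymp_{\sigma} (|I|/|J_\nu|)^{1/2}$ (this is the classical ``square-root ansatz'' that linearizes parabolic dynamics). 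The constant $c_\sigma$ depends only on $\sigma$, crucially not on $\ell$.

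\emph{Step 3 (Telescoping from both ends).} Summing $\Psi(k+1)-\Psi(k)$ from $k=1$ up to $\nu-1$ yields $\Psi(\nu)\ge c_\sigma(\nu-1)+\Psi(1)$, hence $|J_\nu|\le C_\sigma|I|/\nu^2$. Applying the entirely symmetric argument to $\phi^{-1}\colon\bigcup_{\nu=2}^{\ell+1}J_\nu\to\bigcup_{\nu=1}^{\ell}J_\nu$ (which has negative Schwarzian as well and the same width) and indexing from the right endpoint gives $|J_\nu|\le C_\sigma|I|/(\ell+1-\nu)^2$. Combining, one gets the upper bound in \eqref{yocineq} with denominator $\min\{\nu,\ell+1-\nu\}^2$. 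The matching lower bound follows by running the same argument in the reverse direction: starting from $|J_\nu|\le C_\sigma|I|/\nu^2$ and using $\sum_\nu|J_\nu|=|I|$ together with the already-established \emph{upper} bounds forces a matching lower bound at each index, once one rules out the degeneration of the cross-ratios via the width~$\sigma$.

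\emph{Main obstacle.} The delicate step is Step~2: turning the abstract, multiplicative cross-ratio inequality into an \emph{additive} recurrence with a positive gap $c_\sigma$ that is uniform in $\ell$. The width hypothesis enters precisely here, since without a lower bound on $|J_1|$ and $|J_\ell|$ the two cross-ratios being compared both tend to $0$ as $\ell\to\infty$, the gap $c_\sigma$ collapses, and no $1/\nu^2$ decay can be extracted. Identifying the correct ``nonlinear change of variable'' $|J_\nu|\mapsto\Psi(\nu)$ that converts the Schwarzian expansion into a uniformly positive additive gap is the conceptual heart of the argument; once this is in place, Step~3 is essentially bookkeeping.
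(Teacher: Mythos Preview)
The paper does not supply its own proof of this lemma; it simply refers to \cite[Appendix~B]{dFdM}. Your outline---cross-ratio expansion under negative Schwarzian, a linearizing coordinate $\Psi(\nu)\asymp(|I|/|J_\nu|)^{1/2}$, and telescoping from the two ends---is exactly the strategy carried out there, so the architecture is right.

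There is, however, a concrete slip in Step~3: you assert that $\phi^{-1}$ ``has negative Schwarzian as well''. This is false. From the chain rule $S(\phi^{-1}\circ\phi)=(S\phi^{-1}\circ\phi)(D\phi)^2+S\phi=0$ one gets $S\phi^{-1}(y)=-\,S\phi(\phi^{-1}(y))\,(D\phi^{-1}(y))^2>0$ whenever $S\phi<0$. The fix is painless, because cross-ratio \emph{expansion} under $\phi$ is literally the same statement as cross-ratio \emph{contraction} under $\phi^{-1}$; the right-to-left telescoping therefore uses the identical inequality, read in the reverse direction, and no hypothesis on $S\phi^{-1}$ is needed. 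Your lower-bound argument is also too quick as written: knowing $|J_\nu|\le C_\sigma|I|/\min\{\nu,\ell+1-\nu\}^2$ together with $\sum_\nu|J_\nu|=|I|$ does \emph{not} by itself prevent an individual $|J_\nu|$ from being very small. In the standard proof the lower bound comes instead from a two-sided control $c_\sigma\le\Psi(\nu+1)-\Psi(\nu)\le C_\sigma$ on the recurrence (the upper bound on the gap is obtained from the same cross-ratio inequality applied to a companion quadruple), which gives $\Psi(\nu)\asymp_\sigma\min\{\nu,\ell+1-\nu\}$ and hence both halves of \eqref{yocineq} at once.
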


A proof of this lemma can be found in \cite[Appendix B, page 386]{dFdM}. The following result is \cite[Lemma 4.5]{EdF}, and its proof is a fairly immediate application of Lemma \ref{lemyoccoz}.

\begin{lemma}\label{balancedecomp} Let $\phi$ be an almost parabolic map with domain $I=\bigcup_{\nu=1}^\ell J_\nu$, and let $d\in \mathbb{N}$ be largest 
such that $2^{d+1}\leq \ell/2$. There exists a descending chain of (closed) intervals
\[
 I=M_0\supset M_1\supset \cdots \supset M_{d+1}
\]
for which, letting $L_i,R_i$ denote the (left and right) connected components of $M_i\setminus M_{i+1}$ for all 
$0\leq i\leq d$, the following properties hold.
\begin{enumerate}
 \item[(i)] Each of the intervals $L_i,R_i$ is the union of exactly $2^i$ adjacent atoms (fundamental domains) 
of $I$.
 \item[(ii)] We have
\begin{equation}\label{balance1}
 I\;=\; \bigcup_{i=0}^{d}L_i\;\cup\;M_{d+1}\;\cup\; \bigcup_{i=0}^{d}R_i \ . 
\end{equation}
 \item[(iii)] For each $0\leq i\leq d$ we have $|L_i|\asymp |M_{i+1}|\asymp |R_i|$, with comparability constants 
depending only on the width $\sigma$ of $\phi$. 
\end{enumerate}
\end{lemma}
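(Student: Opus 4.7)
The plan is to construct the chain $\{M_i\}$ explicitly from the fundamental-domain labelling of $\phi$. Number the fundamental domains $J_1,\ldots,J_\ell$ from left to right inside $I$ and set
\[
M_i \;=\; \bigcup_{\nu=2^i}^{\ell+1-2^i} J_\nu \qquad \text{for } 0\leq i\leq d+1.
\]
Then $M_0=I$, and the defining property of $d$ (largest with $2^{d+1}\leq \ell/2$) guarantees $2(2^{d+1}-1)<\ell$, so that $M_{d+1}$ is non-empty and consists of the innermost fundamental domains. By construction the left and right connected components of $M_i\setminus M_{i+1}$ are
\[
L_i \;=\; J_{2^i}\cup J_{2^i+1}\cup\cdots\cup J_{2^{i+1}-1}, \qquad R_i \;=\; J_{\ell-2^{i+1}+2}\cup\cdots\cup J_{\ell-2^i+1},
\]
each a union of exactly $2^i$ adjacent fundamental domains. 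Items (i) and (ii) of the statement are then immediate: unfolding $M_i=L_i\cup M_{i+1}\cup R_i$ recursively yields the disjoint decomposition \eqref{balance1} of $I$.

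The real content is item (iii), and it is driven by Yoccoz's inequality \eqref{yocineq}, which gives $|J_\nu|\asymp |I|/\min(\nu,\ell+1-\nu)^2$ with constants depending only on $\sigma$. Every index occurring in $L_i$ satisfies $\nu\leq 2^{i+1}-1\leq 2^{d+1}-1<\ell/2$, so $\min(\nu,\ell+1-\nu)=\nu$ on $L_i$; combined with the elementary estimate $\sum_{\nu=N}^{2N-1}\nu^{-2}\asymp 1/N$ this yields
\[
|L_i|\;\asymp\; |I|\sum_{\nu=2^i}^{2^{i+1}-1}\frac{1}{\nu^2}\;\asymp\;\frac{|I|}{2^i},
\]
and symmetrically $|R_i|\asymp |I|/2^i$. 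For $i<d$ the matching estimate $|M_{i+1}|\asymp |I|/2^{i+1}$ is straightforward: the lower bound comes from $|M_{i+1}|\geq |L_{i+1}|\asymp |I|/2^{i+1}$, while the upper bound is obtained by summing Yoccoz's inequality over the indices of $M_{i+1}$ (a two-sided tail of $\sum \nu^{-2}$).

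I expect the main obstacle to be the edge case $i=d$, in which $M_{d+1}$ consists of Yoccoz's smallest (innermost) atoms. Here the specific choice of $d$ as the largest integer with $2^{d+1}\leq \ell/2$ becomes essential: it ensures that $\ell$ and $2^{d+2}$ are of the same order of magnitude, so that the number $\ell-2^{d+2}+2$ of atoms in $M_{d+1}$, together with their individual Yoccoz-size $\asymp |I|/\ell^2$, combine to give $|M_{d+1}|\asymp |I|/2^d$ with constants depending only on $\sigma$. Once this edge case is verified, (iii) follows for all $0\leq i\leq d$, completing the argument.
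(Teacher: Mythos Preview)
Your construction of the chain $\{M_i\}$ is the natural (and, given constraint (i), essentially forced) one, and your verification of (i), (ii), and of (iii) for $i<d$ is correct. This matches the paper's approach, which merely records that the lemma follows from Yoccoz's inequality and gives no further details.

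The gap is exactly at the edge case you flagged. You argue that since $\ell$ and $2^{d+2}$ are of the same order, the $\ell-2^{d+2}+2$ atoms of $M_{d+1}$, each of Yoccoz size $\asymp|I|/\ell^{2}$, combine to give $|M_{d+1}|\asymp|I|/2^{d}$. But two quantities being of the same order says nothing about their \emph{difference}: from $2^{d+2}\le\ell<2^{d+3}$ one only gets $\ell-2^{d+2}+2\in[2,\,2^{d+2}+1]$, and the lower end is realised when $\ell=2^{d+2}$. In that case $M_{d+1}$ consists of just the two central atoms $J_{2^{d+1}}$ and $J_{2^{d+1}+1}$, so $|M_{d+1}|\asymp|I|/4^{d+1}$, while $|L_d|\asymp|I|/2^{d}$; the ratio $|L_d|/|M_{d+1}|\asymp 2^{d+1}\asymp\ell$ is unbounded. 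Your upper bound $|M_{d+1}|\lesssim|I|/2^{d}$ (tail of $\sum\nu^{-2}$) is fine, but the claimed lower bound does not follow. One way to repair the argument is to stop one step earlier, taking $d$ largest with $2^{d+2}\le\ell/2$: then $\ell\ge 2^{d+3}$ forces $M_{d+1}$ to contain at least $2^{d+2}$ atoms, and already the block $J_{2^{d+1}},\dots,J_{2^{d+2}-1}\subset M_{d+1}$ contributes $\asymp|I|/2^{d+1}$, restoring $|M_{d+1}|\asymp|I|/2^{d}$. As written, however, the argument at $i=d$ does not close.
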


A decomposition of the form \eqref{balance1} satisfying properties (i), (ii), (iii) of Lemma \ref{balancedecomp} is called a \emph{balanced decomposition} of $I$. The intervals 
$M_i$, $0\leq i\leq d+1$, are said to be \emph{central}, whereas the intervals $L_i,R_i$, $0\leq i\leq d$, are 
said to be \emph{lateral}.

\begin{remark}\label{ratioofJs} As it follows from Yoccoz's Lemma \ref{lemyoccoz}, the following fact holds true for the fundamental domains $J_\nu$ ($1\leq \nu\leq \ell$) of any almost parabolic map $\phi$\,: for all $1\leq k<l<m\leq \ell$, one has
\[
 \frac{|J_{l+1}|+|J_{l+2}|+\cdots+|J_m|}{|J_{k+1}|+|J_{k+2}|+\cdots+|J_l|}\;\asymp\; \frac{k(m-l)}{m(l-k)}\ ,
\]with comparability constant depending only on the width $\sigma$ of $\phi$. In particular, if the interval $\bigcup_{k+1}^{m}J_{\nu}$ is contained in a lateral or the \emph{final} central interval of a balanced decomposition as in \eqref{balance1}, and if $m-l$ is at most four times larger than $l-k$ (and vice versa), one has $|J_{k+1}|+|J_{k+2}|+\cdots+|J_l| \asymp |J_{l+1}|+|J_{l+2}|+\cdots+|J_m|$, again with comparability constant depending only on the width of $\phi$. This fact will be useful in Section \ref{secfinalFG}, during the proof of Proposition \ref{gridpprop}.
\end{remark}

\subsubsection{Balanced decompositions of bridges}\label{secbalanponte} We recall now \cite[Lemma 4.1]{EdFG}.

\begin{lemma}\label{negschwarz} For any given multicritical circle map $f$ and any critical point $c_0 \in S^1$ of $f$ there exists $n_0=n_0(f)\in\nt$ such that for all $n \geq n_0$ we have
that
\[ Sf^{j}(x)<0\quad\text{for all $j\in \{1, \cdots, q_{n+1}\}$ and for all $x \in I_{n}(c_0)$ regular point of $f^{j}$.}
 \]
Likewise, we have
\[ Sf^{j}(x)<0\quad\text{for all $j\in \{1, \cdots, q_{n}\}$ and for all $x \in I_{n+1}(c_0)$ regular point of $f^j$}.
 \]
\end{lemma}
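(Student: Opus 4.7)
The strategy combines the strong negative Schwarzian singularity at each non-flat critical point with the orbit-disjointness built into the dynamical partition. The second assertion follows from the first by interchanging the roles of $n$ and $n+1$ (the iterates $\{f^k(I_{n+1}(c_0))\}_{0\leq k\leq q_n-1}$ are also a disjoint subfamily of $\mathcal{P}_n(c_0)$), so I focus on the first.

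As a preliminary step, I would quantify the negativity of $Sf$ near each critical point. In the $C^3$ local coordinate $\phi$ of Definition \ref{naoflat} around $c_i$, the map $f$ is the composition of $\phi$ with $t\mapsto t|t|^{d_i-1}$, whose Schwarzian equals $-\tfrac{d_i^2-1}{2t^2}$; the Schwarzian cocycle identity $Sf(x)=S(t|t|^{d_i-1})(\phi(x))(D\phi(x))^2+S\phi(x)$ then yields an open neighborhood $U_i\ni c_i$ and a constant $A_i>0$ for which $Sf(x)\leq -A_i/|x-c_i|^2$ on $U_i\setminus\{c_i\}$, while on the compact complement of $U=\bigcup_i U_i$ the continuous function $Sf$ is bounded above by some constant $K_+>0$.

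Next, I apply the Schwarzian chain rule $S(f^j)(x)=\sum_{k=0}^{j-1}Sf(f^k(x))(Df^k(x))^2$ and isolate the $k=0$ term. Once $n$ is large enough that $I_n(c_0)\subset U_0$, every $x\in I_n(c_0)$ satisfies $|x-c_0|\leq|I_n(c_0)|$ (since $c_0$ is an endpoint of $I_n(c_0)$), so the $k=0$ contribution is at most $-A_0/|I_n(c_0)|^2$. For the remaining terms I use that $f^k(I_n(c_0))$, $0\leq k\leq q_{n+1}-1$, are pairwise disjoint atoms of $\mathcal{P}_n(c_0)$, so $\sum_{k}|f^k(I_n(c_0))|\leq 1$. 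Combining the real bounds with Koebe distortion applied to the diffeomorphic branches $f^k|_{\mathrm{int}(I_n(c_0))}$ gives $Df^k(x)\asymp|f^k(I_n(c_0))|/|I_n(c_0)|$ uniformly in $k$; writing $\varepsilon_n=\max_{0\leq k\leq q_{n+1}-1}|f^k(I_n(c_0))|$, I then estimate
\[
\sum_{k=0}^{j-1}\bigl(Df^k(x)\bigr)^2\;\lesssim\;\frac{1}{|I_n(c_0)|^2}\sum_{k=0}^{q_{n+1}-1}|f^k(I_n(c_0))|^2\;\leq\;\frac{\varepsilon_n}{|I_n(c_0)|^2}\,.
\]
Corollary \ref{corbeau} forces $\varepsilon_n\to 0$, so the total positive contribution is at most $K_+\varepsilon_n/|I_n(c_0)|^2$, whence
\[
S(f^j)(x)\;\leq\;\frac{K_+\varepsilon_n-A_0}{|I_n(c_0)|^2}\;<\;0
\]
for every $1\leq j\leq q_{n+1}$ and every regular $x\in I_n(c_0)$, provided $n\geq n_0$ is large enough.

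The delicate point is the uniform Koebe distortion for the branches $f^k|_{\mathrm{int}(I_n(c_0))}$ up to $k=q_{n+1}$. In the single-critical setting this is a classical consequence of the Denjoy-style cross-ratio arguments underlying the real bounds, but in the multicritical case one must verify that a definite Koebe space can be pulled back along the entire orbit of $I_n(c_0)$; this is exactly the content of the distortion lemmas developed in \cite{EdFG}, which I would invoke to close the argument.
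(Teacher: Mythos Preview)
The paper does not prove this lemma at all: it is quoted verbatim as \cite[Lemma~4.1]{EdFG} and simply cited. So there is no ``paper's own proof'' to compare against beyond the citation. Your sketch is essentially the classical single-critical argument (as in \cite{dFdM}), and in that setting it is fine.

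There is, however, a real gap in the multicritical case, and it is not the one you flag. You write ``Koebe distortion applied to the diffeomorphic branches $f^k|_{\mathrm{int}(I_n(c_0))}$'', and later diagnose the delicate point as pulling back a definite Koebe \emph{space}. But for a bicritical (or multicritical) map these branches are typically \emph{not diffeomorphisms}: this is precisely the point of the free critical point $\mathfrak{c}_n$ in Section~\ref{secinter}. Some iterate $f^m(I_n(c_0))$, $0<m<q_{n+1}$, contains $c_1$ in its interior, so $f^k|_{I_n(c_0)}$ has an interior critical point for every $k>m$. The pointwise comparability $Df^k(x)\asymp |f^k(I_n(c_0))|/|I_n(c_0)|$ therefore fails outright (near $f^{-m}(c_1)$ the derivative $Df^{m+1}$ collapses, elsewhere it overshoots), and with it your bound on $\sum_k (Df^k(x))^2$. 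No amount of Koebe space repairs this, because Koebe requires a diffeomorphism to begin with.

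The fix in \cite{EdFG} is not a distortion lemma of the type you invoke; the argument there bounds $\sum_k (Df^k(x))^2$ by exploiting the power-law structure of $Df$ near each $c_i$ together with the cross-ratio/real-bounds machinery, rather than by treating $f^k$ as a single diffeomorphic branch. Since your fallback is to cite \cite{EdFG}, you end up in the same place as the paper---but your written argument, as it stands, does not go through for more than one critical point.
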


At this point, we would like to apply Lemma \ref{balancedecomp} to any saddle-node bridge $\widehat{G}_i$ of $\widehat{\mathcal{P}}_{n+1}|_{I_n(c_0)}$, for $n \geq n_0$. Indeed, by construction, the map $\phi=f^{q_{n+1}}|_{\widehat{G}_i}$ has no critical points, hence it is a diffeomorphism onto its image. Moreover, by Lemma \ref{negschwarz}, $\phi$ has negative Schwarzian derivative. Finally, note that by Lemma \ref{regularandbridges} and Property \eqref{itemboundgeom} of the auxiliary partitions, the width of $\phi$ only depends on the real bounds. The only problem seems to be that, for a two-bridges level $n$, both bridges $\widehat{G}_1$ and $\widehat{G}_2$ of $\widehat{\mathcal{P}}_{n+1}|_{I_n(c_0)}$ contain an element of the auxiliary partition $\widehat{\mathcal{P}}_{n+1}$ which may not be a fundamental domain for $f^{q_{n+1}}$, namely $\Delta_{R}^{n+1}$ and $\Delta_{L}^{n+1}$ respectively. However, both of these intervals contain a fundamental domain and are contained in the union of such fundamental domain with one of its adjacent fundamental domains. Therefore, estimate \eqref{yocineq} of Lemma \ref{lemyoccoz} still holds, just by adjusting constants.

In other words, there exists a balanced decomposition for any saddle-node bridge of $\widehat{\mathcal{P}}_{n+1}|_{I_n(c_0)}$ (with uniform comparability constants, depending only on the real bounds for $f$). With this at hand, we use Remark \ref{obspontekoebe} to spread this decomposition to all bridges around the unit circle, in order to obtain the following result, which is the goal of both sections \ref{subsecalm} and \ref{secbalanponte}.

\begin{lemma}\label{generalbalancedecomp} There exists a balanced decomposition for any saddle-node bridge of $\widehat{\mathcal{P}}_{n+1}$, with uniform comparability constants depending only on the real bounds for $f$.
\end{lemma}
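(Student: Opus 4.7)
The plan is to first establish the lemma for a saddle-node bridge $\widehat{G}_i$ contained inside $I_n(c_0)$ by realizing $\phi = f^{q_{n+1}}|_{\widehat{G}_i}$ as an almost parabolic map and invoking Lemma \ref{balancedecomp}, then to propagate the decomposition to an arbitrary saddle-node bridge of $\widehat{\mathcal{P}}_{n+1}$ via the Koebe distortion bound supplied by Remark \ref{obspontekoebe}.

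For the first step, I would check each of the hypotheses required by Lemma \ref{balancedecomp}. By construction of the auxiliary partition, the bridge $\widehat{G}_i$ avoids the free critical point $\mathfrak{c}_n$ and the critical point $c_0$ itself, so $f^{q_{n+1}}$ restricts to a $C^3$ diffeomorphism on a neighborhood of $\widehat{G}_i$; this together with Lemma \ref{negschwarz}, applied for $n\geq n_0$, guarantees that the restriction has negative Schwarzian derivative. Next I would verify the width condition: by Lemma \ref{regularandbridges}, each atom of $\widehat{\mathcal{P}}_{n+1}$ sitting inside $\widehat{G}_i$ is comparable to the interval of $\widehat{\mathcal{P}}_n$ that contains it (hence comparable to $I_n(c_0)$), and therefore comparable to $\widehat{G}_i$ with constants that depend only on the real bounds. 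In particular the two extreme atoms of $\widehat{G}_i$ have lengths comparable to $|\widehat{G}_i|$, so the width $\sigma$ of $\phi$ admits a uniform lower bound depending only on $C=C(N,d)$ from Theorem \ref{teobeau}.

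The minor subtlety is that in the two-bridges case the bridges $\widehat{G}_1$ and $\widehat{G}_2$ contain the non-standard atoms $\Delta_R^{n+1}$ and $\Delta_L^{n+1}$, which are not genuine fundamental domains of $f^{q_{n+1}}$. However, as noted in the paragraph preceding the statement, each of these atoms contains a fundamental domain of $\phi$ and is itself contained in the union of that fundamental domain with an adjacent one; so Yoccoz's estimate \eqref{yocineq} still holds after absorbing a uniform multiplicative constant. With Lemma \ref{balancedecomp} thus applied to $\widehat{G}_i \subset I_n(c_0)$, I obtain a chain of nested central intervals and lateral pieces satisfying (i)--(iii), with comparability constants depending only on the real bounds.

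For the second step, I would push the decomposition forward by $f^j$ for $0\leq j\leq q_{n+1}-1$, obtaining a decomposition of the bridge $f^j(\widehat{G}_i)$ of $\widehat{\mathcal{P}}_{n+1}$. By Remark \ref{obspontekoebe} the distortion of $f^j$ on $\widehat{G}_i$ is bounded by a constant $K$ depending only on the real bounds, so ratios of lengths among the intervals in the decomposition are preserved up to the multiplicative factor $K$; in particular properties (i)--(iii) of a balanced decomposition transfer to the image, again with constants depending only on the real bounds. I do not anticipate a serious obstacle here: the entire argument is a careful bookkeeping exercise, and the only place that required any care was checking that the two irregular atoms in the two-bridges scenario do not destroy the almost parabolic structure of $\phi$, which is settled by the ``contained in two fundamental domains'' observation together with Lemma \ref{regularandbridges}.
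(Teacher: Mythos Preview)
Your approach is essentially identical to the paper's: verify the almost-parabolic hypotheses for $f^{q_{n+1}}$ on a bridge inside $I_n(c_0)$ (diffeomorphism by construction, negative Schwarzian via Lemma~\ref{negschwarz}, uniform width via the real bounds), absorb the irregular atoms $\Delta_R^{n+1},\Delta_L^{n+1}$ into Yoccoz's estimate, apply Lemma~\ref{balancedecomp}, and then spread by $f^j$ using the Koebe bound of Remark~\ref{obspontekoebe}. One small correction: your justification of the width bound misapplies Lemma~\ref{regularandbridges}, which concerns atoms of $\widetilde{\mathcal{P}}_{n+1}$, not arbitrary atoms of $\widehat{\mathcal{P}}_{n+1}$ inside the bridge (the interior atoms of a long saddle-node bridge are \emph{not} comparable to $I_n(c_0)$); the correct route, as in the paper, is to observe that the two extreme atoms of $\widehat{G}_i$ are adjacent to regular intervals of $\widetilde{\mathcal{P}}_{n+1}$, invoke Property~(6) of the auxiliary partitions for that adjacency, and then Lemma~\ref{regularandbridges} for the regular neighbours.
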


We finish Section \ref{secbalanponte} with the following estimate (borrowed from \cite[Lemma 4.11]{dFdM}), that will be useful in Section \ref{SecProofThmB} (during the proof of Lemma \ref{ineqbeta}).
  
\begin{lemma}\label{corolariolemayoccoz} There exists a constant $M>1$ depending only on the real bounds such that for all $k,p\in\nt$ we have that if $I\in\widehat{\mathcal{P}}_{k}$, $J\in\widehat{\mathcal{P}}_{k+p}$ and $J \subset I$, then$$|I| \leq M^p \,(a_{k+1}\,a_{k+2}\,\dots\,a_{k+p})^2\,|J|\,.$$Moreover, the same estimate holds replacing $J$ with its return $f^{q_{k+1}}(J)$.
\end{lemma}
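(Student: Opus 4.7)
The plan is to establish the inequality by induction on $p$, with the base case $p=1$ carrying the main geometric content and subsequent steps telescoping via the combinatorial structure of the auxiliary partitions.

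For the base case $p=1$, let $I\in\widehat{\mathcal{P}}_k$ and $J\in\widehat{\mathcal{P}}_{k+1}$ with $J\subset I$. If $I$ itself belongs to $\widehat{\mathcal{P}}_{k+1}$ (as happens, for instance, when $I$ is an iterate of $I_{k+1}(c_0)$), then by Property~\eqref{itemref} of the auxiliary partitions we have $J=I$ and the bound is trivial with $M\geq 1$. Otherwise $I$ is genuinely refined: we split into two cases according to the type of atom of $\widehat{\mathcal{P}}_{k+1}$ contained in $I$. If $J$ is a regular atom, Lemma~\ref{regularandbridges} gives $|J|\asymp |I|$, so $|I|\leq C\,|J|\leq C\,a_{k+1}^2\,|J|$. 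If instead $J$ is a fundamental domain sitting inside a bridge $\widehat{G}\subset I$, I invoke Lemma~\ref{generalbalancedecomp} to obtain a balanced decomposition of $\widehat{G}$ and apply Yoccoz's inequality~\eqref{yocineq}: since the bridge consists of at most $O(a_{k+1})$ fundamental domains, the smallest atom of $\widehat{\mathcal{P}}_{k+1}$ inside $\widehat{G}$ has length at least $C_\sigma^{-1}|\widehat{G}|/a_{k+1}^2$. Combining this with Lemma~\ref{regularandbridges} (which gives $|\widehat{G}|\asymp|I|$) yields $|I|\leq M_0\,a_{k+1}^2\,|J|$ for a constant $M_0>1$ depending only on the real bounds.

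For the inductive step, suppose the estimate holds up to level $p$, and let $J\in\widehat{\mathcal{P}}_{k+p+1}$ with $J\subset I\in\widehat{\mathcal{P}}_k$. Let $J'\in\widehat{\mathcal{P}}_{k+p}$ be the unique atom containing $J$ (which exists since $\widehat{\mathcal{P}}_{k+p+1}$ refines $\widehat{\mathcal{P}}_{k+p}$ by Property~\eqref{itemref}); of course $J'\subset I$. Applying the induction hypothesis to the pair $(I,J')$ and the base case at level $k+p$ to the pair $(J',J)$ gives
\[
|I|\;\leq\; M_0^p\,(a_{k+1}\cdots a_{k+p})^2\,|J'|\;\leq\; M_0^{p+1}\,(a_{k+1}\cdots a_{k+p+1})^2\,|J|,
\]
closing the induction with $M=M_0$.

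For the moreover part, I note that consecutive fundamental domains of the almost parabolic map $f^{q_{k+1}}$ are comparable (a direct consequence of Yoccoz's Lemma~\ref{lemyoccoz}, see also Remark~\ref{ratioofJs}), and Koebe's principle as formulated in Remark~\ref{obspontekoebe} guarantees that $f^{q_{k+1}}$ has uniformly bounded distortion on the relevant neighborhoods. Together with the $C^1$-bounds of Corollary~\ref{c1bounds}, these imply $|f^{q_{k+1}}(J)|\asymp|J|$ with a constant depending only on the real bounds, so absorbing this factor into $M$ gives the stated inequality with $f^{q_{k+1}}(J)$ in place of $J$.

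The main obstacle in this scheme is verifying the base case uniformly across all atom-types of $\widehat{\mathcal{P}}_k$ and all possible atoms of $\widehat{\mathcal{P}}_{k+1}$ produced by the construction in Section~\ref{secbuildaux} (regular intervals, bridges at two-bridges levels, bridges at one-bridge levels, the special atoms $\widehat{\Delta}_n^j$, $\Delta_L^{n+1}$, $\Delta_R^{n+1}$, etc.), and propagating the estimate from the model interval $I_k(c_0)$ to all $q_{k+1}$ iterates via bounded distortion. Once this case analysis is carried out, the induction is essentially formal.
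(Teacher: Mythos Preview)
Your proposal is essentially correct and follows the same approach the paper has in mind: the paper's own proof is a one-line sketch listing Lemma~\ref{lematrayectoria}, Lemma~\ref{regularandbridges}, Lemma~\ref{negschwarz}, Yoccoz's Lemma~\ref{lemyoccoz}, Remark~\ref{obspontekoebe}, and ``a simple inductive argument,'' and your write-up is a reasonable expansion of exactly this combination (with Lemma~\ref{negschwarz} entering implicitly through Lemma~\ref{generalbalancedecomp}). The only noticeable difference is that the paper invokes Lemma~\ref{lematrayectoria}, presumably to track the combinatorial position of $J$ inside $I$ via iterates, whereas you bypass it by interpolating a single intermediate atom $J'\in\widehat{\mathcal{P}}_{k+p}$ at each step of the induction; both routes are valid, and yours is arguably cleaner.
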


Lemma \ref{corolariolemayoccoz} follows from Lemma \ref{lematrayectoria}, Lemma \ref{regularandbridges}, Lemma \ref{negschwarz}, Yoccoz's Lemma \ref{lemyoccoz}, Remark \ref{obspontekoebe} and a simple inductive argument.

\subsubsection{The fine grid}\label{secfinalFG} In Proposition \ref{gridpprop} below we finally construct a fine grid for $f$. The partition $\mathcal{Q}_n$ that we want is constructed from $\widehat{\mathcal{P}}_m$ and $\widetilde{\mathcal{P}}_m$ for various values of $m\leq n$. At this point, our construction  is essentially the same as in \cite[pages 359-361]{dFdM} or \cite[pages 5612-5614]{EdF}. We reproduce it here just for the convenience of the reader.

\begin{proposition}\label{gridpprop} There exists a fine grid $\{\mathcal{Q}_n\}$ in $S^1$ with the following properties.
\begin{enumerate}
\item[($a$)] Every atom of $\mathcal{Q}_n$ is the union of at most $b=1000$ atoms of $\mathcal{Q}_{n+1}$.
\item[($b$)] Every atom $\Delta\in \mathcal{Q}_n$ is a union of atoms of $\widehat{\mathcal{P}}_m$ and $\widetilde{\mathcal{P}}_m$ for some $m\leq n$, and there are four possibilities:
\begin{enumerate}
\item[($b_1$)] $\Delta$ is a single atom of $\widehat{\mathcal{P}}_m$, contained in a bridge atom of $\widetilde{\mathcal{P}}_m$;
\item[($b_2$)] $\Delta$ is a single atom of $\widetilde{\mathcal{P}}_m$;
\item[($b_3$)] $\Delta$ is a central interval of a saddle-node bridge atom of $\widetilde{\mathcal{P}}_m$;
\item[($b_4$)] $\Delta$ is the union of at least two atoms of $\widehat{\mathcal{P}}_{m}$, contained in a saddle-node bridge atom of $\widetilde{\mathcal{P}}_m$.
\end{enumerate}
\end{enumerate}
\end{proposition}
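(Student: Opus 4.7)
The plan is to construct $\{\mathcal{Q}_n\}$ by induction on $n$, starting from $\mathcal{Q}_0 := \widehat{\mathcal{P}}_0$ and producing $\mathcal{Q}_{n+1}$ by refining each atom of $\mathcal{Q}_n$ into a bounded number of pieces. At any stage, every atom $\Delta\in\mathcal{Q}_n$ will carry a bookkeeping label $(m,b_j)$, where $m\leq n$ is the auxiliary-partition level that generated it and $b_j\in\{b_1,b_2,b_3,b_4\}$ is its type. The level $m$ increases monotonically whenever $\Delta$ is refined, which is what will guarantee that $m\to\infty$ with $n$ (hence that $\mathrm{diam}\,\mathcal{Q}_n\to 0$).

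The refinement rule applied to $\Delta$ depends on its type. If $\Delta$ is of type $(b_1)$ or $(b_2)$ and is not a saddle-node bridge, replace $\Delta$ by the atoms of $\widehat{\mathcal{P}}_{m+1}$ and $\widetilde{\mathcal{P}}_{m+1}$ it contains (saddle-node bridge atoms of $\widetilde{\mathcal{P}}_{m+1}$ inside $\Delta$ are kept as single children, to be peeled later); by Remark~\ref{max24} this produces at most $48$ children, and comparability of the resulting atoms follows from Lemma~\ref{regularandbridges} together with the real bounds. If $\Delta$ is a saddle-node bridge atom (marked $(b_3)$ the first time it is handled), apply Lemma~\ref{generalbalancedecomp}, obtaining a balanced decomposition $M_i=L_i\cup M_{i+1}\cup R_i$, and peel a single layer: split $\Delta$ into three children $L_i$, $M_{i+1}$, $R_i$, which are pairwise comparable by Lemma~\ref{balancedecomp}(iii). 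The central child inherits type $(b_3)$ and will be peeled further at the next step until it becomes comparable to a fundamental domain; the lateral children $L_i,R_i$ are labelled $(b_4)$ (or $(b_1)$ if $i=0$ and they are single atoms of $\widehat{\mathcal{P}}_{m+1}$). Finally, for a lateral piece $L_i$ of type $(b_4)$, which consists of $2^i$ consecutive fundamental domains of the almost parabolic map, use Remark~\ref{ratioofJs} to split it into two halves of $2^{i-1}$ domains each, which are comparable in size; iterate this halving over successive steps until $L_i$ has been exhausted into singletons of $\widehat{\mathcal{P}}_{m+1}$ (type $(b_1)$).

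Properties $(a)$ and $(b)$ of the proposition are immediate from the rule above: each refinement produces at most $\max\{48,3,2\}=48$ direct children, well below the announced bound $b=1000$, which already accommodates regular bridges as single atoms of $\widetilde{\mathcal{P}}_m$ refined straight into their $\leq 1000$ constituent atoms of $\widehat{\mathcal{P}}_m$. Conditions \eqref{item1FG} and \eqref{item2FG} of Definition~\ref{def:finegrid} are built into the construction. The geometric condition \eqref{item3FG} is verified case by case: comparability of adjacent children produced by the same rule comes from Lemma~\ref{balancedecomp}(iii) for the balanced-decomposition step, from Remark~\ref{ratioofJs} for the halving step, and from Lemma~\ref{regularandbridges} combined with the real bounds for the direct-refinement step.

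The main obstacle will be to verify comparability of adjacent atoms produced by \emph{different} refinement rules, in particular across the interface between a peeled saddle-node bridge and its neighbouring regular atoms. The point is that the outermost lateral piece $L_0$ (respectively $R_0$) of a saddle-node bridge $\widehat{G}_i\subset I_n(c_0)$ is by Lemma~\ref{balancedecomp}(iii) comparable to the full bridge, hence to the atom of $\widehat{\mathcal{P}}_n$ containing it (Lemma~\ref{regularandbridges}), which in turn is comparable to the adjacent atom of $\widetilde{\mathcal{P}}_{n+1}$ by the real bounds; an analogous estimate holds for iterates by virtue of the Koebe bound in Remark~\ref{obspontekoebe}, which propagates these comparabilities around $S^1$. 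With this careful bookkeeping, the construction parallels the one in \cite[pp.~359--361]{dFdM} and \cite[pp.~5612--5614]{EdF}, which we follow verbatim once the auxiliary and intermediate partitions built in Sections~\ref{secinter}--\ref{secbalanponte} are in hand.
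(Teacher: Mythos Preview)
Your construction is essentially the paper's: build $\mathcal{Q}_n$ inductively, refining each atom according to its type, peeling saddle-node bridges one balanced-decomposition layer at a time (Lemma~\ref{generalbalancedecomp}) and halving lateral pieces (Remark~\ref{ratioofJs}), then verifying comparability case by case. A couple of indexing slips aside (a saddle-node bridge is first met as type $(b_2)$, not $(b_3)$; a regular bridge in $\widetilde{\mathcal{P}}_m$ decomposes into atoms of $\widehat{\mathcal{P}}_m$, not $\widehat{\mathcal{P}}_{m+1}$), the one point you leave implicit that the paper makes explicit is that two adjacent atoms of $\mathcal{Q}_n$ which are single atoms of $\widehat{\mathcal{P}}_m$ and $\widehat{\mathcal{P}}_{m'}$ must satisfy $|m-m'|\leq 1$---this is what lets Property~\eqref{itemboundgeom} of the auxiliary partitions finish the comparability check away from bridge interfaces.
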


\begin{remark}\label{remvertFG} Any vertex of $\mathcal{Q}_n$ is a vertex of $\widehat{\mathcal{P}}_m$ for some $m \leq n$, and then it is also a vertex of $\widehat{\mathcal{P}}_n$. In other words, the auxiliary partition $\widehat{\mathcal{P}}_n$ is a refinement of the fine grid~$\mathcal{Q}_n$.
\end{remark}

\begin{proof}[Proof of Proposition \ref{gridpprop}] The proof is by induction on $n$. The first partition $\mathcal{Q}_1$ consists of all atoms of $\widetilde{\mathcal{P}}_1$ which are not saddle-node atoms, together with the intervals $L_0$, $M_1$ and $R_0$ of each saddle-node interval $I\in\widetilde{\mathcal{P}}_1$ ($I=L_0\cup M_1\cup R_0$). It is clear that each atom of $\mathcal{Q}_1$ falls within one of the categories ($b_1$)-($b_3$) above. 

Assuming $\mathcal{Q}_n$ has been defined, we define $\mathcal{Q}_{n+1}$ as follows. Take an atom $I\in \mathcal{Q}_n$ and consider the four cases below.
\begin{enumerate}
\item[(1)] If $I$ is a single atom of $\widehat{\mathcal{P}}_{m}$, we break it into the union of at most $48$ atoms of $\widetilde{\mathcal{P}}_{m+1}$ (recall Remark \ref{max24}), and take them as atoms of $\mathcal{Q}_{n+1}$, all of which are of type ($b_2$).
\item[(2)] If $I$ is a single atom of $\widetilde{\mathcal{P}}_m$, then one of two things can happen:
\begin{enumerate}
\item[(i)] $I$ is a saddle-node atom: In this case write $I=L_0\cup M_1\cup R_0$ as above and take $L_0$, $R_0$ and $M_1$ as atoms of $\mathcal{Q}_{n+1}$. Note that the lateral intervals $L_0$ and $R_0$ are atoms of type ($b_1$), while the central interval $M_1$ is of type ($b_3$).
\item[(ii)] $I$ is not a saddle-node atom: Here, there are two sub-cases to consider. The first possibility is that $I$ is a single (regular) atom of $\widehat{\mathcal{P}}_{m}$, and we proceed as in Item $(1)$ above. The second possibility is that $I$ is a (regular) bridge, in which case we break it up into its $\leq 1000$ constituent atoms of $\widehat{\mathcal{P}}_{m+1}$, and take them as atoms of $\mathcal{Q}_{n+1}$, all of which are of type ($b_1$). 
\end{enumerate}
\item[(3)] If $I$ is a central interval of a saddle-node bridge atom of $\widetilde{\mathcal{P}}_m$, then one of two things can happen. If $I$ is the \emph{final} central interval, proceed as in Item $(4)$ below (unless $I$ is just a single atom of $\widehat{\mathcal{P}}_{m}$, in which case and we proceed as in Item $(1)$ above). If $I$ is a central interval which is not the final interval, consider the next central interval inside $I$, say $M$, and the two corresponding lateral intervals $L$ and $R$ such that $I=L\cup M\cup R$, and declare $L$, $R$ and $M$ members of $\mathcal{Q}_{n+1}$. Note that $L$ and $R$ are of type ($b_4$), while $M$ is of type ($b_3$).
\item[(4)] If $I$ is a union of $p\ge 2$ consecutive atoms
$J_1,\ldots,J_p$ of $\widehat{\mathcal{P}}_{m}$ inside a saddle-node bridge atom of $\widetilde{\mathcal{P}}_m$, divide it up into two approximately equal parts. More precisely, write $p=2q+r$, where $r=0$ or $1$, and consider $I=L\cup R$
where 
$$
L\;=\;\bigcup_{j=1}^qJ_j \ ,
\ R\;=\;\bigcup_{j=q+1}^pJ_j
\ .
$$
We obtain in this fashion two new atoms of $\mathcal{Q}_{n+1}$ (namely $L$ and $R$) which are either single atoms of 
$\widehat{\mathcal{P}}_{m}$, and therefore of type ($b_1$), or once again intervals of type ($b_4$).
\end{enumerate}
This completes the induction. Both Item \eqref{item1FG} and Item \eqref{item2FG} in Definition \ref{def:finegrid} follow directly from our construction, so we finish the proof of Proposition \ref{gridpprop} verifying Item \eqref{item3FG}. Given two adjacent atoms $\Delta,\Delta'\in \mathcal{Q}_n$, there are three cases to consider.
\begin{enumerate}
\item[(a)] There exist $m, m'\leq n$ such that $\Delta$ is a single atom of $\widehat{\mathcal{P}}_m$ and $\Delta'$ is a single atom of $\widehat{\mathcal{P}}_{m'}$. In this case, either $m=m'$, or $m$ and $m'$ differ by $1$ (this is easily proved by induction on $n$ from the construction of $\mathcal{Q}_n$ given above). But then we have $|\Delta|\asymp |\Delta'|$ by Property \eqref{itemboundgeom} of the auxiliary partitions (see Section \ref{secinter}).
\item[(b)] There exist $m, m'\leq n$ such that $\Delta$ is a single atom of $\widetilde{\mathcal{P}}_m$ and $\Delta'$ is a single atom of $\widetilde{\mathcal{P}}_{m'}$. This case is analogous to the previous one, just replacing Property \eqref{itemboundgeom} with Lemma \ref{regularandbridges}.
\item[(c)] For some $m\leq n$, at least one of the two atoms, say $\Delta$, is the union of $p\geq 2$ atoms of $\widehat{\mathcal{P}}_{m}$ inside a single atom of $\widetilde{\mathcal{P}}_m$, which is necessarily a bridge $\widehat{G}\in\widetilde{\mathcal{P}}_m$. If we are not in the previous cases, then $\Delta'$ is also contained in the bridge $\widehat{G}$. Looking at the balanced decomposition of $\widehat{G}$ (given by Lemma \ref{generalbalancedecomp}), we see that there are four possibilities.
\begin{itemize}
\item The first possibility is that $\Delta$ is a lateral interval ($L_i$ or $R_i$) and $\Delta'$ is the corresponding central interval $M_{i+1}$ of the balanced decomposition of $\widehat{G}$, for some $i\in\{0,..,d\}$. This case follows from Property (iii) of Lemma \ref{balancedecomp}.
\item The second possibility is that $\Delta$ is contained in a final lateral interval ($L_d$ or $R_d$) and $\Delta'$ is contained in the final central interval $M_{d+1}$ of the balanced decomposition of $\widehat{G}$. This case follows from Property (iii) of Lemma \ref{balancedecomp} and Remark \ref{ratioofJs}.
\item The third possibility is that both $\Delta$ and $\Delta'$ are contained in the same lateral interval ($L_i,R_i$) or the same final central interval ($M_{d+1}$) of said balanced decomposition. In this case, the number of fundamental domains of $\widehat{G}$ inside $\Delta$ differs at most by $1$ from the number of those inside $\Delta'$, and then we have $|\Delta|\asymp |\Delta'|$ by Remark \ref{ratioofJs}.
\item The fourth possibility is that $\Delta$ and $\Delta'$ are contained in adjacent intervals of the balanced decomposition of $\widehat{G}$, both being lateral intervals. In this case, one of the two atoms, $\Delta$ or $\Delta'$, is the union of at most four times fundamental domains of $\widehat{G}$ than the other, and we have $|\Delta|\asymp |\Delta'|$, again by Remark \ref{ratioofJs}.
\end{itemize}
\end{enumerate}
This establishes the desired comparability of adjacent atoms of $\mathcal{Q}_n$ in all cases, with uniform constants depending only on the real bounds, and the proof of Proposition \ref{gridpprop} is complete.
\end{proof}

\section{Proof of Theorem \ref{theoremb}}\label{SecProofThmB}

This final section is devoted to the proof of Theorem \ref{theoremb} (stated in Section \ref{secreduction}). With this purpose, let $f$ and $g$ be $C^3$ bicritical circle maps with the same irrational rotation number, which is contained in the set $\mathcal{A}$ (Definition \ref{setA}). Assume that $f$ and $g$ have the same signature (Definition \ref{signature}) and that the two critical points of $f$, say $c_0$ and $c_1$, are also the critical points of $g$. Assume, finally, that there exist $C>1$ and $0<\mu<1$ such that
\begin{equation}\label{hypTeoB}
\left|\,\frac{|I_{n}^g(c_i)|}{|I_{n}^f(c_i)|}- 1\,\right| \leq C\, \mu^{n} \quad\mbox{and}\quad d_{1}(\mathcal{R}_i^nf, \mathcal{R}_i^ng) \leq C\, \mu^{n}\,,
\end{equation}both for $i=0$ and $i=1$, and for all $n\in\nt$. Using Proposition \ref{criterion} and the fine grid constructed in Section \ref{sec:anewpartition}, we will prove that the topological conjugacy $h$ between $f$ and $g$ that fixes both $c_0$ and $c_1$ is a $C^{1+\alpha}$ diffeomorphism. Our first goal is the following result.

\begin{proposition}[Key estimate]\label{corokey} There exist constants $C_1>0$ and $\mu_1\in(0,1)$ depending only on the real bounds for $f$ such that$$\|h-\Id\|_{C^0(J_n^{f}(c_0))}\leq C_1\,\big|J_n^{f}(c_0)\big|\,\mu_{1}^{n}\quad\mbox{for all $n\in\nt$.}$$
\end{proposition}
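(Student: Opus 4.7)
The plan is to rescale $J_n^f(c_0)$ to have bounded length, reducing the claim to an exponential $C^0$-bound on the rescaled version of $h$, and then to establish that bound using the $C^1$-proximity of the renormalized pairs together with the real bounds and the condition $\rho\in\mathcal{A}$.

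First I would introduce the affine rescalings $\pi_n^f:J_n^f(c_0)\to\bigl[-|I_{n+1}^f(c_0)|/|I_n^f(c_0)|\,,\,1\bigr]$, and analogously $\pi_n^g$, chosen so that $\pi_n^f$ conjugates the first-return map of $f$ to $J_n^f(c_0)$ to the normalized commuting pair $\mathcal{R}_0^n f$ (respectively for $g$). By \eqref{hypTeoB}, the two target intervals differ in length by a factor $1+O(\mu^n)$. Setting $\widetilde{h}_n:=\pi_n^g\circ h\circ (\pi_n^f)^{-1}$, an elementary computation gives
\[
|h(x)-x|\;\leq\; |I_n^f(c_0)|\,\bigl\|\widetilde{h}_n-\Id\bigr\|_{C^0}\;+\;C\,|J_n^f(c_0)|\,\mu^n,
\]
so the proof reduces to showing $\|\widetilde{h}_n-\Id\|_{C^0}\leq C'\mu_1^n$ for a suitable $\mu_1\in(0,1)$.

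Next I would exploit the fact that $\widetilde{h}_n$ conjugates $\mathcal{R}_0^n f$ to $\mathcal{R}_0^n g$, two multicritical pairs at $d_1$-distance at most $C\mu^n$. Given $\widetilde{x}$ in the common (approximate) domain, I would approximate $\widetilde{x}$ up to an error $C''\mu_2^m$ by an iterate $y_k=(\mathcal{R}_0^n f)^k(0)$, where $\mu_2\in(0,1)$ depends only on the real bounds (via Corollary \ref{corbeau} applied to the further renormalizations of $\mathcal{R}_0^n f$) and $k$ is at most the sum of the first $m$ continued-fraction denominators of the renormalized rotation number $G^n(\rho)$. Since $\widetilde{h}_n(y_k)=(\mathcal{R}_0^n g)^k(0)$ and $\widetilde{h}_n$ is uniformly quasisymmetric by the real bounds, the task reduces to bounding
\[
\bigl|(\mathcal{R}_0^n f)^k(0)-(\mathcal{R}_0^n g)^k(0)\bigr|.
\]
A standard iteration-of-differences argument, together with the uniform $C^1$-bound on the renormalized pair provided by Corollary \ref{c1bounds}, delivers a bound of the form $K^k\cdot C\mu^n$ for some $K$ depending only on the real bounds.

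The main obstacle will be choosing $m$ (proportional to $n$) so that both $\mu_2^m$ and $K^k\mu^n$ are simultaneously exponentially small in $n$. This calibration is exactly where the hypothesis $\rho\in\mathcal{A}$ enters in an essential way: condition \eqref{cond3A} of Definition \ref{setA}, together with the estimate of Lemma \ref{corolariolemayoccoz}, guarantees that $k$, which is of the order of $q_m^*(G^n(\rho))$, grows subexponentially in $n$ when $m$ is of order $n$, so that the amplification $K^k$ does not destroy the decay $\mu^n$. This mirrors the corresponding balancing step in the single-critical case \cite{dFdM}. With the desired $C^0$-bound on $\widetilde{h}_n$ in hand, undoing the rescaling yields the estimate stated in Proposition \ref{corokey}.
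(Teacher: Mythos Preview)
Your approach has a genuine gap at the amplification step. You propose to bound $\bigl|(\mathcal{R}_0^n f)^k(0)-(\mathcal{R}_0^n g)^k(0)\bigr|$ by the naive iteration estimate $K^k\,C\mu^n$, where $k$ is the number of iterates of the level-$n$ return map needed to approximate a given point to accuracy $\mu_2^m$, i.e.\ $k$ is of order $q_m^*(G^n(\rho))$. But for \emph{every} irrational rotation number the return times $q_m^*$ grow at least like the Fibonacci numbers, hence exponentially in $m$; condition~\eqref{cond3A} of Definition~\ref{setA} controls averages of $\log a_j$, not the size of $q_m$, and cannot possibly make $q_m^*$ subexponential. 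With $m$ proportional to $n$, the factor $K^k$ is then at least of order $K^{\phi^{cn}}$, which annihilates $\mu^n$. The balancing you describe is therefore impossible with this bound, and neither condition~\eqref{cond3A} nor Lemma~\ref{corolariolemayoccoz} helps here (both are used only later, in Lemma~\ref{ineqbeta}).

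The paper circumvents this by never iterating a single return map that many times. The key input is the hierarchical decomposition of Lemma~\ref{lematrayectoria}: a vertex $v\in\widehat{\mathcal{P}}_{n+p}\cap J_n^f(c_0)$ is written as $\varphi_1\circ\cdots\circ\varphi_L(x)$ with $L\le p$, where each $\varphi_j$ is a (possibly long) iterate of the return map at a \emph{deeper} level $m_j\in\{n,\dots,n+p\}$, restricted to the small domain $J_{m_j+1}^f\cup\widehat{\Delta}_{m_j}^f$ on which it has uniformly bounded derivative. This yields Lipschitz amplification $K^L\le K^p$, linear in $p$, rather than $K^{q_p}$. The $C^0$-difference $\|\varphi_j^*-\psi_j^*\|$ at each level is handled by the almost-parabolic estimate (Proposition~\ref{propoapm}), producing a factor $a_{m_j+1}^3\mu_2^{m_j}$; the cubes are then absorbed using condition~\eqref{cond2A} (not~\eqref{cond3A}). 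This gives Lemma~\ref{keylemma1}, from which Proposition~\ref{corokey} follows by choosing $p=\lfloor\sigma n\rfloor$ with $\sigma<-\log\mu_*/\log K$ and bounding $|x-h(x)|$ by the diameter of the $\widehat{\mathcal{P}}_{n+p}$-atom containing $x$ (Corollary~\ref{mu3}) plus the displacement of its endpoints. Note also that in the bicritical setting the initial point $x$ of the decomposition may be the free critical point $\mathfrak{c}_m$ rather than an iterate of $c_0$; this is why the auxiliary partitions and Remark~\ref{remdistcrit} are needed, and your sketch using only the orbit of $0$ does not address it.
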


Proposition \ref{corokey} will be a consequence of the following lemma, borrowed from \cite[Lemma 4.10]{dFdM}.

\begin{lemma}\label{keylemma1} There exist constants $C>0$, $K>1$ and $\mu_{*} \in (0,1)$ for which the following holds. Let $n,p \in \nt$ and let $v$ be a vertex of the auxiliary partition $\widehat{\mathcal{P}}_{n+p}$ (of $f$ around $c_0$) contained in $J_{n}^{f}(c_0)$. Then$$\big|v-h(v)\big| \leq C\,K^{p}\,\big|J_n^{f}(c_0)\big|\,\mu_{*}^{n}\,.$$Moreover, the three constants $C$, $K$ and $\mu_{*}$ only depend on the real bounds.
\end{lemma}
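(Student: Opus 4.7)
The plan is to proceed by induction on $p$ (with $n$ arbitrary), using Lemma \ref{lematrayectoria} as the combinatorial engine and the hypothesis \eqref{hypTeoB} as the analytic input. For fixed $n,p$ and a vertex $v\in\widehat{\mathcal{P}}_{n+p}\cap J_n^f(c_0)$, I apply Lemma \ref{lematrayectoria} to write
$$v=\varphi_1\circ\cdots\circ\varphi_L(x_0),\quad L\leq p,\quad \varphi_j=f^{k_jq_{m_j+1}+\sigma_jq_{m_j}},\quad n\leq m_1<\cdots<m_L\leq n+p,$$
with $x_0$ belonging to the controlled finite set described in that lemma. Since $h$ conjugates $f$ and $g$ and fixes both $c_0$ and $c_1$, one has $h(v)=\psi_1\circ\cdots\circ\psi_L(h(x_0))$, where $\psi_j$ denotes the $g$-iterate with the same exponent as $\varphi_j$.

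The first task is to estimate $|x_0-h(x_0)|$. If $x_0\in\{c_0,c_1\}$ the difference vanishes; if $x_0=f^{q_{m+2}}(c_0)$, the first inequality in \eqref{hypTeoB} combined with the real bounds gives $|x_0-h(x_0)|\leq C|I_{m+2}^f(c_0)|\mu^m$; and if $x_0=\mathfrak{c}_m$ is a free critical point, equal to $f^{-\ell}(c_1)$ for some $\ell<q_{m+1}+q_m$, the analogous hypothesis around $c_1$ together with the $C^1$-bounds of Corollary \ref{c1bounds} transports the estimate through the backward iterates. In every case $m\geq n$, and therefore $|x_0-h(x_0)|\leq C|J_n^f(c_0)|\mu^n$.

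The second task is the telescoping identity $v-h(v)=\sum_{j=1}^{L}A_j+B$, where $A_j=\psi_1\circ\cdots\circ\psi_{j-1}\circ\varphi_j\circ\cdots\circ\varphi_L(x_0)-\psi_1\circ\cdots\circ\psi_j\circ\varphi_{j+1}\circ\cdots\circ\varphi_L(x_0)$ and $B=\psi_1\circ\cdots\circ\psi_L(x_0)-\psi_1\circ\cdots\circ\psi_L(h(x_0))$. I bound each $|A_j|$ by the product of the Lipschitz constant of $\psi_1\circ\cdots\circ\psi_{j-1}$ and $\sup_y|\varphi_j(y)-\psi_j(y)|$. The Lipschitz constants are controlled by $K^{j-1}\leq K^p$ via Corollary \ref{c1bounds} together with the Koebe estimate in Remark \ref{obspontekoebe}; the pointwise difference $|\varphi_j-\psi_j|$ reduces, after undoing the rescaling implicit in the definition of $\mathcal{R}_0^{m_j}$, to the $C^1$-closeness $d_1(\mathcal{R}_0^{m_j}f,\mathcal{R}_0^{m_j}g)\leq C\mu^{m_j}$, producing $\sup|\varphi_j-\psi_j|\leq C\mu^{m_j}|I_{m_j}^f(c_0)|\leq C|J_n^f(c_0)|\mu^n$. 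The term $|B|$ is bounded by $K^p|x_0-h(x_0)|\leq CK^p|J_n^f(c_0)|\mu^n$ using the first task. Summing over $j$ (with $L\leq p$ absorbed into $K^p$) and setting $\mu_*:=\mu$ gives
$$|v-h(v)|\leq C\,K^p\,|J_n^f(c_0)|\,\mu_*^n,$$
which is the claim.

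The main obstacle is the rigorous passage from $C^1$-closeness of the \emph{normalized} commuting pairs $\mathcal{R}_0^{m_j}f$ and $\mathcal{R}_0^{m_j}g$ to a uniform pointwise $C^0$-closeness of the \emph{unnormalized} iterates $\varphi_j$ and $\psi_j$ on the appropriate atom of $\widehat{\mathcal{P}}_{m_j}$. Carrying this out requires three careful ingredients: (a)~controlling the small mismatch between the rescaling factors $|I_{m_j}^f(c_0)|$ and $|I_{m_j}^g(c_0)|$ through the first inequality in \eqref{hypTeoB}; (b)~absorbing the number $|k_j|$ of $\eta$-iterations, which by the admissible range in Lemma \ref{lematrayectoria} and the real bounds fits into universal constants; and (c)~a Koebe-type distortion argument in the spirit of Remark \ref{obspontekoebe}, propagating the estimate from a single fundamental domain to the entire atom that contains the point $y$ where $\varphi_j$ and $\psi_j$ are being compared.
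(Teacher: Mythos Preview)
Your approach follows the same skeleton as the paper's --- decompose $v$ via Lemma \ref{lematrayectoria}, telescope, control Lipschitz constants by Yoccoz--Koebe --- but it contains a genuine gap precisely at the step you flag as item (b). The integers $|k_j|$ do \emph{not} fit into universal constants: Lemma \ref{lematrayectoria} only gives $|k_j|\le\lceil a_{m_j+1}/2\rceil$, and the partial quotients $a_{m_j+1}$ are unbounded for a generic rotation number in $\mathcal{A}$. Consequently your claimed bound $\sup_y|\varphi_j(y)-\psi_j(y)|\le C\mu^{m_j}|I_{m_j}^f|$ is not justified: a naive telescope over the $|k_j|$ iterations of $f^{q_{m_j+1}}$ versus $g^{q_{m_j+1}}$ would produce at best a factor exponential in $|k_j|$, not a universal constant.

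The missing ingredient is Proposition \ref{propoapm}, the almost-parabolic estimate $|\phi^k(x)-\varphi^k(x)|\le Ck^3\|\phi-\varphi\|_{C^0}$ (itself resting on Yoccoz's Lemma \ref{lemyoccoz}), which gives only polynomial growth in $k$. With it, the paper obtains $\|\varphi_j^*-\psi_j^*\|_{C^0}\le C_9\,a_{m_j+1}^3\,\mu_2^{m_j}$, and the resulting sum $\sum_j a_{m_j+1}^3\mu_2^{m_j}$ is then controlled using condition \eqref{cond2A} of Definition \ref{setA} (namely $\tfrac{1}{n}\log a_n\to 0$), which forces $a_j^3\le C_{11}(1+\varepsilon)^j$ and yields $\mu_*=\sqrt{\mu_2}$ rather than $\mu$. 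Your argument invokes neither Proposition \ref{propoapm} nor condition \eqref{cond2A}; without them the proof works only for rotation numbers of bounded type. A secondary point: for the initial seed $x_0=\mathfrak{c}_m$, the paper uses Remark \ref{remdistcrit} (which extracts the $1/d$-th root of the $C^1$ distance and is the one place where the $C^1$ hypothesis, rather than $C^0$, is essential); your backward-transport idea for this case would need a more careful justification along those lines.
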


Before giving the proof of Lemma \ref{keylemma1}, let us see why it implies Proposition \ref{corokey}.

\begin{proof}[Proof of Proposition \ref{corokey}] Let $\mu\in(0,1)$ be given by Corollary \ref{mu3} and let $C>0$, $K>1$ and $\mu_{*} \in (0,1)$ be given by Lemma \ref{keylemma1}. We define $C_1=C_1(\mu,C)$ and $\mu_1=\mu_1(\mu,K,\mu_*)$ as follows: fix some $\sigma\in(0,1)$ with $\sigma<-\log\mu_*/\log K$, and let $\mu_1=\max\{K^{\sigma}\mu_*\,,\,\mu^{\sigma}\}$ and $C_1=C+1/\mu$. Now given $n\in\nt$ let $p=\lfloor \sigma n \rfloor$, and given $x \in J_n^{f}(c_0)$ let $v_1$ and $v_2$ be the endpoints of the atom of the auxiliary partition $\widehat{\mathcal{P}}_{n+p}$ that contains $x$. Since both $x$ and $h(x)$ belong to the convex hull of\, $[v_1,v_2]\cup\big[h(v_1),h(v_2)\big]$, we deduce from Corollary \ref{mu3} and Lemma \ref{keylemma1} that
\begin{align*}
\big|x-h(x)\big|&\leq\big|v_1-v_2\big|+\max_{i\in\{1,2\}}\big|v_i-h(v_i)\big|\leq\mu^p\,\big|J_n^{f}(c_0)\big|+C\,K^{p}\,\big|J_n^{f}(c_0)\big|\,\mu_{*}^{n}\\
&\leq\mu^{-1}(\mu^{\sigma})^n\,\big|J_n^{f}(c_0)\big|+C\,(K^{\sigma}\mu_*)^n\,\big|J_n^{f}(c_0)\big|\leq C_1\,\big|J_n^{f}(c_0)\big|\,\mu_{1}^{n}\,.
\end{align*}
\end{proof}

\begin{remark}\label{remkeyprop} Recall that the auxiliary partitions $\big\{\widehat{\mathcal{P}}_n\big\}_{n\in\nt}$ built in Section \ref{secinter} around $c_0$, can also be constructed around $c_1$. Since we are assuming exponential convergence of renormalization for both critical points (as in \eqref{hypTeoB} above), the statement of Lemma \ref{keylemma1} remains valid for those partitions around $c_1$. In particular, Proposition \ref{corokey} holds in $J_n^{f}(c_1)$ as well. This will be useful later, during the proof of Lemma~\ref{ineqbeta}.
\end{remark}

We proceed to make some comments before entering the proof of Lemma \ref{keylemma1}. First of all, since we are renormalizing around $c_0$, let us write $I_{n}^{f}$, $I_{n+1}^{f}$ and $J_{n}^{f}$ instead of $I_{n}^{f}(c_0)$, $I_{n+1}^{f}(c_0)$ and $J_{n}^{f}(c_0)$ respectively (and the same for $g$). For each $n\in\nt$ consider the \emph{$n$-th scaling ratio} of $f$, which is the positive number $s_n^f$ defined as$$s_n^f=\frac{\big|I_{n+1}^{f}\big|}{\big|I_{n}^{f}\big|}\,.$$By the real bounds (Theorem \ref{teobeau}), the sequence $\big\{s_n^f\big\}_{n\in\nt}$ is bounded away from zero and infinity. Recall from Section \ref{rmcm} that the $n$-th renormalization of $f$ at $c_0$ is the normalized multicritical commuting pair $\mathcal{R}^nf:[-s_{n}^{f},1]\to[-s_{n}^{f},1]$ given by
\[\mathcal{R}^nf=
\begin{dcases}
B_{n,f} \circ f^{q_{n}} \circ B_{n,f}^{-1} & \mbox{in $[-s_{n}^{f},0]$}\\[1ex]
B_{n,f} \circ f^{q_{n+1}} \circ B_{n,f}^{-1} & \mbox{in $[0,1]$\,,}\\
\end{dcases}
\]where $B_{n,f}$ is the unique orientation-preserving affine diffeomorphism between $J_{n}^{f}$ and $[-s_n^f,1]$.

\begin{remark}\label{remmeteq} For any $n\in\nt$ we have
\begin{equation}\label{eqdone}
\big\|\mathcal{R}^{n}f-\mathcal{R}^{n}g\big\|_{C^1([-\min\{s_n^f,s_n^g\},1])} \leq C\,\mu^{n}\,.
\end{equation}
Indeed, by hypothesis (recall Definition \ref{pseudometric}), the difference $\big|s_n^f-s_n^g\big|$ goes to zero exponentially fast. Since both sequences $\big\{s_n^f\big\}_{n\in\nt}$ and $\big\{s_n^g\big\}_{n\in\nt}$ are bounded away from zero and infinity, the two M\"obius transformations fixing $0$ and $1$, and mapping $-s_n^f$ ($-s_n^g$ respectively) to $-1$ converge together exponentially fast (and also its inverses). Since $d_{1}(\mathcal{R}^nf, \mathcal{R}^ng) \leq C\, \mu^{n}$, we deduce \eqref{eqdone}.
\end{remark}

For all $i \geq n$ let $A_{i,f}=B_{n,f} \circ B_{i,f}^{-1}$\,, which is just the linear contraction given by$$A_{i,f}(t)=\frac{\big|I_i^f\big|}{\big|I_n^f\big|}\,t\,.$$Consider the interval $\Lambda_i^f\subset[-s_n^f,1]$ given by $\Lambda_i^f=B_{n,f}(J_{i}^f)$, and consider the bicritical commuting pair $f_{i}^{*}\colon\Lambda_i^f\to\Lambda_i^f$ given by\[f_{i}^{*}=A_{i,f} \circ \mathcal{R}^{i}f\circ A_{i,f}^{-1}\,.\]The contraction $A_{i,f}$, the interval $\Lambda_i^f$ and the pair $f_{i}^{*}$ depend on $n$. However, since $n$ is fixed, we omit to mention it just to simplify notation. In the same way, define the corresponding objects for $g$.

\begin{lemma}\label{ineqf*} We have$$\big\|f_{i}^{*}-g_{i}^{*}\big\|_{C^0(\Lambda_i^f\cap\Lambda_i^g)} \leq C\, \mu^{n}\,\frac{\big|I_{i}^f\big|}{\big|I_{n}^f\big|}\,.$$\end{lemma}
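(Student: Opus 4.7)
Writing $\lambda_f=|I_i^f|/|I_n^f|$ and $\lambda_g=|I_i^g|/|I_n^g|$, and letting $R_f=\mathcal{R}^if$ and $R_g=\mathcal{R}^ig$, the first observation is the algebraic identity
\[
f_i^*(t)-g_i^*(t) \;=\; \lambda_f\,R_f(t/\lambda_f)\;-\;\lambda_g\,R_g(t/\lambda_g)\,,
\]
valid for $t\in\Lambda_i^f\cap\Lambda_i^g$. The plan is to insert the intermediate term $\lambda_f\,R_g(t/\lambda_f)$, apply the triangle inequality, and then insert a further intermediate term $\lambda_g\,R_g(t/\lambda_f)$ to separate the effect of replacing $R_f$ by $R_g$ from the effect of replacing $\lambda_f$ by $\lambda_g$.

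The first ingredient is the comparison of the two scaling factors. Writing $\lambda_f/\lambda_g=(|I_i^f|/|I_i^g|)(|I_n^g|/|I_n^f|)$ and using the hypothesis \eqref{hypTeoB} at levels $n$ and $i\geq n$, together with $\mu^i\leq\mu^n$, I get $|\lambda_f-\lambda_g|\leq C\,\mu^n\,\lambda_f$, and also $\lambda_f\asymp\lambda_g$. The second ingredient is Remark \ref{remmeteq}, which gives $\|R_f-R_g\|_{C^1}\leq C\mu^i\leq C\mu^n$ on the common domain $[-\min\{s_i^f,s_i^g\},1]$; in particular the $C^0$ distance is $O(\mu^n)$ and $\|DR_g\|_{C^0}$ is uniformly bounded (the latter being a consequence of the real bounds via Corollary \ref{c1bounds}, and also $\|R_g\|_{C^0}\leq\max\{s_i^g,1\}\leq C$).

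With these at hand, the first piece $\lambda_f[R_f(t/\lambda_f)-R_g(t/\lambda_f)]$ is bounded by $\lambda_f\cdot C\mu^n$. The middle piece $(\lambda_f-\lambda_g)\,R_g(t/\lambda_f)$ is bounded by $C\lambda_f\mu^n\cdot\|R_g\|_{C^0}\lesssim \lambda_f\mu^n$. For the last piece $\lambda_g\,[R_g(t/\lambda_f)-R_g(t/\lambda_g)]$, I apply the mean value theorem:
\[
\bigl|R_g(t/\lambda_f)-R_g(t/\lambda_g)\bigr|
\;\leq\;\|DR_g\|_{C^0}\cdot |t|\cdot\Bigl|\tfrac{1}{\lambda_f}-\tfrac{1}{\lambda_g}\Bigr|\,,
\]
and use $|t|\leq|\Lambda_i^f|\asymp\lambda_f$ (because $|J_i^f|\asymp|I_i^f|$ and $|J_n^f|\asymp|I_n^f|$ by the real bounds) together with $|1/\lambda_f-1/\lambda_g|=|\lambda_f-\lambda_g|/(\lambda_f\lambda_g)\leq C\mu^n/\lambda_g$; this yields the bound $\lambda_g\cdot C\cdot\lambda_f\cdot C\mu^n/\lambda_g=C\lambda_f\mu^n$. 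Adding the three contributions and recalling $\lambda_f=|I_i^f|/|I_n^f|$ gives the stated inequality.

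The only delicate point is verifying that $|t|\lesssim\lambda_f$ uniformly on $\Lambda_i^f\cap\Lambda_i^g$: this is what produces the sharp factor $|I_i^f|/|I_n^f|$ on the right-hand side rather than a mere $O(\mu^n)$, and it relies on the real bounds for $f$ (to compare $|J_i^f|$ with $|I_i^f|$ and $|J_n^f|$ with $|I_n^f|$). Everything else is routine triangle inequality book-keeping, once the key identity and the comparability $|\lambda_f-\lambda_g|\lesssim\mu^n\lambda_f$ are in hand.
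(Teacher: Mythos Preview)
Your proof is correct and follows essentially the same approach as the paper: both arguments write $f_i^*(t)-g_i^*(t)=\lambda_f R_f(t/\lambda_f)-\lambda_g R_g(t/\lambda_g)$ and split via the triangle inequality into a piece controlled by $\|R_f-R_g\|_{C^0}\leq C\mu^n$, a piece controlled by $|\lambda_f-\lambda_g|\leq C\mu^n\lambda_f$, and a Lipschitz piece using the $C^1$-bounds on the renormalizations. The only cosmetic difference is that the paper cites estimate \eqref{estratios} from Lemma \ref{corolemma4.7} for the ratio comparison $|\lambda_f-\lambda_g|\lesssim\mu^n\lambda_f$, whereas you derive it directly from hypothesis \eqref{hypTeoB}; and the paper inserts the intermediate point $A_{i,f}(\mathcal{R}^ig(y_g))$ (i.e.\ $\lambda_f R_g(t/\lambda_g)$) instead of your $\lambda_f R_g(t/\lambda_f)$, but the resulting bounds are the same.
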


\begin{proof}[Proof of Lemma \ref{ineqf*}] Take some $y\in\Lambda_i^f\cap\Lambda_i^g$, and let $y_f=A_{i,f}^{-1}(y)$ and $y_g=A_{i,g}^{-1}(y)$. Note that $|y|\leq\big|J_i^f\big|/\big|I_{n}^f\big|$. By combining the real bounds (Theorem \ref{teobeau}) with estimate \eqref{estratios} in Lemma \ref{corolemma4.7} we obtain$$|y_f-y_g|=\left|\frac{|I_{n}^f|}{|I_{i}^f|}- \frac{|I_{n}^g|}{|I_{i}^g|}\right|\,|y|\leq C_1\,\mu_1^{n}\,\frac{|I_{n}^f|}{|I_{i}^f|}\,\frac{|J_{i}^f|}{|I_{n}^f|}=C_1\,\mu_1^{n}\,\frac{|J_{i}^f|}{|I_{i}^f|}\leq C_2\,\mu_1^{n}.$$From Corollary \ref{c1bounds} (the $C^1$-bounds) and the exponential convergence of renormalization (recall Remark \ref{remmeteq}), we deduce that$$\big|\mathcal{R}^{i}f(y_f)-\mathcal{R}^{i}g(y_g)\big| \leq C_3\,|y_f-y_g|+C_4\,\mu_2^{i} \leq C_3\,C_2\,\mu_1^n+C_4\,\mu_2^i \leq C_5\,\mu_3^{n}\,,$$where $\mu_3=\max\{\mu_1,\mu_2\}$. Using again Lemma \ref{corolemma4.7} we finally obtain
\begin{align*}
\big|f_{i}^{*}(y)-g_{i}^{*}(y)\big|&=\big|A_{i,f}\big(\mathcal{R}^{i}f(y_f)\big)-A_{i,g}\big(\mathcal{R}^{i}g(y_g)\big)\big|\\
&\leq\frac{\big|I_i^f\big|}{\big|I_n^f\big|}\,\big|\mathcal{R}^{i}f(y_f)-\mathcal{R}^{i}g(y_g)\big|+\left|\frac{|I_{i}^f|}{|I_{n}^f|}- \frac{|I_{i}^g|}{|I_{n}^g|}\right|\,\big|\mathcal{R}^{i}g(y_g)\big|\\
&\leq C_5\,\mu_3^{n}\,\frac{\big|I_i^f\big|}{\big|I_n^f\big|}+C_1\,\mu_1^{n}\,\frac{\big|I_i^f\big|}{\big|I_n^f\big|}\,\max\{1,s_i^g\}\leq C_6\,\mu_3^{n}\,\frac{\big|I_i^f\big|}{\big|I_n^f\big|}\,.
\end{align*}
\end{proof}

\begin{remark}\label{remdistcrit} As it is not difficult to prove, there exists a constant $C_0>1$, depending only on the real bounds for $f$, with the following property: let $m$ be a two-bridges level for $f$ at $c_0$ (see Definition~\ref{deftwobridgeslevel}), and let $\mathfrak{c}_m^f$ be the free critical point of the first return map of $f$ to $J_m^f$ (just as in Section \ref{secinter}). Note that $\mathfrak{c}_m^g=h(\mathfrak{c}_m^f)$ is the corresponding free critical point for the return of $g$ to $J_m^g$, and recall that $d>1$ is the maximum of the criticalities of $f$ at $c_0$ and $c_1$. Then$$\big|B_{m,f}(\mathfrak{c}_m^f)-B_{m,g}(\mathfrak{c}_m^g)\big| \leq C_0\,d_{1}(\mathcal{R}^mf,\mathcal{R}^mg)^{1/d}\,.$$
\end{remark}

Combined with hypothesis \eqref{hypTeoB}, Remark \ref{remdistcrit} gives us$$\big|B_{m,f}(\mathfrak{c}_m^f)-B_{m,g}(\mathfrak{c}_m^g)\big| \leq C_0\,C^{1/d}\,(\mu^m)^{1/d}=C_7\,(\mu^{1/d})^m\,.$$

This is the only place in this paper where we need the assumption that exponential contraction of renormalization  holds in the $C^1$ metric (instead of just $C^0$, which is the assumption in \cite{dFdM}), to be able to control the position of the critical points for the return maps. Finally, let us recall \cite[Proposition 4.1]{dFdM}.
  
\begin{proposition}\label{propoapm} Let $\phi$ and $\varphi$ be two almost parabolic maps with the same length $\ell$ defined on the same interval. Then for all $x \in J_1(\phi) \cap J_1(\varphi)$ and for all $0\leq k \leq \ell/2$, we have
\begin{equation*}
|\phi^k(x)-\varphi^k(x)| \leq C\, k^3\,\|\phi-\varphi\|_{C^0}.
\end{equation*}
\end{proposition}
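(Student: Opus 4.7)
The natural approach is a telescoping identity combined with pointwise derivative estimates for iterates of $\phi$. Writing
\[
\phi^{k}(x)-\varphi^{k}(x)\;=\;\sum_{j=0}^{k-1}\Bigl[\phi^{k-j-1}\bigl(\varphi^{j+1}(x)\bigr)-\phi^{k-j-1}\bigl(\phi(\varphi^{j}(x))\bigr)\Bigr]
\]
and applying the mean value theorem to each summand reduces the estimate to
\[
|\phi^{k}(x)-\varphi^{k}(x)|\;\leq\;\|\phi-\varphi\|_{C^{0}}\,\sum_{j=0}^{k-1}\bigl\|D\phi^{k-j-1}\bigr\|_{C^{0}(U_{j})}\,,
\]
where $U_{j}$ denotes the interval joining $\phi(\varphi^{j}(x))$ to $\varphi^{j+1}(x)$. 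Proving the desired inequality then boils down to a polynomial-in-$k$ bound on the sum of derivatives of iterates of $\phi$.

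To control each $\bigl\|D\phi^{k-j-1}\bigr\|_{C^{0}(U_{j})}$, I would combine Koebe's distortion principle (available because $\phi$ has negative Schwarzian derivative by hypothesis) with Yoccoz's Lemma \ref{lemyoccoz}. By Yoccoz's Lemma, consecutive fundamental domains of $\phi$ have comparable lengths, with constants depending only on the width $\sigma$. This supplies the Koebe space required to conclude that the iterate $\phi^{k-j-1}$ has distortion bounded in terms of $\sigma$ alone on a uniform neighborhood of $J_{j+2}(\phi)$, and hence its derivative there is comparable to
\[
\frac{|J_{k+1}(\phi)|}{|J_{j+2}(\phi)|}\;\asymp\;\left(\frac{j+2}{k+1}\right)^{\!2},
\]
which is at most $1$ since $j+1<k\leq\ell/2$.

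The argument is then closed by a bootstrap induction on $k$: assuming the claimed estimate for all smaller iterates, one checks that the orbit of $\varphi$ has not yet drifted out of the Koebe neighborhood of $J_{j+2}(\phi)$, so that the derivative bound above applies on $U_{j}$; summing over $j$ then yields the polynomial estimate. The main obstacle is precisely this bootstrap step: to justify the Koebe-based derivative bound on $U_{j}$ one needs $\varphi^{j+1}(x)$ to lie close to $J_{j+2}(\phi)$, which in turn requires the inductive hypothesis on the accumulated error $|\phi^{j}(x)-\varphi^{j}(x)|$. The generous exponent $3$ in the statement leaves comfortable room to absorb the constants produced by this simultaneous control of derivative estimates and of orbital drift; indeed, a direct summation of the Koebe--Yoccoz estimates gives even the stronger bound $C\,k\,\|\phi-\varphi\|_{C^{0}}$, from which the stated $C\,k^{3}\,\|\phi-\varphi\|_{C^{0}}$ follows trivially.
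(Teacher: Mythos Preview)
The paper does not supply its own proof of Proposition~\ref{propoapm}; it is quoted from \cite[Proposition~4.1]{dFdM}, with the remark that the argument is ``based on the geometric inequalities given by Yoccoz's Lemma~\ref{lemyoccoz}''. Your outline---telescoping identity, mean value theorem, Koebe--Yoccoz control of $D\phi^{k-j-1}$ on a neighbourhood of $J_{j+2}(\phi)$, and a bootstrap ensuring that $\varphi^{j+1}(x)$ has not yet escaped that neighbourhood---is exactly the standard argument, and matches what the paper indicates.

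There is, however, a genuine overreach in your last sentence. The summation
\[
\sum_{j<k}\bigl\|D\phi^{k-j-1}\bigr\|_{C^{0}(U_{j})}\;\lesssim\;\sum_{j<k}\left(\frac{j+2}{k+1}\right)^{2}\;\asymp\;k
\]
is valid \emph{only while the bootstrap hypothesis holds}, i.e.\ while the accumulated drift at step $j$ stays below the Koebe space around $J_{j+2}(\phi)$, which by Yoccoz's Lemma has size $\asymp |I|/j^{2}$. Feeding in the linear drift bound $C_{0}\,j\,\epsilon$ (with $\epsilon=\|\phi-\varphi\|_{C^{0}}$), this forces $j\lesssim(|I|/\epsilon)^{1/3}$. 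Beyond the threshold $k_{0}\asymp(|I|/\epsilon)^{1/3}$ the Koebe bound on $U_{j}$ is no longer available, and one must fall back on the trivial estimate $|\phi^{k}(x)-\varphi^{k}(x)|\lesssim|I|$. For $k>k_{0}$ one has $|I|\asymp k_{0}^{3}\epsilon\leq k^{3}\epsilon$, so the cubic bound survives; but the linear bound $Ck\,\epsilon$ would require $k\gtrsim k_{0}^{3}$, which already fails for $k$ just above $k_{0}$. In short, the exponent $3$ is not slack left over by your argument---it is precisely what the dichotomy ``bootstrap regime versus trivial bound'' produces, and the unconditional estimate $Ck\,\|\phi-\varphi\|_{C^{0}}$ does not follow from the method you describe.
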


Let us mention that Proposition \ref{propoapm}, which is based on the geometric inequalities given by Yoccoz's Lemma \ref{lemyoccoz}, has been significantly improved in \cite[Section 6]{GMdM} (see for instance Lemma 6.6 on page 2155 and Proposition 6.18 on page 2163). However, such sharper estimates will not be needed in the present paper. We are ready to start the proof of Lemma \ref{keylemma1}.
      
\begin{proof}[Proof of Lemma \ref{keylemma1}] By Lemma \ref{lematrayectoria} there exist $L\in\{1,...,p\}$ and $n \leq m_1 < ... < m_L \leq n+p$ such that\, $v = \varphi_1 \circ \cdots \circ \varphi_L(x)$\,, where:
\begin{itemize}
\item For each $j\in\{1,...,L\}$ we have $\varphi_j= f^{k_jq_{m_j+1}+\sigma_jq_{m_j}}$ for some $\sigma_j\in\{0,1\}$ and $k_j\in\Z$, where each $k_j$ either satisfies $|k_j|\leq\ell(m_j)$ and $|k_j| \leq r(m_j)$ or $|k_j| \leq \lceil a_{m_j+1}/2 \rceil$, depending on whether $m_j$ is or is not a two-bridges level for $f$ at $c_0$.
\item For each $j\in\{1, ... , L-1\}$, the point $\varphi_{j+1} \circ \cdots \circ \varphi_L(x)$ either belongs to $J_{m_j+1}^f\cup\widehat{\Delta}_{m_j}^f$ or to $J_{m_j+1}^f$, depending on whether $m_j$ is or is not a two-bridges level.
\item There exists $m\in\{m_L,...,n+p\}$ such that the initial condition $x$ either belongs to $\big\{c_0\,,\,f^{q_{m+2}}(c_0)\,,\,\mathfrak{c}_{m}^f\big\}$ or to $\big\{c_0\,,\,f^{q_{m+2}}(c_0)\big\}$, depending on whether $m$ is or is not a two-bridges level.
\end{itemize}

Let $w=h(v)$ and $y=h(x)$, and note that $w=\psi_1\circ\cdots\circ\psi_L(y)$, where $\psi_j=h \circ \varphi_j \circ h^{-1}$ for each $j\in\{1,...,L\}$. In order to estimate $|v-w|$ we will first estimate $|v^{*}-w^{*}|$\,, where $v^{*}=B_{n,f}(v)$ and $w^{*}=B_{n,g}(w)$. Let $\mu_2\in(0,1)$ be defined as $\mu_2=\max\{\mu^{1/d},\mu_1\}$, where $\mu$ is given by hypothesis \eqref{hypTeoB}, $d>1$ is the maximum of the criticalities of $f$ at $c_0$ and $c_1$, and $\mu_1$ is given by Corollary~\ref{corbeau}. We start by considering $x^{*}=B_{n,f}(x)$ and $y^{*}=B_{n,g}(y)$, and we claim that $|x^{*}-y^{*}| \leq C_8\,\mu_2^{m}$. Indeed, note that
\begin{align*}
x^{*}-y^{*}&=A_{m,f}\big(B_{m,f}(x)\big)-A_{m,f}\big(B_{m,g}(y)\big)+\big(A_{m,f}-A_{m,g}\big)\big(B_{m,g}(y)\big)\\
&=\frac{\big|I_{m}^f\big|}{\big|I_{n}^f\big|}\,\big(B_{m,f}(x)-B_{m,g}(y)\big)+\left(\frac{\big|I_{m}^f\big|}{\big|I_{n}^f\big|}-\frac{\big|I_{m}^g\big|}{\big|I_{n}^g\big|}\right)\,B_{m,g}(y).
\end{align*}From the exponential convergence of renormalization (recall remarks \ref{remmeteq} and \ref{remdistcrit} above) we know that $\big|B_{m,f}(x)-B_{m,g}(y)\big| \leq C_7\,(\mu^{1/d})^{m} \leq C_7\,\mu_2^m$, while from estimate \eqref{estratios} in Lemma \ref{corolemma4.7} we have$$\left|\frac{\big|I_{m}^f\big|}{\big|I_{n}^f\big|}-\frac{\big|I_{m}^g\big|}{\big|I_{n}^g\big|}\right| \leq C_1\,\mu^n\,\frac{\big|I_{m}^f\big|}{\big|I_{n}^f\big|}\leq C_1\,\mu_2^n\,\frac{\big|I_{m}^f\big|}{\big|I_{n}^f\big|}\,.$$From the real bounds (see Corollary \ref{corbeau}) we know that$$\big|I_{m}^f\big| \leq \mu_1^{m-n} \big|I_{n}^f\big|\leq \mu_2^{m-n} \big|I_{n}^f\big|\,,$$and since $\big|B_{m,g}(y)\big|\leq\max\{1,s_{m}^g\}$ is bounded, we obtain the claim.

Now for each $j\in\{1,...,L\}$ let $\varphi_{j}^{*}=B_{n,f}\circ \varphi_{j} \circ B_{n,f}^{-1}$ and $\psi_{j}^{*}=B_{n,g}\circ \psi_{j}\circ B_{n,g}^{-1}$. We claim that$$\big\|\varphi_{j}^{*}-\psi_{j}^{*}\big\|_{C^0(\Lambda_{m_j}^f\cap\Lambda_{m_j}^g)} \leq  C_9\,a_{m_j+1}^3\,\mu_2^{m_j}\,.$$Indeed, combining Proposition \ref{propoapm} with Lemma \ref{ineqf*} we have for any $j\in\{1,...,L\}$ and $y\in\Lambda_{m_j}^f\cap\Lambda_{m_j}^g$ that$$\big|\varphi_{j}^{*}(y)- \psi_{j}^{*}(y)\big| \leq  C_9\, |k_j|^3 \, \mu^{n}\,\frac{\big|I_{m_j}^f\big|}{\big|I_{n}^f\big|}\leq  C_9\, |k_j|^3 \, \mu_2^{n}\,\frac{\big|I_{m_j}^f\big|}{\big|I_{n}^f\big|}\,.$$As before, we know from the real bounds (see Corollary \ref{corbeau}) that$$\big|I_{m_j}^f\big| \leq \mu_1^{m_j-n} \big|I_{n}^f\big|\leq \mu_2^{m_j-n} \big|I_{n}^f\big|\,,$$and then$$\big|\varphi_{j}^{*}(y)- \psi_{j}^{*}(y)\big| \leq  C_9\, |k_j|^3\,\mu_2^{m_j}\leq  C_9\,a_{m_j+1}^3\,\mu_2^{m_j}$$for any $j\in\{1,...,L\}$ and $y\in\Lambda_{m_j}^f\cap\Lambda_{m_j}^g$, as claimed.

Finally, we deduce from Yoccoz's Lemma \ref{lemyoccoz} (combined with Koebe's distortion principle) that there exists a constant $K>1$, depending only on the real bounds for $f$, such that each $\varphi_j$ is $C^1$ uniformly bounded on $J_{m_j+1}^f\cup\widehat{\Delta}_{m_j}^f$ (or just on $J_{m_j+1}^f$, depending on whether $m_j$ is or is not a two-bridges level).

With these three facts at hand, we estimate the distance between $\varphi_{L}^{*}(x^{*})$ and $\psi_{L}^{*}(y^{*})$ as follows:
\begin{align*}
\big|\varphi_{L}^{*}(x^{*})- \psi_{L}^{*}(y^{*})\big|&\leq
\big|\varphi_{L}^{*}(x^{*})- \varphi_{L}^{*}(y^{*})\big|+\big|\varphi_{L}^{*}(y^{*})- \psi_{L}^{*}(y^{*})\big|\leq K\,|x^{*}-y^{*}|+C_9\,a_{m_L+1}^3\,\mu_2^{m_L}\\
&\leq K\,C_8\,\mu_2^{m}+C_9\,a_{m_L+1}^3\,\mu_2^{m_L}\,.
\end{align*}In the same way, we estimate now the distance between $\varphi_{L-1}^{*}\big(\varphi_{L}^{*}(x^{*})\big)$ and $\psi_{L-1}^{*}\big(\psi_{L}^{*}(y^{*})\big)$\,:
\begin{align*}
\big|\varphi_{L-1}^{*}\big(\varphi_{L}^{*}(x^{*})\big)-\psi_{L-1}^{*}\big(\psi_{L}^{*}(y^{*})\big)\big|&\leq K\,\big|\varphi_{L}^{*}(x^{*})- \psi_{L}^{*}(y^{*})\big|+\big|\varphi_{L-1}^{*}\big(\psi_{L}^{*}(y^{*})\big)-\psi_{L-1}^{*}\big(\psi_{L}^{*}(y^{*})\big)\big|\\
&\leq K^2\,C_8\,\mu_2^{m}+K\,C_9\,a_{m_L+1}^3\,\mu_2^{m_L}+C_9\,a_{m_{L-1}+1}^3\,\mu_2^{m_{L-1}}\,.
\end{align*}
Proceeding inductively, we obtain
\begin{align*}
|v^*-w^*|&=\big|\varphi_1^{*}\circ\cdots\circ\varphi_L^{*}(x^{*})-\psi_1^{*}\circ\cdots\circ\psi_L^{*}(y^{*})\big|\\
&\leq C_{10}\left(K^{L}\,\mu_2^{m}+\sum_{j=1}^{j=L}K^{j-1}\,a_{m_{j}+1}^3\,\mu_2^{m_{j}}\right)\leq C_{10}\,K^{p}\left(\mu_2^{m}+\sum_{j=1}^{j=L}a_{m_{j}+1}^3\,\mu_2^{m_{j}}\right).
\end{align*}By condition \eqref{cond2A} in Definition \ref{setA}, we have $\lim_{j \to \infty} (a_j^3)^{1/j}=1$. Therefore, defining $\varepsilon>0$ by $(1+\varepsilon)\sqrt{\mu_2}=1$, there exists a constant $C_{11}=C_{11}(\varepsilon)>0$ such that $a_j^3 < C_{11}\,(1+ \varepsilon)^j$ for all $j \in \nt$. Hence$$a_{j+1}^3\,\mu_2^{j}<C_{11}\,(1+\varepsilon)^{j+1}\,\mu_2^{j}=C_{11}\,\frac{1}{\sqrt{\mu_2}}\left(\frac{\mu_2}{\sqrt{\mu_2}}\right)^j=C_{12}\,(\sqrt{\mu_2})^j.$$Defining $\mu_*\in(0,1)$ as $\mu_*=\sqrt{\mu_2}=\max\{\mu^{1/2d},\sqrt{\mu_1}\}$, we have
\begin{align*}
|v^*-w^*|&\leq C_{10}\,K^{p}\left(\mu_2^{m}+\sum_{j=1}^{j=L}a_{m_{j}+1}^3\,\mu_2^{m_{j}}\right)\leq C_{10}\,K^{p}\left(\mu_*^{m}+C_{12}\,\sum_{j=1}^{j=L}\mu_*^{m_{j}}\right)\\
&\leq C_{13}\,K^{p}\left(\mu_*^{m}+\sum_{j=n}^{+\infty}\mu_*^{j}\right)=C_{13}\,K^{p}\left(\mu_*^{m}+\frac{\mu_*^n}{1-\mu_*}\right)\leq C_{14}\,K^p\,\mu_*^n\,.
\end{align*}Combining this with hypothesis \eqref{hypTeoB}, we finally obtain
\begin{align*}
|v-w|&=\big|B_{n,f}^{-1}(v^{*})-B_{n,g}^{-1}(w^{*})\big|\leq |I_{n}^f|\,|v^{*}-w^{*}|+\big|\,|I_{n}^f|-|I_{n}^g|\,\big|\,|w^*|\\
&\leq |I_{n}^f|\,C_{14}\,K^p\,\mu_*^n+|I_{n}^f|\,\left|1-\frac{|I_{n}^g|}{|I_{n}^f|}\right|\,\max\{1,s_{n}^g\}\\
&\leq |I_{n}^f|\big(C_{14}\,K^p\,\mu_*^n+C\max\{1,s_{n}^g\}\,\mu^n\big)\leq C_{15}\,K^{p}\,|J_n^{f}|\,\mu_{*}^{n}\,.
\end{align*}This finishes the proof of Lemma \ref{keylemma1}.
\end{proof}

We consider now the fine grid constructed in Section \ref{sec:anewpartition}, in order to establish the final estimates needed for the proof of Theorem \ref{theoremb}. Following \cite[page 367]{dFdM}, we define the \emph{level} of an interval $I \in \mathcal{Q}_n$, denoted $\ell(I)$, as the largest $m \leq n$ such that $I$ is contained in an element of $\widehat{\mathcal{P}}_m$. Theorem \ref{theoremb} will be a straightforward consequence of Proposition \ref{criterion} and the following two lemmas, which are \cite[Lemma 4.12]{dFdM} and \cite[Lemma 4.13]{dFdM} respectively.

\begin{lemma}\label{ineqlevelnm} If $\mathcal{Q}_n$ contains an interval of level $m$, then$$n\leq C_0 \, \sum_{j=1}^{m} \log (1+a_j)$$for some constant $C_0>0$. In particular, if $\rho(f)$ satisfies condition \eqref{cond1A} in Definition \ref{setA}, then $m\geq c_1\, n$, for some constant $0<c_1 <1$ that depends only on $\rho(f)$.
\end{lemma}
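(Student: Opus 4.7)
The plan is to bound the number of fine grid refinements needed to dissolve all bridges of the auxiliary partitions up to level $m$. The key observation is that if $I\in\mathcal{Q}_n$ has level exactly $m$, then by definition $I$ sits inside some atom $A$ of $\widehat{\mathcal{P}}_m$ but is not contained in any single atom of $\widehat{\mathcal{P}}_{m+1}$; since $\widehat{\mathcal{P}}_{m+1}$ subdivides $A$, the interval $I$ must be trapped inside a bridge of $\widehat{\mathcal{P}}_{m+1}|_A$ (or equal it). Consequently, to bound $n$ in terms of $m$ it suffices to estimate, for each $k\leq m$, how many consecutive stages of the fine grid an interval can remain at level $k$ before being pushed to level $k+1$.

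First I would trace $I$ back through its ancestors $I=I_n\subset I_{n-1}\subset\cdots\subset I_1\in\mathcal{Q}_1$; by definition of level and by the refinement property \eqref{item1FG}, the sequence $\ell(I_1)\leq\ell(I_2)\leq\cdots\leq\ell(I_n)=m$ is non-decreasing. Then I would argue that while the level stays equal to some fixed $k$, we are looking at a descendant of a bridge $\widehat{G}\in\widetilde{\mathcal{P}}_{k+1}$ contained in a single atom of $\widehat{\mathcal{P}}_k$. For a regular bridge (at most $999$ atoms of $\widehat{\mathcal{P}}_{k+1}$), one stage of the fine grid breaks it completely, by case $(2)$(ii) of the construction in Proposition~\ref{gridpprop}. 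For a saddle-node bridge $\widehat{G}$, Lemma~\ref{generalbalancedecomp} gives a balanced decomposition of depth $d+1$ with $2^{d+1}\leq\ell/2$, where $\ell$ is the number of atoms of $\widehat{\mathcal{P}}_{k+1}$ making up $\widehat{G}$; in particular $d=O(\log\ell)=O(\log(1+a_{k+1}))$ since $\ell$ is of order $a_{k+1}$. Inspecting cases $(3)$ and $(4)$ of the construction of $\mathcal{Q}_n$, each further stage either splits a central interval $M_i$ into $L_i\cup M_{i+1}\cup R_i$, or halves a lateral interval $L_i$ (respectively $R_i$) of $2^i$ fundamental domains, or halves the final central interval $M_{d+1}$; hence after a total of $O(d)=O(\log(1+a_{k+1}))$ stages every descendant is a single atom of $\widehat{\mathcal{P}}_{k+1}$, and therefore has level $\geq k+1$.

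Summing the contributions over $k=0,1,\ldots,m$ and absorbing the $O(1)$ term coming from stage $n=1$ (and from the harmless one-step case of regular bridges), we obtain
\[
n\;\leq\;\sum_{k=0}^{m}O(\log(1+a_{k+1}))\;\leq\;C_0\sum_{j=1}^{m}\log(1+a_j)\,,
\]
after adjusting $C_0$ to absorb the last term if necessary. This establishes the first assertion. For the ``in particular'' part, condition \eqref{cond1A} in Definition~\ref{setA} gives $\sum_{j=0}^{n}\log a_j\leq Cn$ for some $C=C(\rho)$ and all $n$ large enough; since $\log(1+a_j)\leq\log 2+\log a_j$, we get $\sum_{j=1}^m\log(1+a_j)\leq C''m$, which inserted into the previous estimate yields $n\leq C_0C''m$, i.e.\ $m\geq c_1 n$ with $c_1=1/(C_0 C'')$.

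The main subtlety — and really the only obstacle — will be the careful bookkeeping of how the balanced decomposition is consumed by the fine grid. One must handle simultaneously the progressive opening of the central intervals (case $(3)$), the halving of lateral intervals (case $(4)$), and the final central interval $M_{d+1}$, whose length in fundamental domains is comparable to $\ell/2$ and therefore also requires $O(\log(1+a_{k+1}))$ halvings. All comparability constants are absolute (depending only on the real bounds, through Lemma~\ref{regularandbridges} and Lemma~\ref{generalbalancedecomp}), so the final constant $C_0$ is universal, exactly as in the single critical point case treated in \cite[Lemma~4.12]{dFdM}.
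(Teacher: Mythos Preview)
Your argument is correct and is precisely the approach the paper has in mind: the authors omit the proof entirely, referring to \cite[Lemma~4.12]{dFdM}, and what you have written is an accurate adaptation of that argument to the present setting (tracing ancestors, bounding the number of fine-grid stages at each fixed level by the depth of the balanced decomposition, and summing). The bookkeeping you flag in your final paragraph is indeed the only point requiring care, and your estimates for the depth $d$ and the halving of $M_{d+1}$ and the lateral intervals are all correct.
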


We omit the proof of Lemma \ref{ineqlevelnm}, being the same as in \cite[Lemma 4.12]{dFdM}.

\begin{lemma}\label{ineqbeta} If $\rho(f)$ satisfies conditions \eqref{cond2A} and \eqref{cond3A} in Definition \ref{setA}, there exists $\beta\in(0,1)$ with the following property. If $L$ and $R$ are adjacent intervals of $\mathcal{Q}_n$ with $\ell(L)\geq m$ and $\ell(R)\geq m$, then$$\left|\,\frac{|L|}{|R|}-\frac{|h(L)|}{|h(R)|}\,\right| \leq C \,\beta^{m}\,,$$where the constant $C>0$ only depends on the real bounds.
\end{lemma}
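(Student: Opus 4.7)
My plan is to follow the strategy of \cite[Lemma 4.13]{dFdM}, with the only new wrinkle being that once $L,R$ are transferred to a neighbourhood of a critical point, that critical point may be either $c_0$ or $c_1$, and the Key Estimate must be invoked at the appropriate one via Remark~\ref{remkeyprop}. The first step is to reduce to a neighbourhood of one critical point and apply Proposition~\ref{corokey}. Since $\ell(L),\ell(R)\geq m$, the union $L\cup R$ lies in at most two adjacent atoms of $\widehat{\mathcal{P}}_m$, which by property \eqref{itemboundgeom} are comparable; denote their union by $\Delta$. By construction there exist $i\in\{0,1\}$, an iterate $0\leq j\leq q_{m+1}+q_m$, and an interval $\Delta^{*}\subset J^f_{m-1}(c_i)$ with $\Delta=f^j(\Delta^*)$ whose $f$-iterates up to time $j$ are (essentially) disjoint fundamental domains. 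Koebe's principle, justified by Theorem~\ref{teobeau} and Lemma~\ref{negschwarz}, then gives uniformly bounded distortion for $f^j$ on $\Delta^*$ and, by the same argument, for $g^j$ on $h(\Delta^*)$, and Remark~\ref{remmeteq} ensures that these two distortion factors agree modulo $O(\mu^m)$. Setting $L'=f^{-j}(L)$, $R'=f^{-j}(R)\subset J^f_{m-1}(c_i)$, the problem reduces, modulo an additive $O(\mu^m)$ error, to bounding $\bigl||L'|/|R'|-|h(L')|/|h(R')|\bigr|$.

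Applying Proposition~\ref{corokey} (extended to $c_i$ via Remark~\ref{remkeyprop}) to the three endpoints $a',b',c'$ of $L',R'$ gives $|h(v)-v|\leq C_1\,|J^f_{m-1}(c_i)|\,\mu_1^{m-1}$. Expanding $|h(L')|-|L'|=(h(b')-b')-(h(a')-a')$, similarly for $R'$, and using $|L'|\asymp|R'|$ from the bounded geometry, a direct computation yields
\[
\left|\frac{|L'|}{|R'|}-\frac{|h(L')|}{|h(R')|}\right|\;\leq\; C\,\frac{|J^f_{m-1}(c_i)|\,\mu_1^{m-1}}{|R'|}\,.
\]
To bound the denominator, bounded distortion of $f^{-j}$ gives $|R'|\asymp|\Delta^*|\cdot|R|/|\Delta|$, and $|\Delta^*|\asymp|J^f_{m-1}(c_i)|$ by the real bounds, whence $|J^f_{m-1}(c_i)|/|R'|\lesssim|\Delta|/|R|$. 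Since $R$ is an atom of $\mathcal{Q}_n$, Proposition~\ref{gridpprop} makes it comparable to an atom of $\widehat{\mathcal{P}}_{m'}$ for some $m\leq m'\leq n$; then Lemma~\ref{corolariolemayoccoz} yields $|\Delta|/|R|\lesssim M^{n-m}(a_{m+1}\cdots a_n)^2$. Putting everything together, the target difference is bounded by
\[
C\,\mu_1^{m-1}\,M^{n-m}\,(a_{m+1}\,a_{m+2}\,\cdots\,a_n)^2\;+\;O(\mu^m)\,.
\]

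The main obstacle, and the precise reason why the full Lebesgue measure set $\mathcal{A}$ enters the statement, is to show that the factor $M^{n-m}(a_{m+1}\cdots a_n)^2$ grows only sub-exponentially in $m$, so that it can be absorbed into $\mu_1^{m/2}$ and yield the conclusion with $\beta:=\mu_1^{1/2}\in(0,1)$. Lemma~\ref{ineqlevelnm} combined with condition \eqref{cond1A} forces $n\leq (1/c_1)m$, so $n-m$ is at most proportional to $m$; conditions \eqref{cond2A} and \eqref{cond3A} are then applied carefully (splitting $[m+1,n]$ into sub-blocks on which \eqref{cond3A} produces $o(1)$ logarithmic averages, using \eqref{cond2A} to handle the tail), which gives $M^{n-m}(a_{m+1}\cdots a_n)^2=e^{o(m)}$. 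This delicate balancing of the exponential decay provided by the Key Estimate against the potentially fast growth furnished by the Yoccoz-type bound is the technical heart of the argument and the step most sensitive to the arithmetic hypotheses on $\rho$.
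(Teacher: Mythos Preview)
Your approach has a genuine gap in the final balancing step. You apply the Key Estimate at level $m-1$ and then need to absorb the factor $M^{n-m}(a_{m+1}\cdots a_n)^2$ into $\mu_1^{m}$, claiming that conditions \eqref{cond2A}--\eqref{cond3A} force this factor to be $e^{o(m)}$. This is false. Lemma \ref{ineqlevelnm} only gives $n\leq m/c_1$, so $n-m$ can be a \emph{fixed positive fraction} of $m$; thus $M^{n-m}$ alone is at least $M^{(1/c_1-1)m}$, genuinely exponential in $m$ with a base depending only on the real bounds, over which $\mu_1$ has no control. Condition \eqref{cond3A} does not rescue the $a_j$-product either: even where it applies, it yields $\sum_{m+1}^{n}\log a_j \leq (n-m)\,\omega\!\big((n-m)/m\big)$, and since $(n-m)/m$ need not tend to $0$, this bound is linear (not sublinear) in $m$. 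The crucial property $t\,\omega(t)\to 0$ as $t\to 0$ is never actually exploited in your scheme.

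The paper's argument is structurally different: it introduces a \emph{free small parameter} $\sigma>0$ and writes $m=k+p$ with $p=\lfloor\sigma k\rfloor$. The Key Estimate (Proposition \ref{corokey}) is applied at level $k$, not $m$, and Lemma \ref{corolariolemayoccoz} bridges only the $p$ levels from $k$ to $m$, producing the factor $M^{p}\exp\!\big(2p\,\omega(p/k)\big)$. Because $p/k\leq\sigma$ and $t\,\omega(t)\to 0$, one can choose $\sigma$ small enough that $\beta_1=\big(e^{2\sigma\omega(\sigma)}M^{\sigma}\mu_1\big)^{1/(2+\sigma)}<1$. The same splitting repairs a second flaw in your distortion step: merely bounded distortion of $f^j$ on $\Delta^*$ gives $|L|/|R|\asymp|L'|/|R'|$, not that their difference is $O(\mu^m)$, and Remark \ref{remmeteq} concerns renormalizations on $J_n$, not arbitrary iterates far from both critical points. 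In the paper, because $L\cup R$ sits $p$ levels deep inside $\Delta\in\widehat{\mathcal{P}}_k$, Koebe's principle together with Corollary \ref{mu3} yields distortion $1+O(\mu^p)$, which is the source of the additive error $\mu_3^{m}$ in Case~3.
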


\begin{proof}[Proof of Lemma \ref{ineqbeta}] Let us write $m=k+p$ with $p=\lfloor \sigma k \rfloor$, where $\sigma>0$ is a small constant (to be determined along the proof). Let us assume that $L \cup R$ is contained in a single atom $\Delta$ of $\widehat{\mathcal{P}}_{k}$. There are three cases to consider.
\begin{enumerate}
\item [1)] $L \cup R \subset J_k^f(c_0)$. Let $v_1,v_2,v_3$ be the endpoints of $L$ and $R$ respectively, and let $w_1,w_2,w_3$ be the endpoints of $h(L)$ and $h(R)$ respectively. By the triangle inequality
\begin{align}\label{item1lemma5.9}
\left|\,\frac{|L|}{|R|}-\frac{|h(L)|}{|h(R)|}\,\right|
\notag&=\left|\,\frac{|v_1-v_2|}{|v_2-v_3|}-\frac{|w_1-w_2|}{|w_2-w_3|}\,\right|\\
\notag&\leq\left|\,\frac{|v_1-v_2|}{|v_2-v_3|}-\frac{|w_1-w_2|}{|v_2-v_3|}\,\right|+\left|\,\frac{|w_1-w_2|}{|v_2-v_3|}-\frac{|w_1-w_2|}{|w_2-w_3|}\,\right|\\
\notag&\leq\frac{|v_1-w_1|+|v_2-w_2|}{|v_2-v_3|} + \frac{|w_1-w_2|}{|w_2-w_3|}\,\,\frac{|w_2-v_2|+|w_3-v_3|}{|v_2-v_3|}\\
&=\frac{|v_1-w_1|+|v_2-w_2|}{|R|} + \frac{\big|h(L)\big|}{\big|h(R)\big|}\,\,\frac{|w_2-v_2|+|w_3-v_3|}{|R|}\,.
\end{align}We claim that $|h(L)|/|h(R)|$ is (universally) bounded away from zero and infinity. Indeed, note first that, again by the triangle inequality$$\frac{|L|-\sum_{i=1}^{i=2}|v_i-w_i|}{|R|+\sum_{i=2}^{i=3}|v_i-w_i|}\leq\frac{\big|h(L)\big|}{\big|h(R)\big|}\leq\frac{|L|+\sum_{i=1}^{i=2}|v_i-w_i|}{|R|-\sum_{i=2}^{i=3}|v_i-w_i|}\,.$$By Proposition \ref{corokey} we have$$\frac{|L|}{|R|}\,\frac{1-2\,C_1\,\frac{|J_{k}^{f}(c_0)|}{|L|}\,\mu_1^k}{1+2\,C_1\,\frac{|J_{k}^{f}(c_0)|}{|R|}\,\mu_1^k}\leq\frac{\big|h(L)\big|}{\big|h(R)\big|}\leq\frac{|L|}{|R|}\,\frac{1+2\,C_1\,\frac{|J_{k}^{f}(c_0)|}{|L|}\,\mu_1^k}{1-2\,C_1\,\frac{|J_{k}^{f}(c_0)|}{|R|}\,\mu_1^k}\,.$$From Lemma \ref{corolariolemayoccoz} and condition \eqref{cond3A} in Definition \ref{setA} we obtain$$\frac{\big|J_k^f(c_0)\big|}{|L|} \leq M^p\,(a_{k+1}\,a_{k+2}\, \dots \, a_{k+p})^2 \leq M^p\,\exp\big(2 p \, \omega (p/k)\big)\,,$$and the same estimate replacing $L$ with $R$. Let $\beta_1=\big(e^{2\sigma\omega(\sigma)}\,M^{\sigma}\,\mu_1\big)^{1/(2+\sigma)}$,\, and note that $\beta_1\in(0,1)$ by taking $\sigma>0$ small enough. Then we have$$\frac{|L|}{|R|}\,\frac{1-2\,C_1\,\beta_1^{m}}{1+2\,C_1\,\beta_1^{m}}\leq\frac{\big|h(L)\big|}{\big|h(R)\big|}\leq\frac{|L|}{|R|}\,\frac{1+2\,C_1\,\beta_1^{m}}{1-2\,C_1\,\beta_1^{m}}\,.$$By Property \eqref{itemboundgeom} of the auxiliary partitions (Section \ref{secinter}), we finally deduce that the ratio $|h(L)|/|h(R)|$ is bounded, as claimed. With this at hand, and using again Proposition \ref{corokey}, we deduce from \eqref{item1lemma5.9} that$$\left|\,\frac{|L|}{|R|}-\frac{|h(L)|}{|h(R)|}\,\right| \leq \frac{C_2\,\big|J_k^f(c_0)\big|\,\mu_{1}^{k}}{|R|}\,,$$and then$$\left|\,\frac{|L|}{|R|}-\frac{|h(L)|}{|h(R)|}\,\right| \leq C_2\,\beta_1^m\,.$$
\item [2)] $L \cup R \subset J_k^f(c_1)$. As explained right after its proof (see Remark \ref{remkeyprop}), Proposition \ref{corokey} holds in $J_n^{f}(c_1)$. Then we proceed just as in the previous case, using also Lemma \ref{corolariolemayoccoz} (note here that $|\Delta|\asymp|J_k^f(c_1)|$, as it follows from Corollary \ref{corointersect}) and Property \eqref{itemboundgeom} of the auxiliary partitions in the same way.
\item [3)] $L \cup R$ is not contained in $J_k^f(c_0) \cup J_k^f(c_1)$. Let $\Delta^{*}$ be the union of $\Delta$ with its left and right neighbours in the auxiliary partition $\widehat{\mathcal{P}}_{k}$. Let $j<q_{k+1}$ be such that $f^{j}|_{\Delta^*}$ is a diffeomorphism with $f^{j}(\Delta)\subset J_k^f(c_i)$, either for $i=0$ or $i=1$. By the previous cases
\begin{equation}\label{ineqitem1}
\left|\,\frac{|f^j(L)|}{|f^j(R)|}-\frac{|h(f^j(L))|}{|h(f^j(R))|}\,\right| \leq C_2\,\beta_1^m\,.
\end{equation}By Koebe's principle combined with Corollary \ref{mu3}, we deduce in the standard way that the distortion of $f^{j}|_{L \cup R}$ is bounded by $e^{C_3\,\mu^p}$. Therefore, defining $\mu_3=\mu^{\sigma/(2+\sigma)}\in(0,1)$, we obtain
\begin{equation}\label{ineq1lemma3}
\left|\,\frac{\big|f^{j}(L)\big|}{\big|f^{j}(R)\big|} - \frac{|L|}{|R|}\,\right| \leq C_4\,\mu^{p} \leq C_4\,\mu_3^{m}\,.
\end{equation}In the same way, but replacing $f$ by $g$, we obtain
\begin{equation}\label{ineq2lemma3}
\left|\,\frac{\big|g^{j}(h(L))\big|}{\big|g^{j}(h(R))\big|}-\frac{|h(L)|}{|h(R)|}\,\right| \leq C_5\,\mu_3^{m}\,.
\end{equation}By putting together \eqref{ineqitem1}, \eqref{ineq1lemma3} and \eqref{ineq2lemma3}, we finally obtain
\begin{equation*}
\left|\,\frac{\big|h(L)\big|}{\big|h(R)\big|}-\frac{|L|}{|R|}\,\right| \leq C_6\,\beta_2^{m},
\end{equation*}where $C_6=C_2+C_4+C_5$ and $\beta_2= \max\{\mu_{3},\beta_1\}$.
\end{enumerate}
\end{proof}

\subsection*{Proof of Theorem B} Let $\{\mathcal{Q}_n\}_{n\in\nt}$ be the fine grid constructed in Section \ref{sec:anewpartition}, and let $h$ be the topological conjugacy considered in Theorem \ref{theoremb}. Let $c_1\in(0,1)$ be given by Lemma \ref{ineqlevelnm}, and let $C>0$ and $\beta\in(0,1)$ be given by Lemma \ref{ineqbeta}. Then we just apply Proposition \ref{criterion}, with $C$ and $\lambda=\beta^{c_1}$, to deduce that the conjugacy $h$ is a $C^{1+\alpha}$ diffeomorphism.

\medskip
	
\begin{remark}\label{renbimulti} As mentioned in the introduction, the statement of Theorem \ref{maintheorem} is most likely true for multicritical circle maps with any number of critical points, and we believe that it should be possible to adapt the proof of Theorem \ref{theoremb}, developed in Sections \ref{sec:anewpartition} and \ref{SecProofThmB} above, to the multicritical case. To be more precise, let $f$ be a $C^3$ circle homeomorphism with $N \geq 2$ critical points $\{c_0,c_1,\dots,c_{N-1}\}$ (all of them being non-flat) and with irrational rotation number $\rho\in\mathcal{A}$ (recall Definition \ref{setA}). For any given $n\in\nt$, let $N_n\in\{0,\dots,N-1\}$ be the number of critical points of $f^{q_{n+1}}$ that belong to $I_{n}(c_0)\setminus I_{n+2}(c_0)$. Consider the (ordered) set
\[
\big\{\,1\,\leq\,j_1\,<\,\dots\,<\,j_{N_n}\,\leq\,a_{n+1}\,\big\},
\]
where for each $i\in\{1,\dots,N_n\}$ the index $j_i$ is defined by the condition that the fundamental domain$$\Delta_{j_i}=f^{(j_i-1)q_{n+1}+q_n}\big(I_{n+1}(c_0)\big)$$contains a critical point, say $\mathfrak{c}_{n}(i)$, of $f^{q_{n+1}}$. To build auxiliary partitions (recall Section \ref{secinter}), let
\[
\widehat{\Delta}_n(0)=\Delta_1=f^{q_n}\big(I_{n+1}(c_0)\big),\quad\widehat{\Delta}_n(N_n+1)=\Delta_{a_{n+1}}=\big[f^{q_{n+2}}(c_0),f^{(a_{n+1}-1)q_{n+1}+q_n}(c_0)\big]
\]
and consider for each $i\in\{1,\dots,N_n\}$ the fundamental domain
\[
\widehat{\Delta}_n(i)=\big[f^{q_{n+1}}(\mathfrak{c}_{n}(i)),\mathfrak{c}_{n}(i)\big]\,.
\]
Just as we did in Section \ref{secbuildaux}, spread each of these intervals under $f^{q_{n+1}}$, both forward and backwards, until it meets the corresponding iterates of the next and the previous one, $\widehat{\Delta}_n(i+1)$ and $\widehat{\Delta}_n(i-1)$, respectively. With this at hand, it should be possible to build balanced decompositions of bridges and fine grids from the auxiliary partitions, adapting the construction developed in Section \ref{sec:constructingfinegrid}. Moreover, it is reasonable to expect that all geometric estimates of both sections \ref{sec:anewpartition} and \ref{SecProofThmB}, that rely heavily on the real bounds (Theorem \ref{teobeau}), Koebe's distortion principle (see Remark \ref{obspontekoebe}) and Yoccoz's lemma \ref{lemyoccoz}, hold in the same way as for the bicritical case. Note, finally, that the criterion for smoothness given by Proposition \ref{criterion} is quite general, thus independent of the number of critical points of the circle maps referred in the statement of Theorem \ref{theoremb}.
\end{remark}

\end{document}